\documentclass[smallextended]{svjour3}

\setcounter{tocdepth}{2}
\smartqed
\usepackage{graphicx}
%
%

\usepackage{amsmath}
\usepackage{amsfonts}
\usepackage{amssymb}
\usepackage{overpic}

\usepackage[pdftex,%
            colorlinks,linkcolor=red,citecolor=blue,urlcolor=blue,%
            pdfstartview=FitH,%
            pdfauthor={Mikhail Skopenkov, Helmut Pottmann, Philipp Grohs},%
            pdftitle={Ruled Laguerre Minimal Surfaces},%
            pdfkeywords={Laguerre geometry, Laguerre minimal ruled surface,%
            biharmonic function.}%
           ]{hyperref}
\usepackage{hypernat}
\usepackage{anysize}
\usepackage{url}

\usepackage{setspace}
\usepackage{longtable}

\usepackage{pgf,tikz}
\usetikzlibrary{arrows}

\numberwithin{equation}{section}




\hyphenation{In-sti-tut Dis-kre-te Ma-the-ma-tik Ge-o-met-rie}
\hyphenation{Tech-ni-sche Wien Wied-ner}
\hyphenation{Sprin-ger La-guerre Bo-ben-ko Chri-sto-fel Bo-be-n-ko Cau-chy}
\hyphenation{Wall-ner Pott-mann Com-bes-cure}

\journalname{\mscomm{This is an updated version of the paper published in Math. Z. 272:1--2 (2012) 645--674. \hspace{4cm}}} 

\newcommand{\mscomm}[1]{#1}

\begin{document}

\title{Ruled Laguerre minimal surfaces
}



\author{Mikhail Skopenkov         \and
        Helmut Pottmann           \and
        Philipp Grohs
}

\authorrunning{M. Skopenkov, H. Pottmann, Ph. Grohs} 

\institute{M. Skopenkov \at
              Institute for information transmission problems of the Russian Academy of Sciences,
              Bolshoy Karetny per. 19, bld. 1, Moscow, 127994, Russian Federation \\
              \email{skopenkov@rambler.ru}             \\
              \emph{Present address:} King Abdullah University of Science and Technology,
              P.O. Box 2187, 4700 Thuwal, 23955-6900, Kingdom of Saudi Arabia
           \and
           H. Pottmann \at
              King Abdullah University of Science and Technology, 4700 Thuwal, 23955-6900, Kingdom of Saudi Arabia
              \\
              \email{helmut.pottmann@kaust.edu.sa}
           \and
           Ph. Grohs \at
              Seminar for Applied Mathematics, ETH Zentrum, R\"amistrasse 101, 8092 Zurich, Switzerland\\
              \email{pgrohs@sam.math.ethz.ch}
}

\date{}

\maketitle

\begin{abstract}
A \emph{Laguerre minimal surface} is an immersed surface in $\mathbb{R}^3$ being an
extremal of the functional $\int (H^2/K-1)dA$. In the present paper, we prove that
the only {\em ruled} Laguerre minimal surfaces are up to isometry
the surfaces $\mathbf{R}(\varphi,\lambda) = ( A\varphi,\, B\varphi,\, C\varphi +D\cos 2\varphi\, )
+ \lambda\left(\sin \varphi,\, \cos \varphi ,\, 0\,\right)$,
where $A,B,C,D\in\mathbb{R}$ are fixed.
To achieve invariance under Laguerre transformations, we also derive
all Laguerre minimal surfaces that are enveloped by a family of cones.
The methodology is based on the isotropic model of Laguerre geometry.
In this model a Laguerre minimal surface enveloped by a family of cones corresponds to
a graph of a biharmonic function carrying a family of isotropic circles.
We classify such functions by showing that the top view of the family of
circles is a pencil.

\keywords{Laguerre geometry \and Laguerre minimal surface \and ruled surface
\and biharmonic function}
 \subclass{53A40 \and 49Q10 \and 31A30}
\end{abstract}

\tableofcontents

\begin{figure}[htbp]
\centering
\includegraphics[width=0.9\textwidth]{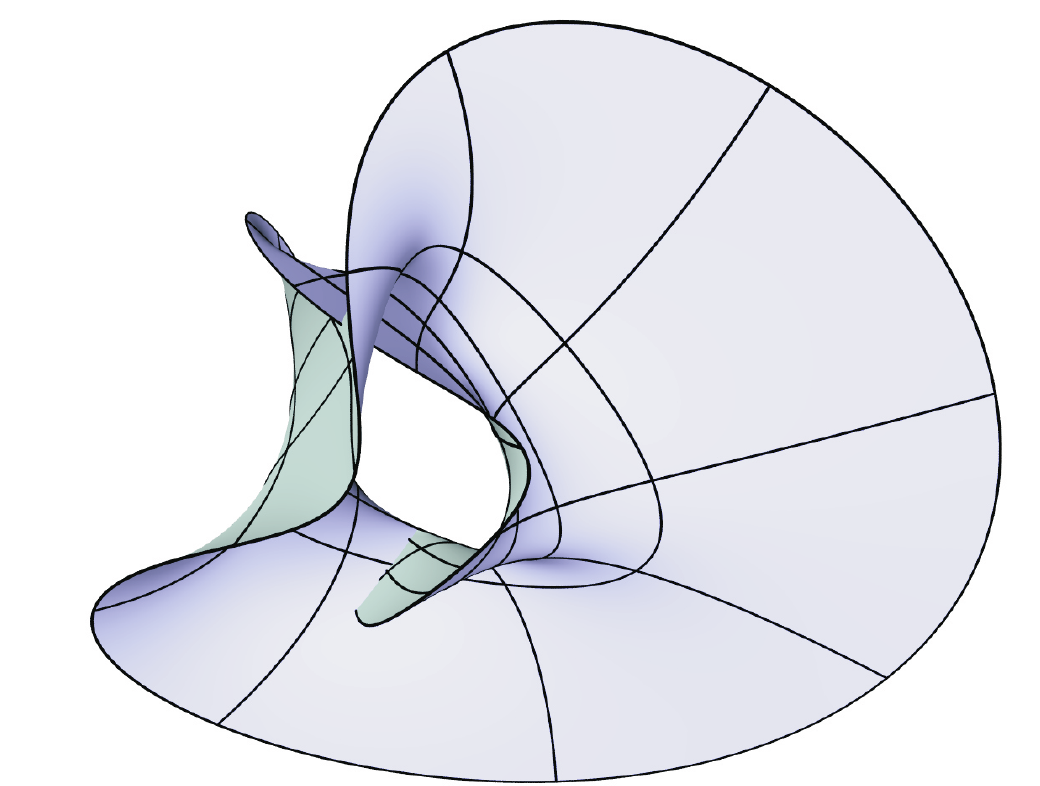}
 \caption{A general L-minimal surface enveloped by a hyperbolic family of cones. For details refer to Definition~\ref{def-hyperbolic} and
 Corollary~\ref{classification-hyperbolic}.}
 \label{figure-hyperbolic-general}
\end{figure}

\section{Introduction}\label{intro}

This is the third in a series of papers \cite{PGM:2009,PGB:2010} where
we develop and study a novel approach to the Laguerre differential geometry of
immersed Legendre surfaces in $\mathbb{R}^3$.
Laguerre geometry is the Euclidean geometry of oriented planes and spheres. Besides M\"obius and Lie geometry, it
is one member of the three classical sphere geometries in $\mathbb{R}^3$ \cite{cecil:1992}.

After the seminal work \cite{blaschke:1929} of Blaschke on this topic in the beginning of the 20th century, this classical topic has again found the interest of differential geometers.

For instance, the celebrated work on discrete differential geometry by Bobenko
and coworkers \cite{bobenko-2005-ddg,bobenko:org,bobenko-2006-ms} heavily uses this
theory in developing discrete counterparts to continuous definitions.

On the practical side, recent research in architectural geometry identified
certain classes of polyhedral surfaces,
namely conical meshes \cite{liu-2006-cm,pottmann-tr163} and meshes
with edge offsets \cite{pottmannetal:2007}, as particularly suitable
for the representation and fabrication of architectural freeform structures. These types of
polyhedral surfaces are actually objects of
Laguerre sphere geometry \cite{bobenko:org,liu-2006-cm,pottmannetal:2007,pottmannliu:2007,flex-min,PGB:2010}.

The aim is to study (discrete, see \cite{PGB:2010}, and continuous, see \cite{PGM:2009})
minimizers of geometric energies which are invariant under Laguerre
transformations. The simplest energy of this type has been
introduced by Blaschke
\cite{blaschke:1924,blaschke:1925,blaschke:1929}. Using mean
curvature $H$, Gaussian curvature $K$, and the surface area element
$dA$ of an immersion $\mathbf{r}:\mathbb{R}^2\to \mathbb{R}^3$, it can be expressed as the surface integral
\begin{equation}
 \Omega=\int ({H^2-K) / K}\, dA. \label{Lfunctional}
\end{equation}
Though the quantities $H,K,A$ used for the definition are not
objects of Laguerre geometry, the functional $\Omega$
invariant under Laguerre transformations.
An immersion $\mathbf{r}:\mathbb{R}^2\to \mathbb{R}^3$ with $K\ne0$, which is an extremal of the energy $\Omega$ with respect to compactly supported variations, is called a {\em Laguerre-minimal \textup{(}L-minimal\textup{)} surface}.
A Euclidean minimal surface (\mscomm{outside planar points}) is a particular case of such a surface.

In 1842 
Catalan proved that the only ruled Euclidean minimal surfaces are the plane and the helicoid. A surface is \emph{ruled}, if each point of the surface belongs to a line segment contained in the surface. One of the main purposes of this paper is to describe all ruled Laguerre minimal surfaces.

The property of a surface to be ruled is not invariant under Laguerre transformations.
A line in a surface may be taken to a cone or cylinder of revolution touching the image of the surface along a curve.
Hence, we will also derive all Laguerre minimal surfaces which are \emph{enveloped by a family of cones}; see Figure~\ref{figure-hyperbolic-general}.
In the following, when speaking of a \emph{cone}, we will always assume this to be a cone of revolution, including the special
cases of a rotational cylinder and a line.
\mscomm{We say that a surface is \mscomm{\emph{enveloped by a $1$-parametric family $\mathcal{F}$ of cones},} if through each point of the surface there passes a cone from $\mathcal{F}$ which is tangent to the surface along a continuous curve (containing the point).}

Our approach is based on a recent result \cite{PGM:2009} which shows that Laguerre minimal surfaces
appear as graphs of biharmonic functions in the isotropic model of Laguerre geometry.
This result has various corollaries on Laguerre minimal surfaces, geometric optics and linear elasticity.

A Laguerre minimal surface
enveloped by a family of cones corresponds
to a graph of a biharmonic function carrying a family
of isotropic circles. We classify such
functions. In particular we show that besides a few exceptions the top view of such a family of isotropic circles must be a pencil.
In the course of the proof of this result we also develop a new
symmetry principle for biharmonic functions.

\subsection{Previous work}

\noindent Differential geometry
in the three classical sphere geometries of M\"o\-bi\-us, Laguerre and
Lie, respectively, is the subject of Blaschke's third volume on
differential geometry \cite{blaschke:1929}. For a more modern
treatment we refer to Cecil \cite{cecil:1992}.
Here we focus on contributions to L-minimal surfaces.
Many L-minimal surfaces are found in the work of Blaschke
\cite{blaschke:1924,blaschke:1925,blaschke:1929} and in papers by
his student K\"onig \cite{koenig:1926,koenig:1928}.

Recently, this topic found again the interest of differential geometers, after Pinkall used Lie sphere geometry
to classify Dupin hypersurfaces in $\mathbb{R}^4$ \cite{pinkall:1985}.
The stability of L-minimal surfaces has been
analyzed by Palmer \cite{palmer:1999}; he also showed that these surfaces are indeed local minimizers of (\ref{Lfunctional}).
Li and Wang studied L-minimal surfaces using
the Laguerre Gauss map and a Weierstrass representation \cite{springerlink:10.1007/s10114-005-0642-1,springerlink:10.1007/s00025-008-0314-4}, cf.~also \cite{dorfmeister:2007,gollek}. Musso and Nicolodi studied L-minimal surfaces by the method of moving frames
\cite{musso:1996}. L-minimal surfaces which are envelopes of a family of cones include as special cases the L-minimal canal surfaces described by Musso and Nicolodi 
\cite{musso:1995}.
The description of all \emph{ruled} L-minimal surfaces is not directly accessible by the methods known before.

\subsection{Contributions}

Our main result is a description of all the
L-minimal surfaces which are envelopes of an analytic family $\mathcal F$ of cones of revolution.
We show that for any such surface (besides a sphere and a parabolic cyclide) the family $\mathcal F$
belongs to one of three simple types;
see Definitions~\ref{def-elliptic}, \ref{def-hyperbolic}, \ref{def-parabolic} and Corollary~\ref{cor-pencil}.
For each type we represent the surface as a convolution of certain basic surfaces; see Examples~\ref{ex1}--\ref{ex11} and Corollaries~\ref{classification-elliptic},~ \ref{classification-hyperbolic},~\ref{classification-parabolic}.

As an application we show the following:

\begin{theorem}\label{th-ruled-intro}\label{th-kineruled}
	A ruled Laguerre minimal surface is up to isometry a piece of the surface
	\begin{equation}\label{eq-ruled-intro}
		\mathbf{R}(\varphi,\lambda) =
                \left( A\varphi,\, B\varphi,\, C\varphi +D\cos 2\varphi\, \right)
		+ \lambda\left(\sin \varphi,\, \cos \varphi ,\, 0\,\right),
	\end{equation}
for some $A,B,C,D\in\mathbb{R}$ such that $C^2+D^2\ne0$.
\end{theorem}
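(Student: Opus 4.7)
My plan is to derive Theorem~\ref{th-ruled-intro} as a direct consequence of the paper's classification of L-minimal surfaces enveloped by an analytic family of cones (Corollary~\ref{cor-pencil} together with Corollaries~\ref{classification-elliptic}--\ref{classification-parabolic}). The key observation that bridges the two settings is that a straight line in $\mathbb{R}^3$ is a degenerate cone of revolution---one whose opening angle has collapsed to zero---so the one-parameter family of rulings of a ruled L-minimal surface is automatically an analytic family of (degenerate) cones enveloping the surface.

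First I would dispose of the exceptional surfaces singled out in the classification. A sphere contains no straight lines, so a ruled L-minimal surface is never a sphere; a parabolic cyclide can be checked by direct inspection to either be non-ruled or else to fit into the claimed family~\eqref{eq-ruled-intro}. Outside these exceptions Corollary~\ref{cor-pencil} applies and forces the family of rulings to be of elliptic, hyperbolic, or parabolic type in the sense of Definitions~\ref{def-elliptic}--\ref{def-parabolic}.

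Next I would handle each of the three types in turn, imposing the degeneracy condition that every cone in the family actually reduce to a line. In the isotropic model this translates into the requirement that the isotropic circles produced by the classification collapse to isotropic lines of a compatible type, a severe restriction on the pencil in the top view. Working through the explicit parametrizations supplied by Corollaries~\ref{classification-elliptic}--\ref{classification-parabolic}, in each case I would either exhibit the resulting ruled surface or show that no such degeneration is possible. Applying an isometry of $\mathbb{R}^3$ to bring the surviving surfaces into canonical position should match them against~\eqref{eq-ruled-intro}, with $A,B,C,D$ arising as the free parameters of the admissible biharmonic functions and the non-degeneracy condition $C^2+D^2\neq 0$ excluding only the trivial planar case.

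The principal obstacle I foresee is bookkeeping: translating between the parametrization produced in the isotropic model and the explicit Euclidean form~\eqref{eq-ruled-intro}, and identifying the isometry that puts a general ruled L-minimal surface into the canonical position adopted in the theorem. A secondary difficulty is a rigorous treatment of the degeneration of cones to lines within the framework of Corollary~\ref{cor-pencil}; this should follow by taking limits inside the analytic family, but must be verified explicitly to be sure that no ruled solutions fall outside the scope of the classification.
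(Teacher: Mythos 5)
Your overall strategy --- regard the rulings as degenerate cones, invoke the Pencil Theorem through Corollary~\ref{cor-pencil}, and then impose the degeneracy condition on the resulting classification --- is in spirit the route the paper takes. But the central step of your plan has a genuine gap: Corollaries~\ref{classification-elliptic}--\ref{classification-parabolic} classify the surfaces only up to \emph{Laguerre equivalence}, whereas the theorem is an \emph{isometry} classification, and ruledness is not a Laguerre invariant (a Laguerre transformation can turn a ruling into a genuine cone; compare the helicoid $\mathbf{r}_1$ with its image $\tilde{\mathbf{r}}_1$). Hence imposing ``every cone is a line'' on the canonical representative produced by those corollaries constrains nothing about the original surface, and a ruled surface need not have a ruled canonical representative; you would instead be forced to analyse the entire Laguerre orbit of the five-parameter family, which is a much harder problem. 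The paper avoids this by not using Corollary~\ref{classification-elliptic} at all in the ruled case: it first proves (Corollary~\ref{cor-catalan}) that the rulings are parallel to one plane, normalizes \emph{only by isometries} so that the top view of the corresponding i-M-circles is the pencil of lines $y=tx$, and then applies the biharmonic normal form of Proposition~\ref{auxillary} directly (this is Theorem~\ref{th-ruled}), finishing with Proposition~\ref{rulings} to pass to the kinematic form~(\ref{eq-ruled-intro}).

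A second, related error is your description of the degeneracy condition in the isotropic model. A line of $\mathbb{R}^3$ does not correspond to an i-M-circle that ``collapses to an isotropic line''; it corresponds to an i-M-circle of the special shape~(\ref{line}), which is generically still an i-circle of parabolic or elliptic type. The usable form of the condition is that the restriction of $F$ to the line $y=tx$ equals $m_3(t)(x^2+y^2-1)-m_1(t)x-m_2(t)y$, i.e.\ the constant term is tied to the leading coefficient; it is exactly this constraint that forces $a_1+a_3=b_1+c_1=b_2+c_2=b_3+c_3=0$ in~(\ref{eq-auxillary}) and eliminates the non-ruled blocks $\tilde{\mathbf{r}}_1$, $\tilde{\mathbf{r}}_3$. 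Without identifying this constraint you cannot reach~(\ref{eq-ruled-intro}). Two smaller omissions: the analyticity of the family of rulings must be justified (the paper does this via asymptotic directions in the proof of Corollary~\ref{cor-catalan}), and the exceptional branch of Corollary~\ref{cor-pencil} is not vacuous for ruled surfaces --- the Pl\"ucker conoid carries a two-parameter family of i-M-circles yet is ruled --- so it must be matched explicitly against~(\ref{eq-ruled-intro}) (it is the case $A=B=C=0$), not merely dismissed.
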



In other words, a ruled L-minimal surface can be constructed
as a superposition of a frequency $1$ rotating motion
of a line in a plane, a frequency $2$ ``harmonic oscillation'' between two lines
parallel to the plane, and a constant-speed translation. Equivalently, a ruled L-minimal surface is a convolution of a helicoid ($A=B=D=0$), the Pl\"ucker conoid ($A=B=C=0$), and a cycloid (limit case $C,D\to 0$); see Examples~\ref{ex1}--\ref{ex3} and Theorem~\ref{th-ruled} for accurate statement.

One more result of the paper is a description of all the 
i-Willmore surfaces carrying an analytic family of i-circles; see Table~\ref{isotropic} for definitions, and Corollary~\ref{th-pencil}, Theorems~\ref{i-classification-elliptic}, \ref{i-classification-hyperbolic}, \ref{i-classification-parabolic} for the statements.


\subsection{Organization of the paper}

In \S\ref{sec-isotropic} we give an introduction to isotropic and Laguerre geometries and translate
the investigated problem to the language of isotropic geometry. This section does not contain new results. In \S\ref{pencil} we state and prove the Pencil Theorem~\ref{th1}, which describes the possible families $\mathcal{F}$ of cones. In \S\ref{classification} we describe the Laguerre minimal surfaces for each type of cone family $\mathcal{F}$ and prove Theorem~\ref{th-ruled-intro}.

\section{Isotropic model of Laguerre geometry}\label{sec-isotropic}

\subsection{Isotropic geometry}\label{ssec:isotropic}

 {\em Isotropic geometry} has been systematically developed
by Strubecker
\cite{strubecker:1941,strubecker:1942a,strubecker:1942} in the
1940s; a good overview of the many results is provided in
the monograph by Sachs \cite{Sachs:1990}.

The \emph{isotropic space} is the affine space $\mathbb{R}^3$ 
equipped with the norm $\|(x,y,z)\|_i:=\sqrt{x^2+y^2}$.
The invariants of affine transformations preserving this norm are subject of \emph{isotropic geometry}.
\noindent

The projection $(x,y,z) \mapsto (x,y,0)$
of isotropic space onto the $xy$-plane is called \emph{top view}.
Basic objects of isotropic geometry and their definitions (from the point of view of Euclidean geometry in isotropic space) are given in the first two columns of Table~\ref{isotropic}; see also Figure~\ref{fig:isotropic_illustration}. We return to the third column of the table further.

\begin{table}[htbp]
\caption{Basic objects of isotropic geometry as images of surfaces in the isotropic model of Laguerre geometry.}
\label{isotropic}
\begin{tabular}{|p{4.0cm}|p{7.0cm}|p{3.5cm}|}
\hline
Object\newline of isotropic geometry & Definition                 & Corresponding surface\newline
in Laguerre geometry \\
\hline
point                        & point in isotropic space           & oriented plane \\
non-isotropic line           & line nonparallel to the $z$-axis  & cone \\
non-isotropic plane          & plane nonparallel to the $z$-axis & oriented sphere
\\
i-circle of elliptic type    & ellipse whose top view is a circle  & cone \\
i-circle of parabolic type   & parabola with $z$-parallel axis     & cone \\
i-sphere of parabolic type   & paraboloid of revolution with \ $z$-parallel axis & oriented sphere \\
i-paraboloid                 & graph of a quadratic function \ $z=F(x,y)$ & parabolic cyclide
\newline \emph{or} oriented sphere \\
i-Willmore surface          & graph of a (multi-valued) biharmonic function $z=F(x,y)$ & L-minimal surface \\
\hline
\end{tabular}
\end{table}

\begin{figure}[htbp]
\centering
\begin{overpic}[width=\textwidth]{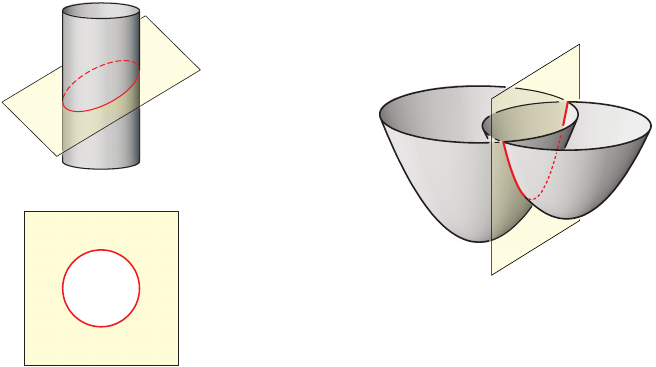}
\put(24,45){$P$} \put(24,20){$P$} \put(11,49){$\mathcal{S}$}
\put(86,44.5){$P$} \put(64,30){$\mathcal{S}_1$}
\put(91,30){$\mathcal{S}_2$}
\end{overpic}
 \caption{(Left) An i-circle of elliptic type
 is the intersection curve of a vertical round cylinder $\mathcal{S}$
 and a non-isotropic plane $P$. When viewed from the top, the i-circle
 is a Euclidean circle. (Right) An i-circle of parabolic type is a parabola with $z$-parallel
 axis. This curve appears as the intersection curve of two i-spheres, $\mathcal{S}_1$ and $\mathcal{S}_2$,
 of parabolic type with the same i-mean curvature.
 For more details refer to Table~\ref{isotropic} and \S\ref{ssec:isotropic}.}
 \label{fig:isotropic_illustration}
\end{figure}

In isotropic space there exists a counterpart to M\"obius geometry.
One puts i-spheres of
parabolic type and non-isotropic planes into the same class 
of {\em isotropic M\"obius spheres} (\emph{i-M-spheres}); they are given by
the equation $z=\frac{a}{2}(x^2+y^2)+bx+cy+d$ for some $a,b,c,d\in\mathbb{R}$.
The coefficient $a$ in this representation is called the \emph{i-mean curvature} of the i-M-sphere.
An intersection curve of two i-M-spheres is called an {\em i-M-circle}; it may be an i-circle of elliptic or
parabolic type or a non-isotropic straight line.


Similarly to Euclidean M\"obius geometry, where an ideal point is added to $\mathbb{R}^3$, in isotropic M\"obius geometry an \emph{ideal line} $\ell_\infty$ is added to $\mathbb{R}^3$. The resulting space $\mathbb{R}^3\cup\ell_\infty$ is called \emph{extended isotropic space}.
By definition, an i-M-sphere with i-mean curvature $a$ intersects the ideal line at the point $a\in \ell_\infty$.

A map acting on $\mathbb{R}^3\cup \ell_\infty$ is called an {\em isotropic M\"obius \textup{(}i-M\textup{)} transformation}, if it takes i-M-spheres to i-M-spheres (and hence i-M-circles to i-M-circles). The top view of an i-M-transformation is a planar Euclidean M\"obius transformation. Basic i-M-transformations which together with the translation $(x,y,z)\mapsto (x+1,y,z)$ generate the whole group of i-M-transformations are given in the first column of Table~\ref{isotropic-trans}. Here $R^{\theta}$ is the counterclockwise rotation through an angle $\theta$ around the $z$-axis.



\begin{table}[htbp]
\caption{Basic isotropic M\"obius transformations as images of Laguerre transformations in the isotropic model of Laguerre geometry.}
\label{isotropic-trans}
\begin{tabular}{|l|l|}
\hline
i-M-transformation                  & Corresponding L-transformation \\
\hline
$(x,y,z)\mapsto R^\theta (x,y,z)$  & rotation $R^\theta$ \\
$(x,y,z)\mapsto (x,y,z+ax+by)$     & translation by vector $(a,b,0)$ \\
$(x,y,z)\mapsto (x,y,z+x^2+y^2-1)$ & translation  by vector $(0,0,1)$ \\
$(x,y,z)\mapsto (x,y,z+h)$         & $h$-offset operation \\
$(x,y,z)\mapsto (x,y,az)$          & homothety with coefficient $a$ \\
$(x,y,z)\mapsto (x,y,z)/(x^2+y^2)$ & reflection with respect to the plane $z=0$ \\
$(x,y,z)\mapsto (x,y,z)/\sqrt{2}$  & transformation $\Lambda$\\
\hline
\end{tabular}
\end{table}

\subsection{Laguerre Geometry} \label{ssec:laguerre}

A \emph{contact element} is a pair $(r,P)$,  where $r$ is a point in $\mathbb{R}^3$,
and $P$ is an oriented plane passing through the point $r$.
Denote by $ST\mathbb{R}^3$ 
the space of all contact elements.

To an immersed oriented surface $\Phi$ in $\mathbb{R}^3$ assign the set of all the contact elements $(r,P)\in ST\mathbb{R}^3$ such that  
$r\in\Phi$ and $P$ is the oriented tangent plane to $\Phi$ at the point $r$.
We get a \emph{Legendre surface}, i.~e., the image of an immersion $(\mathbf{r},\mathbf{P}):\mathbb{R}^2\to ST\mathbb{R}^3$ such that $d\mathbf{r}(u,v)\parallel\mathbf{P}(u,v)$.
Further we do not distinguish between an immersed surface in $\mathbb{R}^3$ and the corresponding Legendre surface, if no confusion arises.

An example of a Legendre surface not obtained from an immersed one is a \emph{point}, or a \emph{sphere of radius $0$}, which is the set of all the contact elements $(r,P)$ such that $r=r_0$ is fixed and $P\ni r_0$ is arbitrary.

A \emph{Laguerre transformation \textup{(}L-transformation\textup{)}} is a bijective map $ST\mathbb{R}^3\to ST\mathbb{R}^3$ taking oriented planes to oriented planes and oriented spheres (possibly of radius $0$) to oriented spheres (possibly of radius $0$).
The invariants of Laguerre transformations are the subject of \emph{Laguerre geometry}
\cite{blaschke:1929,cecil:1992}.

Note that an L-transformation does not in general preserve points,
since those are seen as spheres of radius $0$ and may be mapped to other
spheres. A simple example of an L-transformation is the \emph{$h$-offset operation},
translating a contact element $(r,P)$ by the vector $h\mathbf{n}$, where $\mathbf{n}$
is the positive unit normal vector to the oriented plane $P$.

A Laguerre transformation is uniquely defined by its action on the set of oriented planes. A \emph{Hesse normal form} of an oriented plane $P$ is the equation $n_1x+n_2y+n_3z+h=0$ of the plane such that $(n_1,n_2,n_3)$ is the positive unit normal vector to the oriented plane.

Consider the Laguerre transformation $\Lambda$ taking an oriented plane in the Hesse normal form $n_1x+n_2y+n_3z+h=0$ to the oriented plane $n_1x+n_2y+\frac{1}{2}(3n_3+1)z+h=0$ with obvious orientation. Denote by $\tilde{\mathbf{r}}(u,v)$ the surface obtained from a surface $\mathbf{r}(u,v)$ by the transformation and parametrized so that the tangent planes to the surfaces $\tilde{\mathbf{r}}(u,v)$ and $\mathbf{r}(u,v)$ are parallel at points having the same parameters $u$ and $v$. This notation is convenient in our classification results which follow.

Similarly, denote by ${\mathbf{r}}^\theta(u,v)$ the surface obtained from a surface $\mathbf{r}(u,v)$ by the rotation $R^\theta$ and parametrized so that the tangent planes to the surfaces ${\mathbf{r}}^\theta(u,v)$ and $\mathbf{r}(u,v)$ are parallel for each $(u,v)\in\mathbb{R}^2$.

More examples of Laguerre transformations are given in the second column of Table~\ref{isotropic-trans}.

For a pair of parallel 
oriented planes $P_1$ and $P_2$ in Hesse normal forms $n_1x+n_2y+n_3z+h_1=0$ and $n_1x+n_2y+n_3z+h_2=0$, denote by $a_1P_1\oplus a_2P_2$ the plane in Hesse normal form $n_1x+n_2y+n_3z+a_1h_1+a_2h_2=0$.
Define a \emph{convolution surface} $a_1\Phi_1\oplus a_2\Phi_2$
of two Legendre surfaces $\Phi_1$ and $\Phi_2$ to be the Legendre surface formed by the contact elements of the form $(a_1r_1+a_2r_2,a_1P_1\oplus a_2P_2)$, where
$(r_1,P_1)\in\Phi_1$ and $(r_2,P_2)\in\Phi_2$ run through all the pairs of contact elements with $P_1$ parallel to~$P_2$.
In particular, if the surfaces $\Phi_1$ and $\Phi_2$ enclose convex bodies then the convolution surface $\Phi_1\oplus\Phi_2$ encloses the Minkowsky sum of the bodies.

\subsection{Isotropic model of Laguerre geometry}
To each oriented plane in the Hesse normal form $n_1x+n_2y+n_3z+h=0$,
with 
$n_3\ne-1$ assign the point
\begin{equation} \frac{1}{n_3+1}(n_1,n_2,h) \label{Pi} \end{equation}
of the isotropic space.
To an oriented plane in the Hesse normal form $-z+h=0$ assign
the ideal point $h\in\ell_\infty$.
This induces a map from 
the space $ST\mathbb{R}^3$ to the extended isotropic space $\mathbb{R}^3\cup\ell_\infty$.
The map provides the {\em isotropic model of Laguerre geometry}.
For a more geometric definition see~\cite{pottmann-1998-alcagd}.

\mscomm{To map an oriented (Legendre) surface $\Phi$ to the isotropic model, we consider the set $\Phi^i$ of points $P^i$, where $P$ runs through all oriented tangent planes to $\Phi$.}


As examples, consider the pairs of surfaces $\Phi^i$ and $\Phi$ given in the last two columns of Table~\ref{isotropic}.

An oriented sphere with center $(m_1,m_2,m_3)$, radius $R$, \mscomm{and inwards oriented normals} is mapped to the isotropic M\"obius sphere
\begin{equation}
  z={R+m_3 \over 2}(x^2+y^2)-m_1x-m_2y+{R-m_3 \over 2} \label{iso-sphere}.
\end{equation}
%

A cone viewed as the common tangent planes of two oriented spheres is mapped
to the common points of two i-M-spheres (= i-M-circle) in the isotropic model.
In particular, a line is mapped to an i-M-circle of the form
\begin{equation}\label{line}
\begin{cases}
z=m_3(x^2+y^2-1)-m_1x-m_2y,\\
z=n_3(x^2+y^2-1)-n_1x-n_2y.
\end{cases}
\end{equation}

\mscomm{The top view of $\Phi^i$ is actually the stereographic projection of the Gaussian spherical image of $\Phi$ from the point $(0,0,-1)$ to the $xy$-plane. In particular, if the Gaussian curvature $K\ne 0$ everywhere, then $\Phi^i$ is locally a graph of a function.}

\mscomm{Notice that the definition of Laguerre minimal surfaces is local and requires $K\ne 0$. Restricting to a part of the surface (if needed), in what follows we assume the following.}

\smallskip \noindent
\mscomm{{\bf Condition (*)} $\Phi$ is an oriented \emph{embedded} surface (i.e. the image of a proper injective $C^2$ map of an open disk to $\mathbb{R}^3$ with nondegenerate differential at each point, equipped with an oriented unit normal continuously depending on the point) with \emph{nowhere vanishing Gaussian curvature} $K$, and $\Phi^i$ is the \emph{graph of a $C^1$ function} $F$ defined in a planar domain.}
\smallskip

\mscomm{We are going to see soon that Laguerre minimal surfaces are analytic. An analytic continuation of $\Phi$ may easily acquire points with $K=0$, self-intersections, singularities, and $F$ may become multi-valued. The resulting complete surface gives a lot of geometric insight but needs not satisfy condition~(*) globally.}

The surface $\Phi$ can be reconstructed given the surface $\Phi^i$:

\begin{proposition}\label{cor1} \textup{(cf. \cite[Corollary~2]{PGM:2009})}
\mscomm{Assume (*).} Then the surface $\Phi$ can be parametrized as follows:
\begin{equation}\label{eq-cor1}
\mathbf{r}(x,y)=
\frac{1}{x^2+y^2+1}\left(
\text{\begin{tabular}{c}
$(x{}^2-y{}^2-1)F_x+2x y F_y-2x F$\\
$(y{}^2-x{}^2-1)F_y+2x y F_x-2y F$\\
$2x F_x+2y F_y-2 F$
\end{tabular}}
\right).
\end{equation}
\end{proposition}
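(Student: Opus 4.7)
The plan is to invert the isotropic-model map \eqref{Pi} to recover the family of oriented tangent planes of $\Phi$ from the graph $z=F(x,y)$ of $\Phi^i$, and then reconstruct $\Phi$ as the envelope of that family.

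First, I would solve for the unit normal $(n_1,n_2,n_3)$ and support distance $h$ of the tangent plane corresponding to the point $(x,y,F(x,y))\in\Phi^i$. From \eqref{Pi} we have $n_1/(n_3+1)=x$, $n_2/(n_3+1)=y$, $h/(n_3+1)=F(x,y)$, and combining these with $n_1^2+n_2^2+n_3^2=1$ and setting $k=n_3+1$, the normalization forces $k=2/(x^2+y^2+1)$. This yields
\begin{equation*}
n_1=\tfrac{2x}{x^2+y^2+1},\quad n_2=\tfrac{2y}{x^2+y^2+1},\quad n_3=\tfrac{1-x^2-y^2}{x^2+y^2+1},\quad h=\tfrac{2F}{x^2+y^2+1},
\end{equation*}
so that the tangent plane to $\Phi$ at the unknown point $\mathbf{r}(x,y)$ has equation
\begin{equation*}
2xX+2yY+(1-x^2-y^2)Z+2F(x,y)=0. \tag{$*$}
\end{equation*}

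Next I would recover $\Phi$ as the envelope of this two-parameter family of planes. Nondevelopability of $\Phi$ guarantees that the characteristic set of $(*)$ for fixed $(x,y)$ degenerates to a single point, namely the contact point $\mathbf{r}(x,y)$. Differentiating $(*)$ in $x$ and using the tangency identity $2xX_x+2yY_x+(1-x^2-y^2)Z_x=0$ makes all derivatives of $(X,Y,Z)$ drop out, leaving $X=xZ-F_x$; the $y$-differentiation analogously yields $Y=yZ-F_y$. Substituting these expressions back into $(*)$ produces a single linear equation in $Z$, which solves to $Z=(2xF_x+2yF_y-2F)/(x^2+y^2+1)$, and reinserting into $X=xZ-F_x$, $Y=yZ-F_y$ gives the two remaining components of \eqref{eq-cor1}.

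The only subtle point is the role of the nondevelopability hypothesis: without it the envelope of the planes $(*)$ could be a genuinely ruled piece along which many points share the same tangent plane, so the tangency identities would no longer isolate the contact point and the reconstruction would be ambiguous. The multi-valuedness of $F$ is not an issue, since the entire argument is local and can be run on each single-valued branch of the graph $\Phi^i$.
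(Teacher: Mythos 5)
Your derivation is correct: inverting the map \eqref{Pi} with the unit-normal constraint gives exactly the plane family $2xX+2yY+(1-x^2-y^2)Z+2F=0$, and the envelope equations $X=xZ-F_x$, $Y=yZ-F_y$ substituted back reproduce all three components of \eqref{eq-cor1}. The paper itself gives no proof of this proposition (it is quoted from \cite[Corollary~2]{PGM:2009}), but your argument is the standard reconstruction-as-envelope computation that underlies that reference, and your remarks on the roles of nondevelopability and of working branchwise for multi-valued $F$ are apt.
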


\begin{proof}\mscomm{
Let $(n_1,n_2,n_3)$ be the oriented unit normal at a point $(r_1,r_2,r_3)$ of $\Phi$. The oriented tangent plane $P$ at the point is given by $n_1x+n_2y+n_3z-n_1r_1-n_2r_2-n_3r_3=0$. By the definition of $P^i$ we get
\begin{equation}
  F\left(\frac{n_1}{n_3+1},\frac{n_2}{n_3+1}\right) =
  -\frac{n_1 r_1+n_2 r_2+n_3 r_3}{n_3+1}.\label{eq-f}
\end{equation}
Now let $(x,y)=\left(\tfrac{n_1}{n_3+1},\tfrac{n_2}{n_3+1}\right)$ be the stereographic projection of $(n_1,n_2,n_3)$ from $(0,0,-1)$. Then $n_1^2+n_2^2+n_3^2=1$ implies that $\tfrac{n_3}{n_3+1}=\tfrac{1}{2}(1-x^2-y^2)$.
Substituting these expressions into~\eqref{eq-f}, we get
\begin{equation}
  F\left(x,y\right) =
  -x r_1-y r_2-\tfrac{1}{2}(1-x^2-y^2) r_3.\label{eq-f1}
\end{equation}
Consider $(r_1,r_2,r_3)$ as functions in $(x,y)$.
Differentiating \eqref{eq-f1} with respect to $x$,
and using the condition $
n_1\tfrac{\partial}{\partial x}r_1+n_2\tfrac{\partial}{\partial x}r_2+n_3\tfrac{\partial}{\partial x}r_3=0$ that $(n_1,n_2,n_3)$ is normal to~$\Phi$, we get
\begin{equation}
  F_x\left(x,y\right) =
  x r_3 -r_1.\label{eq-fx}
\end{equation}
Analogously we get
\begin{equation}
  F_y\left(x,y\right) =
  y r_3 -r_2.\label{eq-fy}
\end{equation}
Solving ~\eqref{eq-f1}--\eqref{eq-fy} as a linear system in $r_1,r_2,r_3$ we get~\eqref{eq-cor1}.
}
\end{proof}

\mscomm{Now we make} the following key observation; see Figure~\ref{figure-cones}:

\begin{proposition} \label{observation} \mscomm{Assume (*).}
The surface $\Phi$ is enveloped by a family of cones if and only if \mscomm{through each point of the surface $\Phi^i$ there passes an arc of an i-M-circle fully contained in $\Phi^i$}.
\end{proposition}

\begin{proof}\mscomm{If a cone $C$ is tangent to $\Phi$ along a continuous curve, then the tangent plane is not constant along the curve (otherwise $C$ and $\Phi$ are tangent along a line segment which implies vanishing Gaussian curvature), thus an arc of the i-M-circle $C^i$ is fully contained in $\Phi^i$, and vice versa. This argument is valid as well when $C$ degenerates to a line segment on $\Phi$.}
\end{proof}

\begin{theorem} \label{pgm} \textup{\cite[Theorem~1]{PGM:2009}}
\mscomm{Assume (*).} The surface $\Phi$ is Laguerre minimal if and only if \mscomm{$F$
is biharmonic}, i.~e., satisfies the equation $\Delta(\Delta(F))=0$.
\end{theorem}

\mscomm{The graph of a biharmonic function, as well as of its analytic continuation, which may become multi-valued, is called an \emph{i-Willmore surface}. One can always avoid multi-valued functions by restriction to a smaller part of the surface.}

Convolution surface $a_1\Phi_1\oplus a_2\Phi_2$ corresponds in the isotropic model to the linear combination of the two multi-valued functions whose graphs are $\Phi_1^i$ and $\Phi_2^i$. Thus a convolution surface of two L-minimal surfaces is L-minimal~\cite[Corollary~3]{PGM:2009}.

L-transformations correspond to i-M-transformations in the isotropic model and vice versa. Some examples are given in Table~\ref{isotropic-trans}.
Invariance of L-minimal surfaces under L-transformations is translated in the isotropic model as follows:

\begin{theorem} \label{cl3} \textup{\cite[Theorem~1]{PGM:2009}} Suppose that $F$ is a graph of a function biharmonic in a region
$U\subset\mathbb{R}^2$ and
$m:\mathbb{R}^3\cup\ell_\infty\to\mathbb{R}^3\cup\ell_\infty$
is an isotropic M\"obius transformation.
Then $m(F)$ is a graph of a function biharmonic in the top view of $m(U\times\mathbb{R})-\ell_\infty$.
\end{theorem}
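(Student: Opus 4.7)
The plan is to reduce everything to the Laguerre side via Theorem~\ref{pgm} and the correspondence between i-M-transformations and L-transformations recorded in Table~\ref{isotropic-trans}. Assuming $F$ is biharmonic on $U$, Theorem~\ref{pgm} yields an L-minimal (Legendre) surface $\Phi\subset\mathbb{R}^3$ whose isotropic image is the graph of $F$, explicitly reconstructed by~(\ref{eq-cor1}). The i-M-transformation $m$ corresponds to some L-transformation $M$; since $\Omega$ in~(\ref{Lfunctional}) is L-invariant, $M(\Phi)$ is again L-minimal. Applying Theorem~\ref{pgm} in the converse direction to $M(\Phi)$ forces its isotropic image, which is precisely $m$ applied to the graph of $F$, to be the graph of a (multi-valued) biharmonic function. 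The natural domain is then the top view of $m(U\times\mathbb{R})$ with $\ell_\infty$ removed, as claimed.

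For a self-contained proof that avoids invoking L-minimality, one can instead verify the invariance of $\Delta^2=0$ directly on each generator of the i-M-group listed in Table~\ref{isotropic-trans} together with $(x,y,z)\mapsto(x+1,y,z)$. Translations and rotations precompose $F$ with a Euclidean motion of the top view, which preserves biharmonicity. The vertical shears $(x,y,z)\mapsto(x,y,z+ax+by)$ and $(x,y,z)\mapsto(x,y,z+x^2+y^2-1)$ add a polynomial of total degree $\le 2$ to $F$, which is automatically biharmonic. The homothety $(x,y,z)\mapsto(x,y,az)$ and the transformation $\Lambda$ act by scalar multiplication. The only genuinely nontrivial generator is $(x,y,z)\mapsto(x,y,z)/(x^2+y^2)$, whose top view is the planar Euclidean inversion and whose vertical action divides $F$ by $x^2+y^2$. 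Here one needs the classical Kelvin-type transformation rule stating that $F(x,y)/(x^2+y^2)$, composed with inversion, is biharmonic whenever $F$ is; this can be verified via Almansi's decomposition $F=u+(x^2+y^2)\,v$ with $u,v$ harmonic, using the ordinary Kelvin invariance of harmonic functions on each summand.

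The main obstacle will be bookkeeping of domains. Since $F$ may be multi-valued and the inversion generator can create or destroy branch points at the preimage of $\ell_\infty$, one must confirm that $m(F)$ remains a graph in the required generalized sense, and that biharmonicity holds on the correct open set. The removal of $\ell_\infty$ in the statement is exactly what accounts for the isolated singularities produced by the inversion; away from them biharmonicity is a local condition and is inherited from the Laguerre-side argument (or from the generator-by-generator check) on any simply connected patch of the new top view.
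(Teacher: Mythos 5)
The paper does not actually prove this statement: Theorem~\ref{cl3} is quoted verbatim from \cite[Theorem~1]{PGM:2009} (the first paper in the series), so there is no internal proof to compare against. Judged on its own merits, your second, generator-by-generator argument is correct and self-contained: rotations, the planar translation, and the shears $z\mapsto z+ax+by$, $z\mapsto z+x^2+y^2-1$, $z\mapsto z+h$ add polynomials of degree at most two (hence biharmonic functions) after a rigid motion of the top view; the homothety and $\Lambda$ act by scaling; and the inversion generator turns $F$ into $(u^2+v^2)\,F\bigl(u/(u^2+v^2),\,v/(u^2+v^2)\bigr)$, which is exactly the two-dimensional biharmonic Kelvin transform, and your Almansi-based check $F=u+(x^2+y^2)v\mapsto (v\circ\iota)+(u^2+v^2)(u\circ\iota)$ is the standard and correct way to see its biharmonicity. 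Your first argument, by contrast, should not be leaned on: it is essentially circular in this paper's framework (Theorem~\ref{pgm} and Theorem~\ref{cl3} are two halves of the same imported theorem), it invokes the L-invariance of the functional $\Omega$, which is itself a nontrivial classical fact, and the dictionary of Theorem~\ref{pgm} only applies to nondevelopable immersed surfaces, so biharmonic $F$ whose graphs correspond to degenerate Legendre surfaces (e.g.\ affine or quadratic $F$, giving points, planes or spheres) fall outside its hypotheses and would have to be treated separately. The direct verification avoids all of this and is the argument you should present; your remarks on domains and the removal of $\ell_\infty$ (the pole of the inversion) are at the right level of care.
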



\begin{figure}[htbp]
\centering
\includegraphics[width=\textwidth]{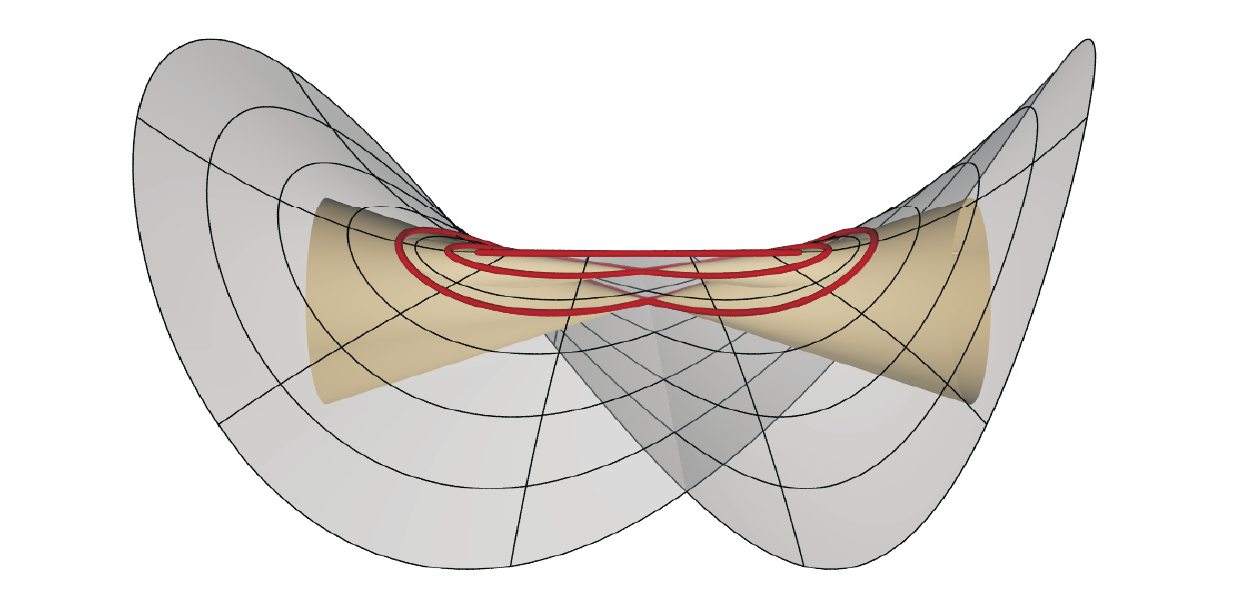}
 \caption{The L-minimal surface $\mathbf{r}_5$ arising as the envelope of a family of cones; see Example~\ref{ex5} for the details.}
 \label{figure-cones}
\end{figure}

\bigskip
\noindent\emph{Plan of the proof of Theorem 1}
To a ruled L-minimal surface there corresponds an i-Willmore surface containing a family of i-M-circles
in the isotropic model.

First we show that the top view of the family of i-M-circles is a pencil. In other words,
all the rulings of the L-minimal surface are parallel to one plane.

Then by appropriate choice of coordinates we transform the pencil into a pencil of lines.
In the latter case we describe all possible i-Willmore surfaces by solving the biharmonic
equation explicitly.

Returning to the Euclidean model we get a description of all ruled L-minimal surfaces.

\section{Biharmonic functions carrying a family of i-circles}\label{pencil}


\subsection{Statement of the Pencil theorem}

In this section we show that the top view of a family of i-M circles contained in a graph of a biharmonic function (besides a few exceptions) is a \emph{pencil},
i. e., a set of circles orthogonal to two fixed ones. This implies that the rulings of a ruled L-minimal surface are parallel to one plane.
Denote by $I=[0;1]$.

\begin{theorem}[Pencil theorem] \label{th1} Let $F(x,y)$ be a biharmonic function in a region $U\subset\mathbb{R}^2$. Let $S_t$, $t\in I$, be an analytic family of circles in the plane. Suppose that for each $t\in I$  we have $S_t\cap U\ne\emptyset$ and the restriction $F\left|_{S_t\cap U}\right.$ is a restriction of a linear function. Then either $S_t$, $t\in I$, is a pencil of circles or
\begin{equation}\label{eq-th1}
F(x,y)=A((x-a)^2+(y-b)^2)+\frac{B(x-c)^2+C(x-c)(y-d)+D(y-d)^2}{(x-c)^2+(y-d)^2}
\end{equation}
for some $a,b,c,d,A,B,C,D\in\mathbb{R}$.
\end{theorem}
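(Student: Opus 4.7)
My plan is to use Goursat's representation: any biharmonic function can be written, locally, as $F(x,y) = 2\text{Re}(\bar z \phi(z) + \chi(z))$ with $z = x + iy$ and $\phi, \chi$ holomorphic. On the circle $S_t$ with center $z_0 = z_0(t)$ and radius $r = r(t)$, the identity $\bar z = \bar z_0 + r^2/(z - z_0)$ holds, and substituting into $F|_{S_t}$ and expanding as a Fourier series in $\theta$ (parametrization $z = z_0 + re^{i\theta}$) shows that the hypothesis ``$F|_{S_t}$ is a linear function of $(x,y)$'' is equivalent to the vanishing of all Fourier modes of order $|n| \geq 2$. This yields the infinite family of constraints
\[
(n+1)\bar z_0(t)\phi^{(n)}(z_0(t)) + (n+1)\chi^{(n)}(z_0(t)) + r(t)^2 \phi^{(n+1)}(z_0(t)) = 0 \qquad (\star_n)
\]
for every $n \geq 2$ and every $t \in I$.

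Summing $(\star_n)$ against the Taylor weights $(z - z_0)^n/(n+1)!$ produces a compact master identity
\[
(z - z_0)\big[\bar z_0 R_\phi^{[1]}(z; z_0) + R_\chi^{[1]}(z; z_0)\big] + r^2 R_\phi^{[2]}(z; z_0) = 0,
\]
where $R_f^{[k]}(z; z_0) := f(z) - \sum_{j=0}^k f^{(j)}(z_0)(z-z_0)^j/j!$. This holds as a holomorphic identity in $z$ for every $t$. Differentiating it in $t$ (using analyticity of $z_0(t), r(t)$) and evaluating repeatedly at $z = z_0(t)$ yields, for each $n \geq 2$, the second-order linear recurrence
\[
(n{+}2)(n{+}1)\dot{\bar z}_0\, \phi^{(n)}(z_0) + 2(n{+}2) r\dot r\, \phi^{(n+1)}(z_0) + r^2 \dot z_0\, \phi^{(n+2)}(z_0) = 0 \qquad (\star\star_n).
\]

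The argument then splits according to whether this recursion is degenerate. In the degenerate case — where the coefficients in $(\star\star_n)$ vanish in a coordinated way (e.g., $\dot z_0 \equiv 0$, or a linear dependence emerges) — I would exploit the i-M-invariance of biharmonicity (Theorem~\ref{cl3}) to normalize the family into a standard representative, check that this normalization reduces $(\star_n)$ to precisely the conditions characterizing a pencil of circles (concentric, parallel, or through a common pair of points after Möbius), and conclude that the original family is a pencil. In the generic (nondegenerate) case, $(\star\star_n)$ is a rigid recursion that propagates through all $n$ simultaneously; together with $(\star_n)$ it should force $\phi$ and $\chi$ each to be a polynomial of bounded degree plus a simple pole at a common point $z_* = c + id$. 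Substituting these forms into $F = 2\text{Re}(\bar z\phi(z) + \chi(z))$ and separating real parts then produces exactly the expression (\ref{eq-th1}), with the $A((x-a)^2+(y-b)^2)$ term arising from the polynomial part and the homogeneous degree-$0$ term arising from the simple poles.

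The main obstacle — and the place where the announced ``new symmetry principle for biharmonic functions'' should enter — is closing the generic case rigorously: showing that the infinite system $(\star\star_n)$ forces the extremely restrictive rational form on $\phi$ and $\chi$, rather than merely constraining some finite Taylor data. I would attack this by interpreting the master identity as a reflection relation of $\phi,\chi$ across each $S_t$, so that the 1-parameter family of circles yields a 1-parameter family of Möbius-type ``symmetries'' of the pair $(\phi, \chi)$; when the family of circles is not a pencil, the commutators of these symmetries should generate enough conditions to pin down the rational structure completely, at which point a direct sanity-check (e.g.\ the example $F(x,y) = (x^2 - y^2)/(x^2 + y^2)$, which is linear on the 2-parameter family of all circles through the origin and hence on non-pencil 1-parameter subfamilies) confirms that the exceptional case (\ref{eq-th1}) is exactly realized.
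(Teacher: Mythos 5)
Your Fourier-mode computation is correct as far as it goes (one can check that $(\star_n)$ and the derived recursion $(\star\star_n)$ are exactly right, and after dividing $(\star\star_n)$ by $(n+2)!$ it even becomes a constant-coefficient recursion in the Taylor coefficients $c_n=\phi^{(n)}(z_0)/n!$, which would force $\phi$ to be rational with at most two simple poles -- a genuinely attractive route to the classification). But the proposal has a gap that is not a technicality: it is the entire content of the paper's proof. Writing $(\star_n)$ presupposes that $\phi$ and $\chi$ are single-valued and holomorphic on the closed disc bounded by each $S_t$ -- you need the full circle to read off Fourier modes, the centre $z_0(t)$ to evaluate $\phi^{(n)}(z_0(t))$, and holomorphy inside the disc to identify Fourier modes with Taylor coefficients. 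The theorem only assumes $F$ is biharmonic in a region $U$ with $S_t\cap U\ne\emptyset$; the restriction is linear only on the arc $S_t\cap U$. The paper's Lemmas~\ref{l4}, \ref{cl4}, \ref{l5} and \ref{l6} (continuation along circles, the double symmetry principle, and continuation across nested and crossing circles, together with the single-valuedness checks) exist precisely to manufacture the ``sufficiently large'' domain on which an argument like yours, or the paper's more elementary Lemmas~\ref{l1}--\ref{l3}, can run. You invoke the ``symmetry principle'' only as a device to close the generic recursion, but that is not where it is needed: it is needed before you may write a single one of your constraints. Relatedly, in the exceptional case (\ref{eq-th1}) and in the common-point case, $\phi$ or $\chi$ has a pole at $c+id$ lying inside or on the circles, so the Taylor expansion at $z_0(t)$ must be replaced by a Laurent expansion and the constraints $(\star_n)$ change form; as written they would wrongly exclude the very solutions the theorem asserts.

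The second gap is that both branches of your case split end with ``should force'' and ``I would attack this by'': neither the degenerate nor the generic case is actually closed. In particular, the degenerate case (where $\dot z_0$, $\dot r$ conspire) is not a throwaway -- it contains the pencils you must recognize as such, and also families like all circles through two fixed points where your recursion collapses; and in the generic case you still must show the pole locations produced for different $t$ are consistent, that the polynomial parts glue to $A((x-a)^2+(y-b)^2)$, and that $\chi$ is then determined by $(\star_n)$. For comparison, the paper avoids the Goursat machinery for the classification itself: once the domain is large, Lemma~\ref{l1} is pure linear algebra on normalized circle equations, Lemma~\ref{l2} inverts to lines and uses that a quadratic vanishing at three collinear-family intersection points vanishes identically, and Lemma~\ref{l3} uses the Almansi-type representation $F=su_1+u_2$ plus uniqueness for harmonic functions. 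Your recursion idea could plausibly replace Lemmas~\ref{l1}--\ref{l3} in a unified way, but only after the continuation problem is solved, and only after the degenerate subcases are enumerated and checked.
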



The exceptional function~(\ref{eq-th1}) has the following property: there is a $2$-parametric family of circles $S_t$, $t\in I^2$, such that for each $t\in I^2$  the restriction $F\left|_{S_t\cap U}\right.$ is a restriction of a linear function.

\bigskip
\noindent\emph{Plan of the proof of Pencil Theorem~\ref{th1}.}
We say that two circles \emph{cross} each other if their intersection consists of $2$ points. Assume that the family of circles is not a pencil. Then it contains a subfamily
of one of the following types:
\begin{itemize}
\item[(1)] the circles $S_t$, $t\in I$, pairwise cross but do not pass through one point;
\item[(2)] the circles $S_t$, $t\in I$, have a common point $O$;
\item[(3)] the circles $S_t$, $t\in I$, are nested.
\end{itemize}

First we prove the theorem in case when the region $U$ is sufficiently large, i.~e., $U\supset \bigcup S_t$, $U\supset \bigcup S_t-\{O\}$ and $U=\mathbb{R}^2$ for types (1), (2) and (3), respectively.

Then we reduce the theorem to the latter case by a biharmonic continuation of the function $F$; see Figure~\ref{figure-continuation}. The continuation is in $2$ steps.

In the first step we extend the function $F$ \emph{along} the circles $S_t$ until we reach the envelope of the family of circles (if the envelope is nonempty). This is done easily for arbitrary real analytic function $F$. The main difficulty is that extending $F$ along the circles beyond the envelope may lead to a multi-valued function.

In the second step we extend the function $F$ \emph{across} the circles $S_t$ to make the region
$U$ sufficiently large keeping the function single-valued. This is done using a new
symmetry principle for biharmonic functions.

\begin{figure}[htbp]
\centering
\definecolor{ffqqqq}{rgb}{1,0,0}
\definecolor{ffffff}{rgb}{1,1,1}
\definecolor{ffffzz}{rgb}{1,1,0.6}
\begin{tikzpicture}[line cap=round,line join=round,>=triangle 45,x=0.6cm,y=0.6cm]
\clip(-1.18,-3.77) rectangle (12.17,5.14);
\fill[color=ffffzz,fill=ffffzz,fill opacity=0.25] (-1,5) -- (12,5) -- (12,-3.53) -- (-1,-3.53) -- cycle;
\fill[line width=0pt,color=ffffff,fill=ffffff,fill opacity=1.0] (4.7,3.41) -- (9.87,0.45) -- (9.44,-2.45) -- (3.13,-2.38) -- cycle;
\draw (-1,5)-- (12,5);
\draw (12,5)-- (12,-3.53);
\draw (12,-3.53)-- (-1,-3.53);
\draw (-1,-3.53)-- (-1,5);
\draw [color=ffffff,fill=ffffff,fill opacity=1.0] (3.16,0.72) circle (1.86cm);
\draw [line width=1.2pt,color=ffqqqq] (4.7,3.41)-- (9.87,0.45);
\draw [line width=1.2pt,color=ffqqqq] (3.13,-2.38)-- (9.08,-2.45);
\draw [shift={(3.16,0.72)},line width=1.2pt,color=ffqqqq]  plot[domain=1.05:4.7,variable=\t]({1*3.1*cos(\t r)+0*3.1*sin(\t r)},{0*3.1*cos(\t r)+1*3.1*sin(\t r)});
\draw [shift={(6.33,2.48)},line width=1.2pt,dash pattern=on 5pt off 5pt]  plot[domain=-0.66:2.8,variable=\t]({1*0.83*cos(\t r)+0*0.83*sin(\t r)},{0*0.83*cos(\t r)+1*0.83*sin(\t r)});
\draw [shift={(5.07,0.2)},line width=1.2pt]  plot[domain=1.07:4.69,variable=\t]({1*2.6*cos(\t r)+0*2.6*sin(\t r)},{0*2.6*cos(\t r)+1*2.6*sin(\t r)});
\draw [shift={(5.07,0.2)},line width=1.2pt,dash pattern=on 5pt off 5pt]  plot[domain=-1.6:1.07,variable=\t]({1*2.6*cos(\t r)+0*2.6*sin(\t r)},{0*2.6*cos(\t r)+1*2.6*sin(\t r)});
\draw [shift={(9.28,-0.95)},line width=1.2pt,dash pattern=on 5pt off 5pt,color=ffqqqq]  plot[domain=-1.7:1.17,variable=\t]({1*1.51*cos(\t r)+0*1.51*sin(\t r)},{0*1.51*cos(\t r)+1*1.51*sin(\t r)});
\draw [shift={(6.33,2.48)},line width=0.4pt,fill=black,fill opacity=0.25]  (0,0) --  plot[domain=2.8:5.62,variable=\t]({1*0.83*cos(\t r)+0*0.83*sin(\t r)},{0*0.83*cos(\t r)+1*0.83*sin(\t r)}) -- cycle ;
\draw [shift={(6.33,2.48)},line width=1.2pt]  plot[domain=2.8:5.62,variable=\t]({1*0.83*cos(\t r)+0*0.83*sin(\t r)},{0*0.83*cos(\t r)+1*0.83*sin(\t r)});
\draw [shift={(9.28,-0.95)},color=ffffzz,fill=ffffzz,fill opacity=0.25]  plot[domain=1.17:4.82,variable=\t]({1*1.51*cos(\t r)+0*1.51*sin(\t r)},{0*1.51*cos(\t r)+1*1.51*sin(\t r)});
\draw [shift={(9.28,-0.95)},line width=1.2pt,color=ffqqqq]  plot[domain=1.17:4.82,variable=\t]({1*1.51*cos(\t r)+0*1.51*sin(\t r)},{0*1.51*cos(\t r)+1*1.51*sin(\t r)});
\draw (5.66,2.68) node[anchor=north west] {$V_p$};
\draw (1.52,3.52) node[anchor=north west] {$R_q$};
\draw (8.15,2.32) node[anchor=north west] {$E_0$};
\draw (8.03,0.1) node[anchor=north west] {$R_r$};
\draw (8.3,-0.92) node[anchor=north west] {$S_r-R_r$};
\draw (5.48,-2.42) node[anchor=north west] {$E-E_0$};
\fill [color=black] (6.33,2.48) circle (1.5pt);
\draw[color=black] (3.98,1.81) node {$R_p$};
\draw[color=black] (6.35,-0.53) node {$S_p-R_p$};
\end{tikzpicture}
\caption{Biharmonic continuation of a function $F$ whose restriction to an arc of each circle $S_t$, $t\in I$, is linear. First we extend $F$ along the circles to the white region bounded by certain arcs $R_q\subset S_q$, $R_r\subset S_r$ and two pieces of the envelope $E$. Then we extend $F$ across the circles to a neighborhood of the piece $E_0$ of the envelope, using reflection of the gray region $V_p$ with respect a circle $S_p$. For details refer to Lemma~\ref{l6} and its proof.}
\label{figure-continuation}
\end{figure}
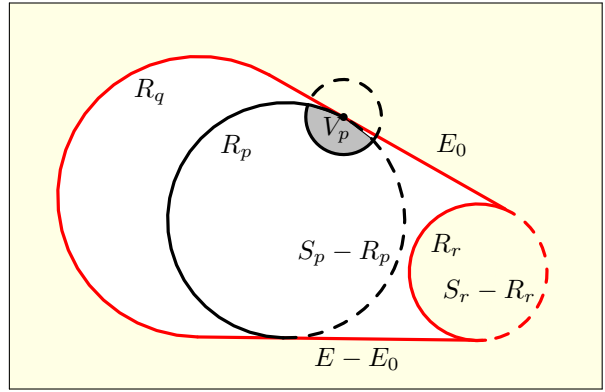

\subsection{Three typical cases}

First let us prove Theorem~\ref{th1} in three typical particular cases
treated in Lemmas~\ref{l1}, \ref{l2} and~\ref{l3} for ``sufficiently large'' sets $U$.

\begin{lemma}[Crossing circles] \label{l1} Let $S_t$, $t\in I$, be a family of pairwise crossing circles in the plane distinct from a pencil of circles. Let $F$ be an arbitrary function defined in the set $U=\bigcup_{t\in I} S_t$. Suppose that for each $t\in I$ the restriction
$F\left|_{S_t}\right.$ is a restriction of a linear function. Then $F=A((x-a)^2+(y-b)^2)+B$ for some $a,b,A,B\in\mathbb{R}$.
\end{lemma}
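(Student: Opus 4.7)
The plan is to show that $F$ coincides on $U$ with a function of the form $G(x,y)=A(x^2+y^2)+Bx+Cy+D$, which after completing the square (when $A\ne 0$) is exactly the form stated in the lemma. The key observation driving the proof is that every such $G$ restricts to a linear function on any circle: on the circle of center $(p,q)$ and radius $r$ one substitutes $x^2+y^2=2px+2qy+r^2-p^2-q^2$, which converts the quadratic part into a linear expression. So the class $\mathcal H:=\{A(x^2+y^2)+Bx+Cy+D\}$ is the natural candidate class for $F$.

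Next I would determine $G$ from three well-chosen circles. For each $i$ let $L_i$ be the linear function with $F|_{S_{t_i}}=L_i|_{S_{t_i}}$. Since $F$ is single-valued at the two points of $S_{t_i}\cap S_{t_j}$, the linear polynomial $L_i-L_j$ vanishes there, hence on the whole radical axis; equivalently
\begin{equation*}
 L_i-L_j=-A_{ij}\bigl(\mathrm{pow}_{S_{t_i}}-\mathrm{pow}_{S_{t_j}}\bigr),
\end{equation*}
where $\mathrm{pow}_S(x,y)=(x-p)^2+(y-q)^2-r^2$ and $A_{ij}\in\mathbb R$. By hypothesis I can pick three indices $t_1,t_2,t_3$ with $S_{t_1},S_{t_2},S_{t_3}$ not coaxial; the linear polynomials $\mathrm{pow}_{S_{t_1}}-\mathrm{pow}_{S_{t_2}}$ and $\mathrm{pow}_{S_{t_1}}-\mathrm{pow}_{S_{t_3}}$ are then linearly independent. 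Combining the cyclic identity $(L_1-L_2)+(L_2-L_3)+(L_3-L_1)=0$ with its analogue for the power differences forces $A_{12}=A_{23}=A_{31}=:A$, and one then checks directly that $G:=L_1+A\,\mathrm{pow}_{S_{t_1}}$ lies in $\mathcal H$ and satisfies $G|_{S_{t_i}}=L_i|_{S_{t_i}}=F|_{S_{t_i}}$ for $i=1,2,3$.

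For the extension step I would pick an arbitrary $S_t$ in the family and note that $F-G$ restricted to $S_t$ is a linear function $M_t$ vanishing on $S_t\cap(S_{t_1}\cup S_{t_2}\cup S_{t_3})$. A nonzero linear function on a circle has at most two zeros, so it is enough to exhibit three distinct such points. The only way to fail is that every pair $\{S_t,S_{t_i}\}$ meets in the same two points $\{P,Q\}$; but then $S_{t_1},S_{t_2},S_{t_3},S_t$ all lie in the pencil through $P$ and $Q$, contradicting the non-pencil choice of $t_1,t_2,t_3$. Hence $M_t\equiv 0$, giving $F=G$ on each $S_t$ and hence on $U$. The main obstacle I anticipate is precisely this final bookkeeping and the related sign/linear-independence check in the cyclic argument; as a minor point, when $A=0$ the function $F$ is merely affine and the parameters $a,b$ of the stated form play no role, so the statement is best read with an implicit linear summand absorbed into the constant $B$.
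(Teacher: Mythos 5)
Your proposal is correct and follows essentially the same route as the paper: your $\mathrm{pow}_{S_t}$ is the paper's normalized equation $s_t$, the proportionality $L_i-L_j=-A_{ij}(\mathrm{pow}_{S_{t_i}}-\mathrm{pow}_{S_{t_j}})$ and the cyclic identity forcing $A_{12}=A_{23}=A_{31}$ are exactly the paper's steps, and your $G=L_1+A\,\mathrm{pow}_{S_{t_1}}$ is the paper's $k_{12}s_1+l_1$. The only (harmless) divergence is the last step for a general $S_t$: the paper swaps $s_t$ into the independent triple and reruns the argument, while you count zeros of the linear function $M_t$ on $S_t$ and rule out the coaxial degeneration — both work.
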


\begin{proof}
Denote by $l_t$ the linear function $F\left|_{S_t}\right.$.
Let $s_t=0$ be the normalized equation of the circle $S_t$, i.~e., $s_t=x^2+y^2+\dots$ and $s_1\left|_{S_1}\right.=0$. For any pair $s,t\in I$ both differences $s_t-s_s$ and $l_t-l_s$ are linear functions vanishing on $S_s\cap S_t$. Thus $l_t-l_s=k_{st}(s_t-s_s)$ for some number $k_{st}$.

Since the family $S_t$ is not a pencil it follows that there are $3$ circles $S_1,S_2,S_3$ in the family such that the functions $s_1,s_2,s_3$ are linearly independent. Let us show that $F=k_{12}s_1+l_1$.

Indeed, in the circle $S_1$ we have $F=l_1=k_{12}s_1+l_1$ because $s_1\left|_{S_1}\right.=0$. In the circle $S_2$ we have
$F=l_2=k_{12}s_2+l_2=k_{12}s_1+l_1$ by definition of the number $k_{12}$.

Consider the circle $S_3$. We have $k_{23}=k_{31}=k_{12}$ because otherwise $k_{12}(s_1-s_2)+k_{23}(s_2-s_3)+k_{31}(s_3-s_1)=
(l_1-l_2)+(l_2-l_3)+(l_3-l_1)=0$ is a nontrivial linear combination of $s_1,s_2,s_3$. Thus in the circle $S_3$ we have
$F=l_3=k_{13}s_3+l_3=k_{13}s_1+l_1=k_{12}s_1+l_1$.

Finally, take any circle $S_t$. We can replace one of the functions $s_1,s_2,s_3$ by $s_t$ to get still a linearly independent triple.
Repeating the argument from the previous paragraph we get $F=k_{12}s_1+l_1$ in $S_t$. Thus $F=k_{12}s_1+l_1$ in the whole set $U$.
\qed\end{proof}

\begin{lemma}[Circles with a common point] \label{l2} Let $S_t$, $t\in I$, be a family of pairwise crossing circles in the plane passing through the origin $O$. Assume that no three circles of the family belong to one pencil. Let $F$ be an arbitrary function defined in the set $U=\bigcup_{t\in I} S_t-\{O\}$. Suppose that for each $t\in I$ the restriction
$F\left|_{S_t-\{ O \}}\right.$ is a restriction of a linear function. Then
$$
F(x,y)=A((x-a)^2+(y-b)^2)+\frac{Bx^2+Cxy+Dy^2}{x^2+y^2}
$$
for some $a,b,A,B,C,D\in\mathbb{R}$.
\end{lemma}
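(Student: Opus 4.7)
The plan is to apply the inversion $\iota:(x,y)\mapsto(x,y)/(x^2+y^2)$ centered at $O$ (an i-M-transformation from Table~\ref{isotropic-trans}), which sends each circle $S_t$ through $O$ to a straight line $\ell_t$ and reduces the problem to an interpolation question for a family of lines. Define
\[
G(u,v):=(u^2+v^2)\,F\bigl(u/(u^2+v^2),\,v/(u^2+v^2)\bigr)
\]
on $\bigcup_{t\in I}\ell_t$. A direct substitution shows that if $F|_{S_t-\{O\}}=\alpha_t x+\beta_t y+\gamma_t$, then $G|_{\ell_t}=\alpha_t u+\beta_t v+\gamma_t(u^2+v^2)$, so $G$ restricted to each line $\ell_t$ is a polynomial of degree at most two in the line parameter. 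Under $\iota$, the hypothesis that $S_s$ and $S_t$ cross translates to ``$\ell_s$ and $\ell_t$ are not parallel,'' and ``no three $S_t$ belong to one pencil'' translates to ``no three $\ell_t$ are concurrent.''

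Next I would pick three lines $\ell_1,\ell_2,\ell_3$ of the family in general position, and argue that the restriction map from the six-dimensional space of bivariate quadratic polynomials $q(u,v)$ to the six-dimensional space of triples of line-quadratics agreeing pairwise at the three vertices $\ell_i\cap\ell_j$ is a linear isomorphism. Injectivity follows from B\'ezout's theorem: the zero set of a nonzero bivariate quadratic is a conic, which cannot contain three distinct lines. Hence there exists a unique bivariate quadratic $q$ with $q|_{\ell_i}=G|_{\ell_i}$ for $i=1,2,3$. For any other $\ell_t$ in the family, it meets $\ell_1,\ell_2,\ell_3$ in three distinct points (since no three lines are concurrent), and both $G|_{\ell_t}$ and $q|_{\ell_t}$ are degree-at-most-two polynomials agreeing at these points, hence equal. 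Therefore $G\equiv q$ on $\bigcup_{t\in I}\ell_t$.

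Finally, reverting via $F(x,y)=(x^2+y^2)\,q\bigl((x,y)/(x^2+y^2)\bigr)$ and expanding $q(u,v)=Pu^2+Quv+Rv^2+Su+Tv+U$ gives
\[
F(x,y)=U(x^2+y^2)+Sx+Ty+\frac{Px^2+Qxy+Ry^2}{x^2+y^2};
\]
completing the square in the first three terms (when $U\ne 0$) and absorbing the resulting constant into the third term yields the claimed form with $A=U$, $a=-S/(2U)$, $b=-T/(2U)$, $C=Q$, and $B=P-(S^2+T^2)/(4U)$, $D=R-(S^2+T^2)/(4U)$. I expect the principal difficulties to be the careful bookkeeping of the geometric hypotheses under inversion (in particular, identifying pencils of circles through $O$ with concurrent or parallel families of image lines, and checking that the argument needs no regularity of $F$) and the treatment of the degenerate case $U=0$, where the square completion fails and the case must be classified separately.
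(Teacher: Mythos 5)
Your proposal is correct and follows essentially the same route as the paper: invert at $O$ to turn the circles into pairwise non-parallel, non-concurrent lines, observe that the transformed function is quadratic on each line, pin down a single bivariate quadratic using three lines in general position and the three-point interpolation argument on every further line, then invert back. The only difference is cosmetic --- the paper exhibits the interpolating quadratic explicitly as $k_{12}l_1l_2+k_{23}l_2l_3+k_{31}l_3l_1+l$ where you argue by a dimension count with B\'ezout for injectivity --- and your remark about the degenerate case of a surviving linear term is a fair point that the paper itself glosses over.
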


\begin{proof}
Perform the transformation $(x,y,z)\mapsto(x,y,z)/(x^2+y^2)$.
Then the family of circles $S_t$ transforms to a family of lines $L_t$.
By the assumptions of the lemma any two of the lines $L_t$ intersect each other but
no three of the lines $L_t$ pass through one point.
The graph of the function $F$ transforms to a graph of a function $G$ defined in $V=\bigcup_{t\in I} L_t$.
For each $t\in I$ the restriction $G\left|_{L_t}\right.$ is a quadratic function.

Take three lines $L_1,L_2,L_3$ from the family. Let $l_1,l_2,l_3$ be nonzero linear functions vanishing in the lines $L_1,L_2,L_3$, respectively. Let $l$ be a linear function such that $l=F$ in the points $L_1\cap L_2,L_2\cap L_3, L_3\cap L_1$.
Since $G\left|_{L_1}\right.$ is quadratic and $G-l=0$ in the points $L_1\cap L_2$ and $L_1\cap L_3$ it follows that
$G\left|_{L_1}\right.=k_{23}l_2l_3+l$ for some number $k_{23}$.
Analogously, $G\left|_{L_2}\right.=k_{31}l_3l_1+l$ and $G\left|_{L_3}\right.=k_{12}l_1l_2+l$ for some numbers $k_{12}$ and~$k_{31}$.

Let us prove that $G=k_{12}l_1l_2+k_{23}l_2l_3+k_{31}l_3l_1+l$ in the whole set $V$. Indeed, consider the difference
$H=k_{12}l_1l_2+k_{23}l_2l_3+k_{31}l_3l_1+l-G$. Then $H\left|_{L_1}\right.=0$, $H\left|_{L_2}\right.=0$, $H\left|_{L_3}\right.=0$ by the above.
Take a line $L_t$ distinct from $L_1,L_2,L_3$. Then $H\left|_{L_t}\right.$ is a quadratic function.
On the other hand, $H(L_t\cap L_1)=H(L_t\cap L_2)=H(L_t\cap L_3)=0$. Since the points
$L_t\cap L_1,L_t\cap L_2,L_t\cap L_3$ are pairwise distinct it follows that $H\left|_{L_t}\right.=0$. So the function $H$ vanishes in each line $L_t$. Thus $H=0$ in the set $V$.

We have proved that $G$ is a polynomial of degree not greater than $2$. Performing the inverse transformation $(x,y,z)\mapsto(x,y,z)/(x^2+y^2)$ we obtain the required formula for the function $F$.
\qed\end{proof}

\begin{lemma}[Nested circles] \label{l3}
Let $S_1$ and $S_2$ be the pair of circles $x^2+y^2=1$ and $x^2+y^2=2$.
Let $F$ be a function biharmonic in the whole plane $\mathbb{R}^2$.
Suppose that for each $t=1,2$ the restriction
$F\left|_{S_t}\right.$ is a restriction of a linear function. Then
$$
F(x,y)=(x^2+y^2)(Ax+By+C)+ax+by+c
$$
for some $a,b,c,A,B,C\in\mathbb{R}$.
\end{lemma}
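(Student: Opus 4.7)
My strategy is to use the Almansi representation together with a polar Fourier expansion, then read off the constraints imposed by linearity on the two circles.

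Since $F$ is biharmonic on the star-shaped domain $\mathbb{R}^2$, Almansi's theorem lets me write
\[
F(x,y) = u(x,y) + (x^2+y^2)\,v(x,y),
\]
where $u$ and $v$ are harmonic on all of $\mathbb{R}^2$. Being entire harmonic, they admit globally convergent polar expansions with no negative powers of $r$:
\[
u(r,\theta) = \alpha_0 + \sum_{n\ge 1} r^n(\alpha_n\cos n\theta + \beta_n\sin n\theta),
\qquad
v(r,\theta) = \gamma_0 + \sum_{n\ge 1} r^n(\gamma_n\cos n\theta + \delta_n\sin n\theta).
\]
Combining these yields
\[
F(r,\theta) = (\alpha_0 + r^2\gamma_0) + \sum_{n\ge 1}\bigl[(r^n\alpha_n + r^{n+2}\gamma_n)\cos n\theta + (r^n\beta_n + r^{n+2}\delta_n)\sin n\theta\bigr].
\]

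Now I impose the hypothesis. The restriction of any linear function $ax+by+c$ to the circle of radius $r_0$ takes the form $c + ar_0\cos\theta + br_0\sin\theta$, hence contains only Fourier frequencies $0$ and $\pm 1$. Applying this with $r_0=1$ and $r_0=\sqrt 2$ and equating frequencies $n\ge 2$ to zero gives, for each such $n$,
\[
\alpha_n + \gamma_n = 0,\qquad \alpha_n + 2\gamma_n = 0,
\]
together with the analogous pair in $\beta_n,\delta_n$. The determinant of this $2\times 2$ system is nonzero (this is exactly where the two distinct radii enter), so $\alpha_n=\beta_n=\gamma_n=\delta_n=0$ for all $n\ge 2$.

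Only the modes $n=0$ and $n=1$ survive, leaving
\[
F = \alpha_0 + \gamma_0 r^2 + r(\alpha_1\cos\theta+\beta_1\sin\theta) + r^3(\gamma_1\cos\theta+\delta_1\sin\theta),
\]
which in Cartesian form is precisely $(x^2+y^2)(Ax+By+C) + ax+by+c$. I do not foresee a serious obstacle: the Almansi decomposition is valid globally because $\mathbb{R}^2$ is star-shaped, the Fourier/power series converge because entire harmonic functions have no singular radial modes, and the only delicate point is noting that the linearity condition on two \emph{distinct} circles is exactly what is needed to kill all higher-frequency Almansi coefficients simultaneously.
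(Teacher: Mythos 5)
Your proof is correct. Both you and the paper start from the same place, an Almansi-type representation of $F$ as (harmonic) $+$ (circle polynomial)$\times$(harmonic) valid on all of $\mathbb{R}^2$ (the paper's Proposition on representation, $F=su_1+u_2$), but the way the two linearity conditions are exploited differs. The paper chooses $s=x^2+y^2-2$ and then rewrites $F=(x^2+y^2-2)u_3+(x^2+y^2-1)u_2$ with $u_3=u_1-u_2$; this decouples the two circles, so that $F|_{S_2}=u_2|_{S_2}$ and $F|_{S_1}=-u_3|_{S_1}$, and each of $u_2,u_3$ is then shown to be linear by the uniqueness theorem for harmonic functions (an entire harmonic function agreeing with a linear one on a circle is that linear function). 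You instead keep the coupled form $F=u+(x^2+y^2)v$ and resolve it by expanding both harmonic pieces in polar Fourier modes, killing every frequency $n\ge 2$ via the nonsingular $2\times 2$ system coming from the two distinct radii. Your route is more computational but entirely self-contained and makes explicit where the hypothesis of \emph{two distinct} circles enters (the nonvanishing determinant $\left|\begin{smallmatrix}1&1\\1&2\end{smallmatrix}\right|$); the paper's route is shorter and avoids series, at the cost of the small algebraic recombination trick. One point worth stating explicitly in your write-up is the justification for termwise identification of Fourier coefficients (uniform convergence of the expansions of entire harmonic functions on compact sets), but this is routine.
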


The function $F(x,y)=(x^2+y^2)\log(x^2+y^2)$ extended by $F(0,0)=0$ might seem to be a counter-example to this lemma but in fact it is not:
$\partial^2 F/\partial x^2$ is discontinuous at the origin.

\begin{proof}
By Proposition~\ref{cl1} below it follows that there are functions $u_1,u_2$, harmonic in $\mathbb{R}^2$,
such that $F=(x^2+y^2-2)u_1+u_2$. Then
$F=(x^2+y^2-2)u_3+(x^2+y^2-1)u_2$, where the function $u_3=u_1-u_2$ is also harmonic in $\mathbb{R}^2$.
Since $u_2$ is harmonic in the whole plane $\mathbb{R}^2$ and the restriction
$u_2\left|_{S_2}\right.=F\left|_{S_2}\right.$ is linear it follows by uniqueness theorem that the function $u_2$ itself is linear.
Analogously $u_3$ is linear and the lemma follows.
%
%
%
\qed\end{proof}

\begin{proposition}\label{cl5} Let
$F(x,y)=(x^2+y^2)(Ax+By+C)+ax+by+c$, where $A^2+B^2\ne 0$. Suppose that the restriction of the function $F$ to a circle $S\subset\mathbb{R}^2$ is linear. Then the center of the circle $S$ is the origin.
\end{proposition}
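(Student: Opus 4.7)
The plan is to work purely at the level of polynomials. Write the circle $S$ as the zero set of the irreducible quadratic polynomial $g(x,y):=(x-p)^2+(y-q)^2-r^2=x^2+y^2-2px-2qy+s$, where $(p,q)$ is the center we want to show is the origin and $s:=p^2+q^2-r^2$. By hypothesis there is a linear function $L(x,y)=\alpha x+\beta y+\gamma$ with $F=L$ on $S$, so the polynomial $F-L$ (of degree $\le 3$) vanishes on the infinite irreducible real curve $g=0$. Since $g$ is irreducible over $\mathbb{C}$, this forces $g\mid(F-L)$, and the quotient has degree at most $1$, so
\[
F(x,y)-L(x,y)=(Ex+Gy+H)\,g(x,y)
\]
for some constants $E,G,H$.

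Next I would compare coefficients degree by degree. The degree-$3$ part of $F$ is $(Ax+By)(x^2+y^2)$, and the degree-$3$ part of the right-hand side is $(Ex+Gy)(x^2+y^2)$, so $E=A$ and $G=B$. The degree-$2$ part of $F$ equals $C(x^2+y^2)$, while the degree-$2$ part of $(Ax+By+H)\,g$ expands to
\[
H(x^2+y^2)-2Apx^2-2(Aq+Bp)xy-2Bqy^2.
\]
Matching coefficients of $x^2$, $y^2$, and $xy$ yields $H-2Ap=C$, $H-2Bq=C$, and $Aq+Bp=0$. Subtracting the first two gives $Ap=Bq$, so $(A,B)$ satisfies the linear system
\[
\begin{pmatrix} p & -q \\ q & p \end{pmatrix}\!\begin{pmatrix} A \\ B \end{pmatrix}=\begin{pmatrix} 0 \\ 0 \end{pmatrix}.
\]
Its determinant is $p^2+q^2$; if $(p,q)\ne(0,0)$ the only solution is $A=B=0$, contradicting $A^2+B^2\ne0$. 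Hence $p=q=0$ and $S$ is centered at the origin.

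I do not expect a serious obstacle: the argument is a short coefficient comparison once divisibility by $g$ is established. The only point worth double-checking is that divisibility step; if a reader prefers not to invoke the Nullstellensatz, one can instead perform Euclidean division of $F-L$ by $g$ regarded as a polynomial in $y$ over $\mathbb{R}[x]$ and observe that the remainder is a polynomial of degree at most $1$ in $y$ that vanishes at the two roots of $g$ for each $x$ with $|x-p|<r$, hence vanishes identically.
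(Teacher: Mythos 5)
Your proof is correct and follows essentially the same route as the paper's: both express the difference between $F$ and the interpolating linear function as a product of a linear polynomial with the circle's quadratic equation, then compare coefficients in degrees $3$ and $2$ to force the center to vanish. The only differences are cosmetic — you justify the divisibility step explicitly via irreducibility of the circle's equation (the paper simply asserts the factorization) and you carry the constant $C$ along rather than normalizing it away first.
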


\begin{proof}
It suffices to consider the case when $a=b=c=C=0$.
Let $x^2+y^2+px+qy+r=0$ be the equation of $S$.
If the restriction $F\left|_S\right.$ is linear then $S$ is a projection of (a part of)
the intersection of the surface $z=F(x,y)$ and a plane $z=\alpha x+\beta y+\gamma $. Thus there exist numbers $k,l,m$ such that
$$
(x^2+y^2)(Ax+By)-(\alpha x+\beta y+\gamma)=
(kx+ly+m)(x^2+y^2+px+qy+r).
$$
We get
$$
k=A, \quad l=B, \quad pk+m=0, \quad ql+m=0, \quad pl+qk=0.
$$
Thus $p=-An$, $q=Bn$, $m=A^2n=-B^2n$ for some $n\in\mathbb{R}$. Since $A^2+B^2\ne 0$ it follows that $n=0$. So $S$ is the circle $x^2+y^2+r=0$.
\qed\end{proof}

\subsection{Biharmonic continuation}

We are going to reduce Theorem~\ref{th1} to Lemmas~\ref{l1}, \ref{l2} and~\ref{l3} by ``biharmonic continuation'' of the function $F$.
We say that a function $F$ biharmonic in a region $U$ \emph{extends} to a function $G$ biharmonic in a region $V$ if there is an open set $D\subset U\cap V$ such that $F=G$ in $D$. Notice that $F$ can be distinct from $G$ in $U\cap V$ if the latter set is disconnected.

\begin{proposition}[Uniqueness of a continuation] \label{c0} If two functions biharmonic in a region $V\subset\mathbb{R}^2$ coincide in a region $U\subset V$ then these functions coincide in the region $V$.
\end{proposition}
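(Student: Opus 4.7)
The plan is to reduce the statement to the identity principle for real-analytic functions. First I would set $H := F - G$; this is biharmonic on $V$ and vanishes identically on the open subset $U \subset V$. It then suffices to show $H \equiv 0$ throughout $V$.

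The key input I will invoke is that every biharmonic function on a planar open set is real-analytic. This is a classical regularity fact: $\Delta^2$ is an elliptic operator with constant (hence real-analytic) coefficients, so Petrowsky's theorem on analytic hypoellipticity applies. If a more elementary route is preferred, I can instead use Almansi's local representation $H = u_1 + (x^2+y^2)\,u_2$ on any simply connected subregion of $V$, where $u_1$ and $u_2$ are harmonic and therefore real parts of holomorphic functions, and deduce real-analyticity of $H$ from that of $u_1$ and $u_2$.

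Finally, since $V$ is connected (being a region) and $H$ is real-analytic on $V$ and vanishes on the nonempty open subset $U$, the standard identity principle for real-analytic functions of two variables forces $H \equiv 0$ on $V$: the set $\{p \in V : \partial^\alpha H(p) = 0 \text{ for every multi-index } \alpha\}$ is open (by the local convergence of the Taylor series), closed (by continuity of all partial derivatives), and contains $U$, hence must coincide with all of $V$. The main and essentially only obstacle is recalling or citing the regularity theorem; the remainder is a one-line application of unique continuation.
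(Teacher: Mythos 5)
Your proof is correct, but it takes a different route from the paper's. The paper does not invoke real-analyticity of biharmonic functions at all: it sets $H=F-G$, observes that $\Delta H$ is harmonic on $V$ and vanishes on $U$, concludes $\Delta H\equiv 0$ on $V$ by unique continuation for \emph{harmonic} functions, and then applies the same harmonic unique continuation a second time to $H$ itself. So the paper bootstraps the biharmonic case from the harmonic case in two steps, whereas you prove analyticity of $H$ directly (via Petrowsky's analytic hypoellipticity, or via the Almansi decomposition --- which is in fact the paper's Proposition~\ref{cl1}, used elsewhere in the argument) and then apply the identity principle for real-analytic functions once. Both arguments are sound; at bottom they rest on the same kind of unique-continuation fact, since the harmonic identity principle is itself usually derived from real-analyticity of harmonic functions. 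The paper's version is shorter and needs only the standard harmonic case as a black box; yours is more self-contained about \emph{why} unique continuation holds (the open-and-closed argument for the zero set of all derivatives) and generalizes immediately to any polyharmonic or constant-coefficient elliptic operator, at the cost of citing a heavier regularity theorem.
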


\begin{proof} Let $F,G$ be functions such that $\Delta^2 F=\Delta^2 G=0$ in $V$ and $F=G$ in $U$. Then $\Delta(F-G)$ is harmonic in $V$ and vanishes in $U$. Thus $\Delta(F-G)=0$ in $V$. Hence $F-G$ is harmonic in $V$ and vanishes in $U$. Thus $F-G=0$ in~$V$.
\qed\end{proof}


\mscomm{Recall that the \emph{envelope} of a family of curves $s(x,y,t)=0$ is the set of all $(x,y)\in\mathbb{R}^2$ such that there exists $t\in I$ such that $\tfrac{\partial}{\partial t}s(x,y,t)=s(x,y,t)=0$. The envelope can be empty.}

\begin{lemma}[Continuation along circles] \label{l4} Let $F:U\to\mathbb{R}$ be a biharmonic function defined in a region $U\subset\mathbb{R}^2$.
Let $S_t$, $t\in I$, be an analytic family of circles in the plane
containing at least two distinct ones.
Suppose that for each $t\in I$ we have $S_t\cap U\ne\emptyset$ and the restriction $F\left|_{S_t\cap U}\right.$ is a restriction of a linear function. Then for some segment $J=[q,r]\subset I$ the function $F$ extends to a function biharmonic in a region bounded by certain arcs of the circles $S_{q}$, $S_{r}$ and possibly two pieces of the envelope of the family $S_t$, $t\in J$.
\end{lemma}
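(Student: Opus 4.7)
The plan is to build the extension $G$ geometrically by ``propagating'' the linear function from each arc $S_t\cap U$ along the whole circle $S_t$, then promote the resulting collection of linear pieces to a single real-analytic function of two variables on a suitable sweep region, and finally conclude biharmonicity by analytic continuation of $\Delta^{2}$.

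First, for each $t\in I$ the arc $S_t\cap U$ is relatively open and nonempty, so (discarding finitely many degenerate parameters by shrinking $I$) it contains three non-collinear points; hence there is a unique linear function $l_t(x,y)=\alpha(t)x+\beta(t)y+\gamma(t)$ with $l_t|_{S_t\cap U}=F|_{S_t\cap U}$. Since $F$ is real-analytic (biharmonic) and the family $t\mapsto S_t$ is analytic, $\alpha,\beta,\gamma$ depend real-analytically on $t$.

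Next, parametrize the circles as $S_t:(x-a(t))^2+(y-b(t))^2=\rho(t)^2$ with $a,b,\rho$ analytic, and let $E$ be the envelope of the family, defined by the condition $\partial_t\!\left[(x-a(t))^2+(y-b(t))^2-\rho(t)^2\right]=0$. Away from $E$ the implicit function theorem yields a local analytic inverse $\tau(x,y)$ to the sweep map, characterized by $(x,y)\in S_{\tau(x,y)}$. Assuming the family is not locally constant (otherwise the conclusion is vacuous, as two circles are distinct), pick a subinterval $J=[q,r]\subset I$ short enough that a connected component $V$ of $\bigl(\bigcup_{t\in J}S_t\bigr)\setminus E$ is ``simple'' in the following sense: each point of $V$ lies on exactly one circle $S_t$ with $t\in J$, and the boundary of $V$ consists of an arc of $S_q$, an arc of $S_r$, and at most two arcs of $E$. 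Such a $J$ exists by local analysis near any regular value of the parametrization of the family. Now define
\[
G(x,y) := l_{\tau(x,y)}(x,y) = \alpha(\tau(x,y))\,x+\beta(\tau(x,y))\,y+\gamma(\tau(x,y)).
\]
By construction $G$ is real-analytic on $V$, and on the nonempty open set $U\cap V$ we have $G(x,y)=l_{\tau(x,y)}(x,y)=F(x,y)$ since $F|_{S_t\cap U}=l_t$. Because $F$ is biharmonic, the analytic function $\Delta^{2}G$ vanishes on $U\cap V$, hence on the whole connected set $V$ by the identity principle for real-analytic functions. Thus $G$ is a biharmonic extension of $F$ to the required region, and by Proposition~\ref{c0} it is unique.

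The principal obstacle is the geometric step of choosing $J$ and $V$ so that $\tau$ is single-valued. Beyond the envelope $E$ a generic point lies on two distinct circles $S_{t_1}, S_{t_2}$, yielding conflicting candidate values $l_{t_1}(x,y)\ne l_{t_2}(x,y)$ for the extension; this is exactly why the statement permits $\partial V$ to contain pieces of $E$ and cannot promise more. Crossing the envelope is outside the reach of this ``continuation along circles'' technique and is precisely what the subsequent ``continuation across circles'' step, via the new symmetry principle for biharmonic functions, is designed to accomplish.
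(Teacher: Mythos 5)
Your proposal is correct and follows essentially the same route as the paper: extend the linear functions along each circle over a sweep region on one side of the envelope, use the circle-parameter/sweep coordinates (your $\tau$, the paper's $(t,\phi)$) to see the extension is real-analytic and agrees with $F$ on an open set, and conclude $\Delta^2G=0$ by the identity principle for real-analytic functions. The only place you are slightly more glib than the paper is in ensuring the chosen component $V$ actually meets $U$ (the paper handles this by possibly replacing each arc $R_t$ by its complementary arc $S_t-R_t$), but the idea is the same.
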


\begin{proof}
The idea of the proof is to extend the function linearly along the circles until we reach the envelope. \mscomm{For that we show that the envelope cuts a family of disjoint arcs on the circles. The resulting} function will be a real analytic continuation of the initial function and hence it will be biharmonic. \mscomm{Hereafter all assertions hold for some segment $J\subset I$ possibly smaller than $I$.}

\mscomm{Let us recall general structure of the envelope of an analytic family. For an appropriate segment $J_1\subset I$, each connected component of the envelope $E$ of the family $S_t$, $t\in J_1$, is either an isolated point common to all $S_t$ or a regular analytic curve tangent to all $S_t$ at pairwise distinct points and having no other common points with $S_t$. Indeed, let $s(x,y,t):=x^2+y^2+a(t)x+b(t)y+c(t)=0$ be the equation of the circle $S_t$ (the argument does not actually rely on the particular form of the equation). For each $n=1,2,\dots$ consider the subset $\Sigma_n$ of $\mathbb{R}^2\times I$ given by
$s(x,y,t)=\tfrac{\partial}{\partial t}s(x,y,t)=\dots=\tfrac{\partial^n}{\partial t^n}s(x,y,t)=0$ but $\tfrac{\partial^{n+1}}{\partial t^{n+1}}s(x,y,t)\ne 0$, and also the subset $\Sigma_\infty$ given by $\tfrac{\partial^n}{\partial t^n}s(x,y,t)=0$ for all $n$. The projection of the union $\Sigma$ of all these subsets onto the $xy$-plane is the envelope $E$. By the analyticity, we may restrict to a subsegment $J_1\subset I$ so that each of these subsets is either empty or a collection of the whole connected components of $\Sigma$. Moreover, we may assume that the connected components are not contained in the planes of the form $\mathbb{R}^2\times \{t\}$, and their projections are disjoint. First take a point $(x,y,t)\in\Sigma_\infty$. Then by the analyticity $s(x,y,t)=0$ for all $t$. Hence $(x,y)$ is a point of the envelope common to all $S_t$ (which we may assume to be isolated). Now take a point $(x,y,t)\in\Sigma_n$ and consider the map $G(x,y,t):= \left(s(x,y,t),\tfrac{\partial^n}{\partial t^n}s(x,y,t)\right)$; this generalizes the argument  from~\cite[\S5.21]{bruce-giblin:1984}, where $n=1$. Then the connected component of the envelope containing $(x,y)$ is a subset of the projection of $G^{-1}(0,0)$ into the $xy$-plane. Since $\tfrac{\partial s}{\partial t}=0$, $\tfrac{\partial^{n+1} s}{\partial t^{n+1}}\ne 0$, and $\left(\tfrac{\partial s}{\partial x},\tfrac{\partial s}{\partial y}\right)\ne (0,0)$, it follows that the differential $dG$ is surjective. Then by the Implicit Function Theorem, the intersection of $G^{-1}(0,0)$ with a neighborhood of $(x,y,t)$ is a regular analytic curve with the tangential direction $(dx,dy,dt)$  given by $\tfrac{\partial s}{\partial t}dt+\tfrac{\partial s}{\partial x}dx+\tfrac{\partial s}{\partial y}dy=\tfrac{\partial^{n+1} s}{\partial t^{n+1}}dt+\tfrac{\partial^{n+1} s}{\partial t^{n}\partial x}dx+\tfrac{\partial^{n+1} s}{\partial t^{n}\partial y}dy=0$; cf.~\cite[Proof of Proposition~5.25]{bruce-giblin:1984}. Since $\tfrac{\partial s}{\partial t}=0$ and $\tfrac{\partial^{n+1}s}{\partial t^{n+1}}\ne 0$, it follows that the projection of the curve is a regular analytic curve tangent to $S_t$. Since no component of $\Sigma_n$ is contained in the plane $\mathbb{R}^2\times \{t\}$, it follows that the projection is tangent to $S_t$ for each  $t\in J_1$ and has no other common points with $S_t$, for sufficiently small $J_1\subset I$.}

\mscomm{By the analyticity, there is $J_2\subset J_1$ such that each connected component of the envelope $E$ (not an isolated point) has constant contact order $n$ (which may depend on the component and need not equal the above number $n$) with $S_t$ for each $t\in J_2$. We have $n=1$ or $n=2$, because if a curve had contact of order $\ge 3$ with a circle $S_t$ at each point, then it would have constant curvature and all $S_t$ would coincide.}


\mscomm{
There is $J_3\subset J_2$ such that each circle $S_t$, $t\in J_3$, has at most $2$ common points with} the envelope $E$, depending on the arrangement of the circles sufficiently close to $S_t$.
\mscomm{Indeed, on the envelope, we have
$s(x,y,t):=x^2+y^2+a(t)x+b(t)y+c(t)=0$ and
$\tfrac{\partial}{\partial t}s(x,y,t)=a'(t)x+b'(t)y+c'(t)=0$. For fixed $t$, the two equations have at most two common solutions, unless $a'(t)=b'(t)=c'(t)=0$. This cannot hold identically for each $t$ because all the circles $S_t$ coincide otherwise. Thus by the analyticity there is smaller interval $J_3\subset J_1$ where at least one of the derivatives $a'(t),b'(t),c'(t)$ does not vanish, hence each circle $S_t$ has at most $2$ common points with $E$.}

Let $R_t\subset S_t$ be one of the open arcs joining the \mscomm{common} points of the circle $S_t$ and the envelope $E$. Let $R_t$ be one of the sets $S_t-O_t$ or $\emptyset$ (respectively, $R_t=S_t$ or $\emptyset$), if there is a unique such \mscomm{common} point $O_t$ (respectively, no \mscomm{common} points). Choose the arcs $R_t$ so that they form a continuous family.

One can assume that for each $t$ in a segment \mscomm{$J_4\subset J_3$} we have $R_t\cap U\ne \emptyset$. Indeed, if $R_t\cap U\ne \emptyset$ for at least one \mscomm{$t\in J_3$} then the same condition holds in a neighborhood \mscomm{$J_4$} of $t$. Otherwise replace each $R_t$ by $S_t-\mathrm{Cl}\,R_t$ and repeat the argument.

\mscomm{Then} there is a segment \mscomm{$[q,r]=J\subset J_4$} such that the arcs $R_t$, $t\in J$, are pairwise disjoint. \mscomm{Indeed, for contact order $n=2$ this follows from the Tait--Kneser theorem: all $S_t$ are disjoint as osculating circles of a plane curve $E$. For $n=1$, the circles $S_t$ are locally from one side with respect to the envelope $E$, hence the arcs $R_t$ have no intersection points in a neighborhood of the envelope, and by \cite[\S5.9]{bruce-giblin:1984} they have no intersection points outside the neighborhood for sufficiently small $J\subset I$.} 

So $V=\bigcup_{t\in J}R_t$ is a region bounded by the arcs $R_{q}$, $R_{r}$ and possibly two pieces of the envelope $E$.

By the assumption of the lemma the restriction $F\left|_{R_t\cap U}\right.$ is the restriction of a linear function for each $t\in J$. Extend the function $F$ linearly to each arc $R_t$. We get a function defined in the whole region $V$. It remains to prove that the obtained function is biharmonic in $V$.

Let us show that $F$ is real analytic in $V$.
Parametrize the arc $R_t$ by the functions $x(t,\phi)=x_0(t)+r(t)\cos\phi$, $y(t,\phi)=y_0(t)+r(t)\sin\phi$.
Consider $(t,\phi)$ as coordinates in $V$. Since the family $S_t$ is analytic, \mscomm{by \cite[Proposition~5.20]{bruce-giblin:1984}} it follows that these coordinates are analytic.
Without loss of generality assume $[q,r]\times [\alpha,\beta]\subset U$ for some $\alpha,\beta\in[-\pi,\pi]$. Then $F(t,\phi)$ is real analytic in $[q,r]\times [\alpha,\beta]$.
By the construction $F(t,\phi)=a(t)\cos\phi+b(t)\sin\phi+c(t)$ in the region $V$ for some functions $a(t),b(t),c(t)$.
Thus $a(t),b(t), c(t)$ are real analytic in $[q,r]$. Hence $F$ is real analytic in the whole region $V$.

Then the function $\Delta^2 F$ is also real analytic in the region $V$ and vanishes in the open set $U\cap V$.
By the uniqueness theorem for analytic functions it follows that $\Delta^2 F=0$ in the whole region $V$, i.~e., $F$ is biharmonic in $V$.
\qed\end{proof}

To extend the function $F$ further we need the following preparations.

\begin{proposition}[Representation] \label{cl1} \textup{\cite{balk:1970}} Let $s(x,y)=x^2+y^2+ax+by+c$. Then any function $F$ biharmonic in a simply-connected region $U\subset\mathbb{R}^2$ can be represented as  $F=su_1+u_2$
for some functions $u_1,u_2$ harmonic in~$U$.
\end{proposition}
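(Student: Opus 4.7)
The plan is to reduce this to the classical Goursat/Almansi representation of biharmonic functions via elementary manipulations. Completing the square,
$$
s(x,y) = (x-x_0)^2 + (y-y_0)^2 + c', \qquad x_0=-a/2,\ y_0=-b/2,\ c'=c-(a^2+b^2)/4.
$$
Setting $z_0:=x_0+iy_0$, suppose we manage to produce a decomposition $F = |z-z_0|^2\, w_1 + w_2$ with $w_1,w_2$ harmonic in $U$. Then
$$F = s\, w_1 + (w_2 - c'\, w_1)$$
has the required form, because $w_2 - c'\, w_1$ is again harmonic. So it is enough to prove the representation with $s$ replaced by $|z-z_0|^2$, and translating coordinates further reduces to the case $z_0 = 0$, i.e.\ $s = x^2+y^2 = |z|^2$.

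For this reduced problem I would invoke the Goursat representation: since $U$ is simply connected, there exist holomorphic functions $\phi,\chi$ on $U$ with
$$
F(x,y) = \operatorname{Re}\bigl(\,\overline{z}\,\phi(z) + \chi(z)\,\bigr).
$$
Split into two cases depending on whether the origin lies in $U$. If $0\in U$, Taylor's theorem gives $\phi(z) = \phi(0) + z\,\eta(z)$ with $\eta$ holomorphic on $U$, hence
$$
\overline{z}\,\phi(z) = \overline{z}\,\phi(0) + |z|^2\,\eta(z),
$$
and taking real parts yields $F = |z|^2\,\operatorname{Re}\eta(z) + \operatorname{Re}\bigl(\overline{z}\,\phi(0) + \chi(z)\bigr)$, whose remainder is harmonic (a linear function plus $\operatorname{Re}\chi$). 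If $0\notin U$, set $\eta(z):=\phi(z)/z$, which is holomorphic on the simply-connected set $U$; then $\overline{z}\,\phi(z) = |z|^2\,\eta(z)$, so $F = |z|^2\,\operatorname{Re}\eta(z) + \operatorname{Re}\chi(z)$ directly.

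In either case we obtain the desired decomposition $F = |z|^2 u_1 + u_2$ with $u_1 = \operatorname{Re}\eta$ and $u_2$ harmonic, and undoing the translation and constant shift returns $F = s\,u_1 + u_2$ in the original coordinates. The only non-trivial ingredient is the Goursat representation, which is classical for simply-connected planar domains and is in any case the content of the cited reference \cite{balk:1970}; everything else is bookkeeping, so I do not anticipate any real obstacle.
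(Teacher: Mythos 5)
Your proof is correct. Note, however, that the paper itself does not prove this proposition at all: it is stated with a citation to \cite{balk:1970} and treated as a known result (an Almansi/Goursat-type representation relative to the circle $s=0$), so there is no in-paper argument to compare against. What you supply is a clean, self-contained derivation: the reduction of $s=x^2+y^2+ax+by+c$ to $|z-z_0|^2$ and then to $|z|^2$ by absorbing the constant $c'$ into the harmonic remainder is exactly the right bookkeeping, and the two-case factoring of $\phi$ (peeling off $\phi(0)$ when $0\in U$, dividing by $z$ outright when $0\notin U$) correctly produces $\overline{z}\,\phi(z)=|z|^2\eta(z)+(\text{linear})$ with $\eta$ holomorphic, so that $u_1=\operatorname{Re}\eta$ and the remainder are harmonic. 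The only external input is the Goursat representation $F=\operatorname{Re}(\overline{z}\,\phi+\chi)$, which is classical on simply-connected domains (simple-connectivity is what guarantees single-valued $\phi$ and $\chi$); everything else is elementary. One cosmetic remark: in the case $0\notin U$ you do not actually need simple-connectivity to divide by $z$ — openness of $U$ and $0\notin U$ suffice — the hypothesis is consumed entirely by the Goursat step. Your argument would serve perfectly well as an inline proof if the authors wished to make the paper self-contained.
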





\begin{proposition}[Arc extension] \label{cl2} Let $S\subset R$ be a pair of circular arcs.
Let $F$ be a biharmonic function defined in a neighborhood of the arc $R$. Suppose that
$F\left|_{S}\right.$ is a restriction of a linear function. Then $F\left|_{R}\right.$
is the restriction of the same linear function.
\end{proposition}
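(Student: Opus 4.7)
The plan is to reduce the statement to the identity theorem for real analytic functions of a single real variable. The key fact is that any biharmonic function is real analytic: by Proposition~\ref{cl1}, in a simply connected neighborhood of $R$ we may write $F = s u_1 + u_2$ with $u_1, u_2$ harmonic, and harmonic functions are classically real analytic. Let $L$ denote the linear function that agrees with $F$ on $S$. Then $F - L$ is real analytic in a planar neighborhood of $R$.

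Next I would parametrize the circle carrying $R$ by $\gamma(\phi) = (x_0 + r\cos\phi,\, y_0 + r\sin\phi)$, so that $R$ corresponds to an open interval $J \subset \mathbb{R}$ of values of $\phi$, and the subarc $S$ corresponds to an open sub-interval $J_0 \subset J$. Since both $\gamma$ and $F-L$ are real analytic, the composition $g(\phi) := (F - L)(\gamma(\phi))$ is real analytic on $J$. By hypothesis $g$ vanishes identically on $J_0$.

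Because $J_0$ is a nondegenerate open interval, every one of its points is an accumulation point of zeros of $g$, so the one-variable identity theorem forces $g \equiv 0$ on all of $J$. Reading this back through the parametrization $\gamma$ gives $F = L$ on the entire arc $R$, as claimed. I do not expect any substantial obstacle: the only point requiring a moment of care is that real analyticity of $F$ in a planar neighborhood of $R$ (so that the restriction to the analytic curve $R$ is also analytic) must be justified, and this is precisely what Proposition~\ref{cl1} together with analyticity of harmonic functions delivers.
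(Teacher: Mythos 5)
Your argument is correct, but it takes a genuinely different route from the paper. You restrict $F-L$ to the circle, observe that this restriction is a real analytic function of the angular parameter (because biharmonic functions are real analytic, which you correctly extract from Proposition~\ref{cl1} and the analyticity of harmonic functions), and then invoke the one-variable identity theorem: vanishing on the subinterval corresponding to $S$ forces vanishing on the whole interval corresponding to $R$. The paper instead writes $F-l=su_1+u_2$ with $s$ the normalized equation of the circle carrying both arcs, notes that $u_2$ vanishes on $S$ because $s$ does, and applies the Schwarz symmetry principle for harmonic functions to conclude $u_2=-u_2\circ r$ throughout a neighborhood, whence $u_2$ vanishes on the fixed-point set of the reflection, i.e., on all of $R$. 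Both proofs ultimately rest on unique continuation; yours is the more elementary and self-contained of the two, needing nothing beyond real analyticity along an analytic curve, while the paper's version exercises the reflection machinery that it develops further in Lemma~\ref{cl4} (the double symmetry principle), so the two proofs of this small proposition are written in deliberately parallel styles. The only point to keep an eye on in your version is the one you already flag: if $R$ were a full circle a tubular neighborhood would not be simply connected, but since real analyticity is a local property (and the identity theorem only needs analyticity of $g$ on the connected interval $J$), this causes no difficulty.
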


\begin{proof} Let $l$ be the linear function $F\left|_{S}\right.$.
Let $s(x,y)=0$ be the normalized equation of the circle containing the arc $S$.
By Proposition~\ref{cl1} we have $F-l=su_1+u_2$ for some functions $u_1,u_2$ harmonic in a neighborhood $U$ of the arc $R$.
Then $u_2\left|_{S}\right.=(F-l)\left|_{S}\right.=0$. By the symmetry principle for harmonic functions it follows that
$u_2(x,y)=-u_2(x',y')$ for any pair of points $(x,y),(x',y')\in U$ symmetric with respect to the circle $s(x,y)=0$.
In particular, $u_2\left|_{R}\right.=0$. Thus $F\left|_{R}\right.=l$.
\qed\end{proof}

Now we are going to give a version of a symmetry principle for biharmonic functions. The usual symmetry principle \cite{poritzki:1946,duffin:1955} is not applicable in our situation because we have no information on the growth of the function in the normal directions to the circles.

We use the following notation. Let $S_s$ and $S_t$ be a pair of circles.
Denote by $O_t$ the center of the circle $S_t$ and by $r_t:\mathbb{R}^2-\{ O_t\}\to \mathbb{R}^2-\{ O_t\}$ the reflection with respect to the circle $S_t$. Let $r_t(U)$ be a shorthand for $r_t(U-\{O_t\})$. Denote by $\Sigma_{st}$ the \emph{limit set} of the pencil of circles passing through $S_s$ and $S_t$, i.~e.,
$\Sigma_{st}=\{\,x\in\mathbb{R}^2\,:\, r_s(x)=r_t(x) \, \}$. If $S_s\ne S_t$ then the limit set consists of at most $2$ points.

The following technical definition is required to keep the function single-valued
during the continuation process with our symmetry principle.

\begin{definition}[Nicely arranged region] \label{def-nicelyaranged}
A region $U\subset\mathbb{R}^2$ is \emph{nicely arranged} with respect to two circles $S_1$ and $S_2$, if the set $U\cap r_1(U)\cap r_2(U)-\Sigma_{12}$ has a connected component $D$ such that $S_1\cap D\ne\emptyset$ and $S_2\cap D\ne\emptyset$.
\end{definition}

Notice that an arbitrary region is nicely arranged with respect to any pair of sufficiently close circles intersecting the region.

\begin{lemma}[Double symmetry principle] \label{cl4} Let $F$ be a function biharmonic in a simply-connected region $U\subset\mathbb{R}^2$ nicely arranged with respect to a pair of circles $S_1\ne S_2$. Suppose that for each $t=1,2$
the restriction $F\left|_{S_t\cap U}\right.$ is a restriction of a linear function.
Then $F$ extends to a
function biharmonic in the open set $r_1(U)\cap r_2(U)-\Sigma_{12}$. 
\end{lemma}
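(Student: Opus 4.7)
The plan is to reduce the biharmonic extension problem to the classical Schwarz reflection principle for harmonic functions, via the representation of Proposition~\ref{cl1}, and then to patch the two single-circle extensions into a biharmonic function on $r_1(U)\cap r_2(U)-\Sigma_{12}$ using the nicely arranged hypothesis together with the uniqueness of biharmonic continuation (Proposition~\ref{c0}).

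First I will apply Proposition~\ref{cl1} twice (using the simple connectedness of $U$) to write, for each $t=1,2$, a decomposition
\[ F - l_t = s_t u_t + v_t \]
with $u_t, v_t$ harmonic in $U$, where $s_t=0$ is the normalized equation of $S_t$ and $l_t$ is the linear function agreeing with $F$ on $S_t\cap U$. The hypothesis forces $v_t|_{S_t\cap U}=0$. I will then construct a biharmonic extension $\hat F_t$ of $F$ across $S_t$: the piece $v_t$, vanishing on $S_t$, admits a harmonic extension to $r_t(U)$ by classical Schwarz reflection, namely $-v_t\circ r_t$; composing $u_t$ with the anti-conformal inversion $r_t$ yields a harmonic function $u_t\circ r_t$ on $r_t(U)$ (note $O_t\notin r_t(U)$); and the combination $\hat F_t := l_t + s_t\,(u_t\circ r_t) - v_t\circ r_t$ on $r_t(U)$ must be verified to be biharmonic and to glue with $F$ along $S_t$ to the four orders of normal derivatives required by the biharmonic equation. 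Biharmonicity follows from the identity $\Delta(s h) = 4h + 2\nabla s\cdot\nabla h$ being harmonic whenever $h$ is.

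Next I will use the nicely arranged hypothesis to produce a nonempty open connected set $D\subset U\cap r_1(U)\cap r_2(U)-\Sigma_{12}$ meeting both $S_1$ and $S_2$. On $D$, the single-circle extensions $\hat F_1$ and $\hat F_2$ both agree with $F$, hence with each other. Since both are biharmonic on $r_1(U)\cap r_2(U)-\Sigma_{12}$, Proposition~\ref{c0} propagates the equality $\hat F_1=\hat F_2$ to the connected component containing $D$; the removal of the limit set $\Sigma_{12}$ (where $r_1$ and $r_2$ coincide) is precisely what is needed to guarantee connectedness through the pencil generated by $S_1, S_2$. Setting $F^*:=\hat F_1=\hat F_2$ on this component then yields the sought biharmonic extension.

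The main obstacle will be the single-circle construction in the previous paragraph. Classical biharmonic reflection principles (Poritzki~\cite{poritzki:1946}, Duffin~\cite{duffin:1955}) require both vanishing value and vanishing normal derivative on the reflecting circle, whereas here only a Dirichlet-type condition (that $F$ is linear on $S_t$) is available. The decomposition of Proposition~\ref{cl1} is the essential new device: it isolates the Dirichlet datum in a single harmonic function $v_t$ that does vanish on $S_t$ and hence submits to classical Schwarz reflection, while the complementary piece $s_t u_t$ is adapted to $S_t$ through its prefactor $s_t$ and can be reflected via the Kelvin-type conformal invariance of harmonic functions in the plane. Carefully computing that the assembled $\hat F_t$ matches $F$ to the required orders of normal derivatives along $S_t$, so that the two halves glue into a single biharmonic function, is the technical heart of the new symmetry principle announced in the paper.
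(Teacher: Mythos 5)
Your reduction of the $v_t$ piece to the classical Schwarz reflection is fine (it is exactly what the paper does with its harmonic summand that vanishes on $S_1$), but the single-circle extension of the other piece is where the argument breaks down, and it cannot be repaired within a one-circle framework. The function $\hat F_t = l_t + s_t\,(u_t\circ r_t) - v_t\circ r_t$ agrees with $F$ on $S_t$ only to first order, not to the order needed for biharmonicity. Indeed,
\[
F-\hat F_t \;=\; s_t\,(u_t-u_t\circ r_t) \;+\; (v_t+v_t\circ r_t);
\]
the second summand vanishes identically near $S_t$ by the Schwarz principle, but the first is a product of two factors each vanishing to first order on $S_t$, so generically it vanishes there only to second order and its second normal derivative jumps across $S_t$. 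Concretely, take $S_1:\ x^2+y^2=1$ and $F=(x^2+y^2-1)x$, which is entire biharmonic and vanishes on $S_1$; with $u_1=x$, $v_1=0$, $l_1=0$ your recipe returns $\hat F_1=(x^2+y^2-1)\,x/(x^2+y^2)$, which differs from the true continuation (namely $F$ itself, unique by Proposition~\ref{c0}) on every open set. So $\hat F_t$ is not an extension of $F$ in the sense the paper requires (coincidence on an open set), the glued function is $C^1$ but not $C^2$ across $S_t$, and the subsequent patching step has nothing to patch.

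The deeper point, around which the paper's proof is built, is that a Dirichlet-type datum on a single circle does not determine the continuation of the coefficient $u_t$ of $s_t$ at all; this is precisely why the lemma needs two circles and why the excluded set $\Sigma_{12}$ appears. In the paper, only the harmonic part vanishing on $S_1$ is reflected classically; the linearity of $F$ on the \emph{second} circle $S_2$ is then converted into a functional equation (formula~(\ref{formula})) for the analytic completion $\nu_2$ of the coefficient of $s_1$, and solving that equation requires dividing by $z\overline{\rho_2(\bar z)}-1$, whose zero set is exactly $\Sigma_{12}$. Your plan uses the condition on the second circle only to glue two independent one-circle extensions, so it never produces the interaction between $S_1$ and $S_2$ that the conclusion's domain $r_1(U)\cap r_2(U)-\Sigma_{12}$ encodes; the fact that your construction would, if correct, prove a one-circle reflection principle (making the second circle and $\Sigma_{12}$ superfluous) is itself a warning sign.
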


\begin{proof} Let $l_t$ be the linear function $F\left|_{S_t}\right.$ for $t=1,2$.
Without loss of generality assume that $S_1$ is the unit circle $x^2+y^2=1$.
By Proposition~\ref{cl1} it follows that $F=(x^2+y^2-1)u_2+u_1+l_1$ for some functions $u_1$ and $u_2$ harmonic in~$U$.

Take functions $\nu_1(z)$ and $\nu_2(z)$ complex analytic in $U$ such that $u_t=\nu_t(z)+\overline{\nu_t(z)}$ for $t=1,2$. Since $U$ is simply-connected it follows that $\nu_1(z)$ and $\nu_2(z)$ are single-valued.
Let $\lambda_1(z)$ and $\lambda_2(z)$ be linear functions such that $l_t=\lambda_t(z)+\overline{\lambda_t(z)}$ for $t=1,2$.
For $t=1,2$ represent the reflection with respect to the circle $S_t$ as a map $z\mapsto \rho_t(\bar z)$
for a fractional linear function $\rho_t(z)$.


Let us extend the function $u_1$ to the open set $r_1(U)$. (What we do is the usual symmetry principle.)
Let $D$ be the open set from Definition~\ref{def-nicelyaranged}.
For each $z\in S_1$ we have $z=\rho_1(\bar z)$. Thus the condition
$F\left|_{S_1}\right.=l_1$ is equivalent to
\begin{equation}\label{simple}
\nu_1(z)=-\overline{\nu_1(\rho_1(\bar z))}
\end{equation}
for each $z\in S_1\cap D$. Both sides of formula~(\ref{simple}) are complex analytic functions in $D$. By the uniqueness theorem it follows that these functions coincide in $D$.
Thus formula~(\ref{simple}) defines an extension of the function $\nu_1(z)$ to the open set $r_1(U)$.
So $u_1=\nu_1(z)+\overline{\nu_1(z)}$ is the required extension of the function $u_1$.

Let us extend the function $u_2$ to the open set $r_1(U)\cap r_2(U)-\Sigma_{12}$.
For each $z\in S_2$ we have $z=\rho_2(\bar z)$.
For each $z\in D$ formula~(\ref{simple}) holds by the previous paragraph.
Thus for each $z\in S_2\cap D$ the condition $F\left|_{S_2}\right.=l_2$ is equivalent to the condition
\begin{multline}\label{formula}
\nu_2(z)=-\overline{\nu_2(\rho_2(\bar z))} +
\frac{\overline{\nu_1(\rho_1(\bar z))}-\overline{\nu_1(\rho_2(\bar z))}
-\lambda_1(z)-\overline{\lambda_1(\rho_2(\bar z))}+\lambda_2(z)+\overline{\lambda_2(\rho_2(\bar z))}}
{z\overline {\rho_2(\bar z)}-1}.
\end{multline}
Since both sides of formula~(\ref{formula}) are complex analytic functions in $D$ it follows that these functions coincide in $D$. If $z\in r_1(U)\cap r_2(U)$ then $\rho_1(\bar z),\rho_2(\bar z)\in U$. Thus the right-hand side of formula~(\ref{formula}) defines a function complex analytic in $r_1(U)\cap r_2(U)-\Sigma_{12}$ because the denominator may vanish only in $\Sigma_{12}$. Extend the function $\nu_2(z)$ to the open set $r_1(U)\cap r_2(U)-\Sigma_{12}$ by formula~(\ref{formula}).
Then $u_2(z)=\nu_2(z)+\overline{\nu_2(z)}$ is the required extension of the function $u_2$.

Since both functions $u_1$ and $u_2$ extend to $r_1(U)\cap r_2(U)-\Sigma_{12}$ it follows that
$F=(x^2+y^2-1)u_2+u_1+l_1$ also extends to $r_1(U)\cap r_2(U)-\Sigma_{12}$.
\qed\end{proof}

\begin{lemma}[Continuation across nested circles] \label{l5}
Let $S_t$, $t\in I$, be a family of nested circles in the plane distinct from a pencil of circles.
Let $F:U\to\mathbb{R}$ be a function biharmonic in the ring $U$ between $S_0$ and $S_1$.
Suppose that for each $t\in I$ the restriction $F\left|_{S_t\cap U}\right.$ is a restriction of a linear function. Then the function $F$ extends to a function biharmonic in the whole plane $\mathbb{R}^2$.
\end{lemma}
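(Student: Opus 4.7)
My plan is to iterate the Double Symmetry Principle (Lemma~\ref{cl4}) with carefully chosen pairs of circles from the family $\{S_t\}$, each application extending $F$ to a strictly larger biharmonic domain, until the domain fills out all of $\mathbb{R}^2$. The non-pencil hypothesis is precisely what allows this iteration to spread genuinely across the plane rather than merely improve $F$ within a single coaxial system.

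First I would fix a simply-connected subregion $W\subset U$, for instance the ring $U$ with a radial slit removed, so that Proposition~\ref{cl1} and the double symmetry principle both apply on $W$. Next I would pick two circles $S_s,S_t$ with $0<s<t<1$ in the interior of the ring, close enough to one another that the remark following Definition~\ref{def-nicelyaranged} guarantees $W$ is nicely arranged with respect to them. Lemma~\ref{cl4} then extends $F$ to a biharmonic function on $r_s(W)\cap r_t(W)-\Sigma_{st}$. A direct inspection of the reflected regions (easy to verify in model cases where $S_s$ and $S_t$ are concentric with a common axis of symmetry, and then extended by continuity to nearby configurations) shows that the extension protrudes strictly beyond both boundary circles $S_0$ and $S_1$ of the original ring.

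I would then iterate: on enlarged simply-connected patches inside the new domain I would reapply Lemma~\ref{cl4} with other pairs of circles from the family, and repeat. Uniqueness of biharmonic continuation (Proposition~\ref{c0}) ensures that the various extensions agree on their overlaps and glue to a single-valued function. The non-pencil hypothesis furnishes three circles $S_a,S_b,S_c$ in the family whose reflections generate a group preserving no single coaxial pencil, so successive applications sweep out $\mathbb{R}^2$ minus the closed set of points contained in the various limit sets $\Sigma_{st}$ encountered along the way.

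The principal obstacle is to dispose of these exceptional points. Near any such point $p$, the hypothesis that $F$ restrict to a linear function on every circle $S_u$ passing near $p$, with coefficients depending analytically on $u$, forces $F$ to remain bounded in a punctured neighborhood of $p$. A removable singularity argument for biharmonic functions (via a localized version of Proposition~\ref{cl1}, decomposing $F$ into harmonic pieces on slit disks around $p$ and invoking the classical theorem that bounded isolated singularities of harmonic functions are removable) then extends $F$ biharmonically across $p$. Performing this removal at each remaining exceptional point yields the desired biharmonic function on the whole plane $\mathbb{R}^2$.
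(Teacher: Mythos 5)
Your overall strategy --- iterating the double symmetry principle (Lemma~\ref{cl4}) on pairs of nearby circles of the family so as to spread the domain of biharmonicity over the plane --- is the same as the paper's, and the first part of your argument is essentially sound, modulo two smaller inaccuracies: a single application with $s,t$ near one boundary circle extends the ring only inward \emph{or} only outward (for nearly concentric circles $r_s(U)$ lies entirely on one side of $S_s$), not ``strictly beyond both boundary circles''; and the single-valuedness of the continuation around the annulus is a monodromy question that Proposition~\ref{c0} alone does not settle --- the paper uses two disjoint slits and the observation that continuation along the closed path $S_0$ returns to the initial value. The genuine gap is in your treatment of the exceptional points, i.e.\ the limit sets $\Sigma_{st}$. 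First, your boundedness claim is unsupported: for a nested family the limit points of the pencil generated by $S_0$ and $S_1$ lie strictly inside the innermost circle and strictly outside the outermost one, so \emph{no} circle of the family passes near them, and the hypothesis of linearity on the $S_u$ gives no direct control of $F$ there. Second, and more seriously, the removable singularity theorem you invoke is false for biharmonic functions: $x^2/(x^2+y^2)$ is biharmonic and bounded in the punctured plane but admits no biharmonic (indeed no continuous) extension across the origin. This is not a technicality --- exactly such terms occur in the exceptional function~(\ref{eq-th1}) of the Pencil Theorem, so any proof of Lemma~\ref{l5} must genuinely exclude them at the limit points rather than dismiss them by boundedness.

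The paper closes this gap by using the non-pencil hypothesis a second time: since $S_t$, $t\in I$, is not a pencil, there is $p\in I$ with $\Sigma_{0p}\cap\Sigma_{01}=\emptyset$; running the reflection process once for the pair $(S_0,S_1)$ and once for $(S_0,S_p)$ yields biharmonic extensions on $\mathbb{R}^2-\Sigma_{01}$ and on $\mathbb{R}^2-\Sigma_{0p}$, which agree where both are defined and together cover all of $\mathbb{R}^2$. You already have the ingredients for this in your third paragraph (circles of the family not belonging to a single coaxial system); what is missing is the observation that the exceptional sets produced by distinct pairs are disjoint, so that no removable-singularity argument is needed at all.
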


\begin{proof} The idea of the proof is to extend the function, using the double symmetry principle,
to the ring between $r_0(S_1)$ and $S_1$, then to the ring between $r_0(S_1)$ and $r_1r_0(S_1)$, and so on.

Take any pair of circles $S_t$ and $S_s$, where $s,t\in I$ are sufficiently close to $0$.
Draw disjoint slits $T$ and $T'$ such that the regions $U-T$ and $U-T'$ are simply-connected.
By Lemma~\ref{cl4} the function $F$ extends to both $r_t(U-T)\cap r_s(U-T)$ and $r_t(U-T')\cap r_s(U-T')$.
Thus it extends to a (possibly multi-valued) function biharmonic in the ring $r_t(U)\cap r_s(U)\cup U$.
The latter function is single-valued because a continuation along the closed path $S_0$ leads to the initial value. Approaching $t,s\to 0$ one can extend the function $F$ to the ring between the circles $r_0(S_1)$ and $S_1$. Now approaching $t,s\to 1$ one can extend the function $F$ to the larger ring between the circles $r_0(S_1)$ and $r_1r_0(S_1)$. Continuing this process one extends $F$ to a function biharmonic in $\mathbb{R}^2$ except the limit set $\Sigma_{01}$ of the pencil of circles passing through $S_0$ and $S_1$.

Since $S_t$, $t\in I$, is not a pencil of circles it follows that $\Sigma_{0p}\cap\Sigma_{01}=\emptyset$ for some $p\in I$. Repeating the above reflection process for the pair of circles $S_0$ and $S_p$ one extends the function $F$ to a function biharmonic in the whole plane~$\mathbb{R}^2$.
\qed\end{proof}

\mscomm{Notice that even nested circles might have nonempty envelope: e.g., for the family of osculating circles of an analytic curve, the envelope contains the curve itself.}

\begin{lemma}[Continuation \mscomm{across circles beyond the envelope}] \label{l6}
Let $S_t$, $t\in I$, be an analytic family of 
circles in the plane distinct from a pencil.
Let $F:U\to\mathbb{R}$ be a function biharmonic in a region
$U\subset\mathbb{R}^2$.
Suppose that for each $t\in I$ we have $S_t\cap U\ne\emptyset$ and the restriction $F\left|_{S_t}\right.$ is a restriction of a linear function.
Then for some segment $J\subset I$ the function $F$ extends to a function biharmonic in a neighborhood of
$\bigcup_{t\in J} S_t$ possibly except a common point of all the circles $S_t$.
\end{lemma}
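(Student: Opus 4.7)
The plan is to combine the two continuation results already proved, in the way sketched by Figure~\ref{figure-continuation}: first extend $F$ biharmonically \emph{along} the circles $S_t$ until the envelope of the family is reached, then extend it \emph{across} the circles to cover a two-sided neighborhood of each envelope piece. For the first stage I apply Lemma~\ref{l4} directly, obtaining a closed subsegment $J_0=[q,r]\subset I$ and a biharmonic extension of $F$ to the region $V$ bounded by arcs $R_q\subset S_q$, $R_r\subset S_r$, and at most two arcs of the envelope $E$ of the subfamily $\{S_t\}_{t\in J_0}$ (the white region of Figure~\ref{figure-continuation}). Proposition~\ref{cl2} then guarantees that for every $t\in J_0$ the extended $F$ still equals, on $S_t\cap V$, the same linear function originally prescribed by $F|_{S_t\cap U}$, so the standing hypothesis of the lemma is preserved on the enlarged domain.

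For the second stage I fix a closed subsegment $J\subset J_0$ and any parameter $p\in J$, and look at the piece $E_0$ of the envelope tangent to $S_p$. I choose a simply-connected subregion $V_p\subset V$ (the gray region in Figure~\ref{figure-continuation}) that straddles $S_p$ near its tangency with $E_0$, together with a nearby circle $S_{p'}$, $p'\in J$, close enough to $p$ that $V_p$ is nicely arranged with respect to $S_p$ and $S_{p'}$ in the sense of Definition~\ref{def-nicelyaranged}. Since $F|_{S_p\cap V_p}$ and $F|_{S_{p'}\cap V_p}$ are both linear by the previous paragraph, the Double Symmetry Principle (Lemma~\ref{cl4}) produces a biharmonic extension of $F$ to the set $r_p(V_p)\cap r_{p'}(V_p)-\Sigma_{pp'}$. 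A suitable placement of $V_p$ together with $p'$ sufficiently close to $p$ makes this reflected set cover a neighborhood of the envelope point on the far side of $E_0$, and hence of the arc of $S_p$ lying beyond $E_0$. Handling the second envelope piece in the same way and sweeping $p$ through $J$ assembles a biharmonic extension of $F$ on a neighborhood of $\bigcup_{t\in J}S_t$, with at most one excluded point—namely the common point of all the $S_t$ (if one exists), which belongs to every limit set $\Sigma_{pp'}$ and is precisely where the Double Symmetry Principle is not available. Single-valuedness of the patched extension is then automatic from Proposition~\ref{c0}, because any two local extensions so obtained agree with the original biharmonic $F$ on a common open subset of $V$.

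The main obstacle lies in the across-circles stage: one must verify that $V_p$ and $p'$ can be chosen so that $r_p(V_p)\cap r_{p'}(V_p)$ genuinely reaches beyond $E_0$, and this uniformly as $p$ varies over $J$. That requires a careful local-geometric analysis of the analytic family $S_t$ near its envelope, combined with a check of the nice-arrangement condition of Definition~\ref{def-nicelyaranged}. A secondary concern, controlled by further shrinking $J$, is to prevent the limit sets $\Sigma_{pp'}$ from accumulating along $\bigcup_{t\in J}S_t$ away from the possible common point, which would spoil the conclusion by enlarging the excluded set.
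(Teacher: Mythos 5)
Your first two stages match the paper's: Lemma~\ref{l4} to continue along the circles up to the envelope, then the double symmetry principle (Lemma~\ref{cl4}) applied to a small region $V_p$ straddling the tangency point $R_p\cap E_0$. The genuine gap is at the step where you write that the reflected set covers ``a neighborhood of the envelope point on the far side of $E_0$, and hence of the arc of $S_p$ lying beyond $E_0$.'' The reflection $r_p$ fixes $S_p$ pointwise, so $r_p(V_p)$ (and a fortiori $r_p(V_p)\cap r_{p'}(V_p)-\Sigma_{pp'}$) is again a small set clustered around the tangency point; it does not reach along the long complementary arc $S_p-R_p$. Sweeping $p$ through $J$ therefore only yields a biharmonic extension to a neighborhood of the envelope curve $E_0$, not of $\bigcup_{t\in J}S_t$. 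The paper closes this with a third stage that your proposal is missing: having reached a neighborhood $V$ of $E_0$, it first notes (using that $S_t\cap V$ is connected, so Proposition~\ref{cl2} applies) that $F|_{S_t\cap V}$ is still linear, and then applies Lemma~\ref{l4} a \emph{second} time, with $V$ in the role of $U$ and the arcs $S_t-R_t$ in the role of the $R_t$, to continue linearly along the circles beyond the envelope. Without this second along-circles continuation the stated conclusion about a neighborhood of $\bigcup_{t\in J}S_t$ is not established.

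A secondary problem is your single-valuedness argument. Proposition~\ref{c0} compares two biharmonic functions on a common \emph{region}; when the patched domain is not simply connected (a neighborhood of $\bigcup_{t\in J}S_t$ minus a common point is annulus-like), two local extensions can each agree with $F$ on some open set and still disagree on another component of their overlap --- this is exactly the monodromy issue the paper flags in its plan of proof. The paper resolves it by observing that continuation along any closed path $S_t$, $t\in J$, returns to the initial value, since along each circle the continuation is given by the explicit single-valued linear formula; your appeal to Proposition~\ref{c0} does not substitute for this. (Your worry about the limit sets $\Sigma_{pp'}$ accumulating is, by contrast, dealt with in the paper by letting $s,t\to p$, which removes the excluded sets in the limit.)
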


\begin{proof} The idea of the proof is to extend the function first
along the circles until we reach the envelope $E$ of the family, then by the double symmetry principle --- to a neighborhood of the envelope, and finally --- along the circles beyond the envelope; see Figure~\ref{figure-continuation}.

By Lemma~\ref{l4} it follows that
$F$ extends to a region bounded by certain arcs of the circles $S_q$ and $S_r$ and \mscomm{possibly} two pieces of the envelope $E$ for some $q,r\in I$. \mscomm{This completes the proof, if $E=\emptyset$. Otherwise,} at least one component $E_0$ of the envelope $E$ does not degenerate to a point, because the family $S_t$, $t\in [q,r]$, is not a pencil.

Let us extend the function $F$ to a neighborhood of the curve $E_0$.
Use the notation from the proof of Lemma~\ref{l4}.
Take $p\in [q,r]$. 
Let $V_p$ be the intersection of the open disc bounded by the circle $S_p$, an open disc of centered at \mscomm{$\mathrm{Cl}R_p\cap E_0$, and the region $V$.} 
Without loss of generality assume that $R_t\cap V_p=\emptyset$ for each $t\in [p-\epsilon,p]$ and
$R_t\cap V_p\ne\emptyset$ for each $t\in [p,p+\epsilon]$, where $\epsilon>0$ is small enough.

Take a pair of circles $S_t$ and $S_s$, where $s,t\in [p,p+\epsilon]$. By Lemma~\ref{cl4} the function $F$ extends to the region $r_t(V_p)\cap r_s(V_p)-\Sigma_{st}$. Approaching $t,s\to p$ one extends the function $F$ 
to a neighborhood of the point $R_p\cap E_0$; \mscomm{in particular, the latter is not a singular point for $F$}. So $F$ extends to a
neighborhood \mscomm{$V'$} of the curve $E_0$.

A consequence of this extension is that \mscomm{$F\left|_{S_t\cap V'}\right.$} is linear for each $t\in I$, because
each intersection \mscomm{$S_t\cap V'$} is connected and Proposition~\ref{cl2} can be applied.

Let us extend the function $F$ along the arcs $S_t-R_t$. Choose \mscomm{$U'\subset V'$} and $I'\subset I$ so that $R_t\cap U'=\emptyset$ and $S_t-R_t\cap U'\ne\emptyset$ for each $t\in I'$. Applying Lemma~\ref{l4} to the region $U'$ and the family $S_t$, $t\in I'$, we extend the function $F$ to a region bounded by
$S_{q'}-R_{q'}$, $S_{r'}-R_{r'}$ and certain pieces of the envelope $E$ for some $p',q'\in I'$. Take a segment $J$ strictly inside $[q',r']$. Then the function $F$ extends to a (possibly multi-valued) function biharmonic in a neighborhood of $\bigcup_{t\in J} S_t$ possibly except a common point of all the circles $S_t$, $t\in J$.
The latter function is single-valued because a continuation along any closed path $S_t$, $t\in J$, leads to the initial value.
\qed\end{proof}

\subsection{Proof and corollaries of the Pencil Theorem}

\begin{proof}[of Theorem~\ref{th1}]
Assume that $S_t$, $t\in I$, is not a pencil of circles. Clearly, there is a segment $J\subset I$ such that one of the following conditions hold:
\begin{itemize}
\item[(1)] the circles $S_t$, $t\in J$, pairwise cross but do not pass through one point;
\item[(2)] the circles $S_t$, $t\in J$, have a common point $O$ but no three circles $S_t$, $t\in J$, belong to one pencil;
\item[(3)] the circles $S_t$, $t\in J$, are nested.
\end{itemize}
Consider each case separately.

Case (1). By Lemma~\ref{l6} the function $F$ extends to a function biharmonic in a neighborhood of the set $\bigcup_{t\in J_1} S_t$ for some segment $J_1\subset J$. By Proposition~\ref{cl2} for each $t\in J_1$ the restriction $F\left|_{S_t}\right.$ is linear.
Then by Lemma~\ref{l1} case~(1) follows.

Case (2). Analogously to the previous paragraph case~(2) follows from Lemmas~\ref{l6}, ~\ref{l2} and Proposition~\ref{cl2}.

Case (3). By Lemma~\mscomm{\ref{l6}} it follows that
the function $F$ extends to the ring between a pair of circles from the family.
By Theorem~\ref{cl3} we may assume that these two circles are $x^2+y^2=1$ and $x^2+y^2=2$. Then by Lemma~\ref{l5} the function $F$ extends to the whole plane $\mathbb{R}^2$. Thus case~(3) follows from Lemma~\ref{l3} and Proposition~\ref{cl5}.
\qed\end{proof}

The following corollaries of Theorem~\ref{th1} are straightforward; see Table~\ref{isotropic}.

\begin{corollary} \label{th-pencil}
Let $\Phi^i$ be an i-Willmore surface carrying an analytic family~$\mathcal{F}^i$ of i-M-circles. Then either the surface $\Phi^i$ is i-M-equivalent to an i-paraboloid or the top view of the family~$\mathcal{F}^i$ is a pencil of circles or lines.
\end{corollary}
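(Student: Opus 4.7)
The plan is to translate the hypothesis into the setting of Theorem~\ref{th1}, invoke that theorem, and then in its exceptional case exhibit an explicit i-M-transformation that turns $F$ into a quadratic polynomial.

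Step one is the translation. Each i-M-circle of $\mathcal{F}^i$ lies simultaneously on $\Phi^i$ and on some i-M-sphere $z=\tfrac{a_t}{2}(x^2+y^2)+b_tx+c_ty+d_t$, so on the top view $S_t$ one has the identity $F(x,y)=\tfrac{a_t}{2}(x^2+y^2)+b_tx+c_ty+d_t$. When $S_t$ is a circle with normalized equation $x^2+y^2+p_tx+q_ty+r_t=0$, substituting $x^2+y^2=-p_tx-q_ty-r_t$ into the identity shows that $F|_{S_t}$ is the restriction of a linear function. When $S_t$ is a line, the two i-M-spheres cutting out the i-M-circle have coinciding $(x^2+y^2)$-coefficients, so the quadratic part already cancels and $F|_{S_t}$ is linear again.

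Step two is a reduction. If some $S_t$ are lines, I apply an i-M-transformation whose top view is a planar inversion with center lying off every $S_t$; by Theorem~\ref{cl3} the transformed surface is still the graph of a biharmonic function, and now every $S_t$ is a circle. The conclusion of the corollary is invariant under i-M-transformations (pencils of circles-or-lines are preserved by planar M\"obius maps, and being i-M-equivalent to an i-paraboloid is preserved by definition), so henceforth I may assume that $S_t$ is a family of circles.

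Step three applies the Pencil Theorem~\ref{th1}. Either the family $S_t$ is a pencil of circles and the conclusion is reached, or $F$ has the exceptional form~(\ref{eq-th1}). In the latter case, compose the i-M-translation $(x,y,z)\mapsto (x-c,y-d,z)$ with the reflection $(x,y,z)\mapsto (x,y,z)/(x^2+y^2)$, both legitimate i-M-transformations by Table~\ref{isotropic-trans}, so that in the new coordinates $(u,v)=(x,y)/(x^2+y^2)$ the transformed height function is $(u^2+v^2)F_1(x,y)$, with $F_1$ the translated form of~(\ref{eq-th1}). A direct substitution using $x^2+y^2=1/(u^2+v^2)$ sends the homogeneous term $A(x^2+y^2)$ to the constant $A$, the rational term to the quadratic $Bu^2+Cuv+Dv^2$, and each linear term $ex+fy+g$ to $eu+fv+g(u^2+v^2)$. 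The result is quadratic in $(u,v)$, so the image of $\Phi^i$ is an i-paraboloid; hence $\Phi^i$ is i-M-equivalent to one.

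The main obstacle I anticipate is the reduction from lines to circles and the verification that the chosen maps are genuine i-M-transformations in the sense of Table~\ref{isotropic-trans}; both are routine once that table is at hand. The computation in the exceptional case is a direct substitution, and the heart of the argument is simply Theorem~\ref{th1}.
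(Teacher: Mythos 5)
Your route is exactly the deduction the paper has in mind (it offers no written proof, calling the corollary ``straightforward''): translate linearity of $F$ on the top views into the hypothesis of the Pencil Theorem~\ref{th1}, and check that the exceptional function~(\ref{eq-th1}) becomes a quadratic polynomial --- hence an i-paraboloid --- after translating by $(c,d)$ and applying $(x,y,z)\mapsto(x,y,z)/(x^2+y^2)$. That computation is correct. Two points in your write-up need repair, though neither is fatal.

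First, your claim in step one that $F|_{S_t}$ is linear when $S_t$ is a \emph{line} is false. On such an i-M-circle one has $F=\tfrac{a}{2}(x^2+y^2)+bx+cy+d$ with $a$ the common i-mean curvature of the two i-M-spheres; what cancels when you subtract the two sphere equations is not what appears in the restriction of $F$ itself, and $x^2+y^2$ restricted to a line is genuinely quadratic in the line parameter. So $F|_{S_t}$ is quadratic, not linear, unless $a=0$. This slip happens to be harmless only because your step two discards it: after the inversion every $S_t$ is a circle and linearity is re-derived there. Second, an inversion center ``lying off every $S_t$'' need not exist: a one-parameter analytic family of lines can cover the whole plane without being a pencil (perturb the pencil of lines through a point, keeping the direction sweeping a full half-turn). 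The fix is to pass to a subinterval $J\subset I$ small enough that $\bigcup_{t\in J}S_t$ misses a point (always possible), apply Theorem~\ref{th1} to the subfamily, and then, in the non-exceptional case, observe that a family which is locally a pencil near every parameter value is globally a pencil, since a pencil is determined by two of its distinct members and $I$ is connected. With these two repairs the argument is complete and coincides with the paper's intended proof.
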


\begin{corollary} \label{cor-pencil} Let $\Phi$ be an L-minimal surface enveloped by an analytic family~$\mathcal{F}$ of cones. 
Then either the surface $\Phi$ is a parabolic cyclide or a sphere, or the Gaussian spherical image of the family~$\mathcal{F}$ is a pencil of circles in the unit sphere.
\end{corollary}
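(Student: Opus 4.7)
\emph{Proof plan.} The plan is to translate the statement to the isotropic model of Laguerre geometry via the dictionary of Section~\ref{sec-isotropic}, apply the already-proven Corollary~\ref{th-pencil}, and translate back. Since $\Phi$ is non-developable (being the envelope of a family of cones rather than a single plane), Theorem~\ref{pgm} identifies $\Phi^i$ with the graph of a multi-valued biharmonic function, and Proposition~\ref{observation} identifies $\mathcal F$ with an analytic family $\mathcal F^i$ of i-M-circles lying on $\Phi^i$. Hence $\Phi^i$ is an i-Willmore surface carrying an analytic family of i-M-circles, and Corollary~\ref{th-pencil} splits the analysis into two alternatives.

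In the exceptional alternative $\Phi^i$ is i-M-equivalent to an i-paraboloid. Table~\ref{isotropic} says that the i-paraboloids are exactly the isotropic images of parabolic cyclides and of oriented spheres, while Theorem~\ref{cl3} and Table~\ref{isotropic-trans} show that i-M-transformations are the isotropic images of L-transformations; hence $\Phi$ is L-equivalent to a parabolic cyclide or sphere, and since this joint class is L-invariant (its members are characterized inside Laguerre geometry as spheres together with envelopes of one-parameter families of spheres whose centres lie on a line, a property manifestly preserved by L-transformations), $\Phi$ is itself a parabolic cyclide or a sphere. In the generic alternative the top view of $\mathcal F^i$ is a pencil of circles or lines. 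Formula~(\ref{Pi}) shows that the top view of the isotropic image of an oriented plane with unit normal $(n_1,n_2,n_3)$ equals $\frac{1}{n_3+1}(n_1,n_2)$, which is precisely the stereographic projection of the unit sphere from the point $(0,0,-1)$ onto the plane $z=0$. Since stereographic projection is a bijection of the extended sphere onto the extended plane that sends circles to circles or lines and pencils to pencils, the top view of $\mathcal F^i$ is a pencil if and only if the Gaussian spherical image of $\mathcal F$ is a pencil of circles on the unit sphere.

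The only genuinely delicate step will be the bookkeeping in the exceptional alternative: checking that the joint class of parabolic cyclides and spheres really is closed under L-transformations, so that an L-preimage of such a surface is again in this class, and checking that the identification ``top view $=$ stereographic image of Gaussian image'' matches the two notions of pencil. Both points follow directly from the dictionary recorded in Tables~\ref{isotropic} and~\ref{isotropic-trans}, so no new analysis should be required beyond what is already established in Section~\ref{sec-isotropic} together with the Pencil Theorem.
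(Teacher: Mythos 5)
Your overall route is the one the paper intends (the paper offers no proof, declaring the corollary ``straightforward''), and your treatment of the generic alternative is correct and complete: formula~(\ref{Pi}) does exhibit the top view of $\mathcal F^i$ as the stereographic projection of the Gaussian image of $\mathcal F$ from $(0,0,-1)$, and stereographic projection carries pencils of circles on the sphere to pencils of circles or lines in the plane.

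The gap is in the exceptional alternative, at exactly the step you flag as delicate. Your proposed characterization of the class of parabolic cyclides and spheres --- ``envelopes of one-parameter families of spheres whose centres lie on a line'' --- is not correct: such an envelope is a surface of revolution about that line, whereas a parabolic cyclide such as $\mathbf{r}_7$ of Example~\ref{ex7}, given by $(y^2+z^2)(1-z)=x^2z$, is not a surface of revolution, and conversely most surfaces of revolution are not parabolic cyclides. Moreover, even granting some characterization by collinearity of centres, that property is not ``manifestly preserved by L-transformations'': in the cyclographic model an L-transformation acts as an affine Lorentz map of $\mathbb{R}^4=\{(m_1,m_2,m_3,R)\}$ that mixes the centre coordinates with the radius, so a family with collinear centres is generically sent to one whose centres lie on a parabola (try spheres of radius $t^2$ centred at $(t,t,0)$ under a boost in the $(m_1,R)$-plane). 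Thus the L-invariance of the class $\{$parabolic cyclides, spheres$\}$ --- which you correctly identify as the only thing separating ``L-equivalent to'' from ``is'' --- is asserted but not established. To close the gap you could either derive the invariance from a genuinely Laguerre-invariant description of the class (for instance, Dupin cyclides are L-invariant, being envelopes of two one-parameter families of spheres, and the parabolic ones are singled out by the affine type of their focal conics in the cyclographic model, which affine Lorentz maps preserve), or bypass it entirely by returning to the explicit exceptional function~(\ref{eq-th1}) of Theorem~\ref{th1} and verifying via Proposition~\ref{cor1} and Tables~\ref{isotropic-trans} and~\ref{trans-parabolic} that the Euclidean surface it produces is literally a parabolic cyclide or a sphere.
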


\begin{corollary} \label{cor-catalan} A ruled L-minimal surface is a Catalan surface, i.~e., contains a family of line segments parallel to one plane.
\end{corollary}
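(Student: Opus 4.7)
The plan is to view each line segment on a ruled L-minimal surface $\Phi$ as a degenerate cone of revolution (a cone of zero opening angle), so that the analytic family of rulings $\{\ell_t\}_{t\in I}$ plays the role of the family $\mathcal{F}$ of cones in Corollary~\ref{cor-pencil}. Applying that corollary then forces a very restrictive structure on how the ruling directions $\mathbf{d}_t\in S^2$ can vary.

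First I compute the Gaussian spherical image of a single ruling. Since every tangent plane of $\Phi$ along $\ell_t$ must contain $\ell_t$, its unit normal is perpendicular to $\mathbf{d}_t$. Hence the Gaussian spherical image of the (degenerate) cone $\ell_t$ is the \emph{great} circle
\begin{equation*}
   G_{\mathbf{d}_t}=\{\,\mathbf{n}\in S^2 : \mathbf{n}\cdot\mathbf{d}_t=0\,\}.
\end{equation*}
Thus as $t$ varies, the Gaussian spherical image of $\mathcal{F}$ is a one-parameter family of great circles on $S^2$.

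Second I invoke Corollary~\ref{cor-pencil}. In the generic case the family $\{G_{\mathbf{d}_t}\}$ is a pencil of circles on $S^2$. Because any two great circles intersect in a pair of antipodal points, the only pencil type compatible with great circles is the intersecting one: a pencil of great circles must share a common antipodal pair $\pm\mathbf{v}\in S^2$. Tangent and non-intersecting pencils are excluded, since two great circles can neither be tangent (a great circle is determined by a point and a tangent direction) nor disjoint. From $\pm\mathbf{v}\in G_{\mathbf{d}_t}$ for every $t$ we read off $\mathbf{v}\cdot\mathbf{d}_t=0$, so every ruling direction lies in the fixed plane $\mathbf{v}^{\perp}$, which is exactly the Catalan property.

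It remains to dispose of the exceptional cases of Corollary~\ref{cor-pencil}: $\Phi$ being a sphere or a parabolic cyclide. A sphere contains no straight line segment, so it cannot be ruled. For a parabolic cyclide one must appeal to its explicit description (it carries two analytic families of circles, not lines); any degenerate specialization of it that does contain an analytic family of rulings turns out to be a cylinder, for which the Catalan property is automatic. I expect this last verification to be the only delicate point of the proof; everything else is a direct and short deduction from Corollary~\ref{cor-pencil} together with the elementary fact that a pencil of great circles on $S^2$ must share an antipodal pair of base points.
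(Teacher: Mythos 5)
Your overall route is the same as the paper's: reduce to Corollary~\ref{cor-pencil} by viewing the rulings as degenerate cones, observe that the Gaussian image of a line is a great circle, and conclude that a pencil of great circles must be elliptic with an antipodal pair of base points, which is precisely the Catalan condition. That last step is carried out correctly and in more detail than the paper, which simply says the result ``follows'' from Corollary~\ref{cor-pencil}; your explicit exclusion of the tangent and disjoint pencil types, and of the sphere alternative, is sound.

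However, you have skipped the one step that the paper's proof actually consists of: verifying that the family of rulings is an \emph{analytic} family. You write ``the analytic family of rulings $\{\ell_t\}_{t\in I}$'' as though analyticity were part of the hypothesis, but the definition of a ruled surface only gives you a line segment through each point, with no regularity of the assignment $t\mapsto\ell_t$. Corollary~\ref{cor-pencil} (and, behind it, the Pencil Theorem~\ref{th1}) is stated only for analytic families, so without this verification the corollary cannot be invoked. The paper closes the gap as follows: by Theorem~\ref{pgm} the isotropic image of $\Phi$ is the graph of a biharmonic (hence real-analytic) function, so by Proposition~\ref{cor1} the surface $\Phi$ itself is real-analytic; since the direction of the ruling through a point $r$ is an asymptotic direction of $\Phi$ at $r$, the rulings depend analytically on the point, and the family is analytic. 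You should add this argument. A second, more minor point: your disposal of the parabolic-cyclide alternative via ``any ruled degeneration is a cylinder'' is shakier than necessary (cones of revolution are also ruled degenerations, and they are \emph{not} Catalan); the clean statement is that cones and cylinders are developable, hence excluded from the class of L-minimal surfaces ($K\ne0$ is required throughout, and Corollary~\ref{cor-pencil} concerns nondevelopable immersed surfaces), while a nondevelopable parabolic cyclide or sphere contains no line segment at all, so the exceptional branch is vacuous for ruled surfaces.
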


\begin{proof}[of Corollary~\ref{cor-catalan}] Let $\Phi$ be a ruled L-minimal surface. It suffices to prove that \mscomm{$\Phi$ contains an \emph{analytic} family of lines}; then by Corollary~\ref{cor-pencil} the result follows.
Since $\Phi$ is L-minimal by Theorem~\ref{pgm} and Proposition~\ref{cor1} it follows that $\Phi$ itself is analytic. Since $\Phi$ is ruled it follows that
for each point $r\in\Phi$ there is a line $L_r\subset\Phi$.
The direction of the line $L_r$ is an asymptotic direction of the surface $\Phi$ at the point $r$. \mscomm{By the definition of an L-minimal surface, we have $K\ne 0$, hence there are two asymptotic directions at $r$, which depend on $r$  analytically. If one of the asymptotic curves through $r$ is not a line, then all sufficiently close asymptotic curves are neither. Hence locally one can choose a line $L_r$ (among possibly two ones through $r$) so that it depends on $r$ analytically.}
\qed\end{proof}

%


\begin{remark} Theorem~\ref{th1} does not remain true for biharmonic functions $\mathbb{C}^2\to \mathbb{C}$. For instance, for the function $F(x,y)=(x^2+y^2)(x+iy)$ there is a $2$-parametric family of circles $S_t$, $t\in I^2$, such that for each $t\in I^2$ the restriction $F\left|_{S_t}\right.$ is a restriction of a linear function.
\end{remark}

\begin{remark} Theorem~\ref{th1} does not remain true for real analytic functions $\mathbb{R}^2\to \mathbb{R}$. For instance, the restriction of the function $F(x,y)=\sqrt{(x^2+y^2)^2-x^2+1}$ to each circle of the family $x^2+y^2-tx-\sqrt{t^2-1}=0$ is a restriction of a linear function.
\end{remark}

\begin{remark} The proof of Theorem~\ref{th1} is simpler in the generic case when the biharmonic function $F$ extends to a (possibly multi-valued) function in the whole plane except a discrete subset $\Sigma$. For instance, to prove Lemma~\ref{l6} in this case it suffices to take a segment $J\subset I$ such that $S_t\cap \Sigma=\emptyset$ for each $t\in J$.
\end{remark}


\section{Classification of L-minimal surfaces enveloped by a family of cones}\label{classification}

\subsection{Elliptic families of cones}

The results of the previous section give enough information to describe all the L-minimal surfaces enveloped by a family of cones, in particular, ruled L-minimal surfaces. We have got to know that either the top view of
a family of i-M-circles in an i-Willmore is a pencil
or the surface contains another family of i-circles with top view being a pencil.
Let us consider separately each possible type of the pencil.

\begin{definition} \label{def-elliptic} An \emph{elliptic} pencil of circles in the plane (or in a sphere) is the set of all the circles passing through two fixed distinct points. A $1$-parametric family of cones (possibly degenerating to cylinders or lines) in space is
\emph{elliptic} if the Gaussian spherical images of the cones form an elliptic pencil of circles in the unit sphere.
\end{definition}

Denote by $\mathrm{Arctan}\, x=\{\,\arctan x+\pi k\,:\,k\in\mathbb{Z}\,\}$ the multi-valued inverse of the tangent function.

\begin{theorem} \label{i-classification-elliptic}
Let $\Phi^i$ be an i-Willmore surface carrying a family of i-M-circles.
Suppose that the top view of the family is an elliptic pencil of circles.
Then the surface $\Phi^i$ is i-M-equivalent to a piece of the surface
\begin{multline}
z=\left(a_1(x^2+y^2)+a_2x+a_3\right)\mathrm{Arctan}\,\frac{y}{x}
+\frac{b_1y^2+b_2xy}{x^2+y^2}+c_1y^2+c_2xy\label{eq-th2-2}\\[-10pt]
\end{multline}
for some $a_1,a_2,a_3,b_1,b_2,c_1,c_2\in\mathbb{R}$.
\end{theorem}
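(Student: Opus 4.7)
\emph{Plan.} My strategy is to put the pencil into a standard position by an i-M-transformation, reduce the problem to a linear ODE system via separation of variables in polar coordinates, and finally return to Cartesian coordinates while absorbing the residual ambiguity by further i-M-transformations.

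Rotations and translations generate the group of planar Euclidean motions in top view, and the isotropic inversion $(x,y,z)\mapsto(x,y,z)/(x^2+y^2)$ from Table~\ref{isotropic-trans} induces the planar inversion $(x,y)\mapsto(x,y)/(x^2+y^2)$; together these generate the full planar M\"obius group. Hence there is an i-M-transformation whose top view sends the two base points of the elliptic pencil to $0$ and $\infty$. By Theorem~\ref{cl3}, the image of $\Phi^i$ is still the graph of a (possibly multi-valued) biharmonic function $F$, and the new top view of the family is the pencil of all lines through the origin. Each i-M-circle above the line $L_\theta=\{(r\cos\theta,r\sin\theta):r\in\mathbb{R}\}$ lies on an i-M-sphere $z=\tfrac{a}{2}(x^2+y^2)+bx+cy+d$; since $x^2+y^2=r^2$ on $L_\theta$, this forces
\[
F(r\cos\theta,r\sin\theta)=\alpha(\theta)\,r^2+\beta(\theta)\,r+\gamma(\theta)
\]
for some functions $\alpha,\beta,\gamma$ of $\theta$.

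Using the polar Laplacian $\Delta=\partial_r^2+r^{-1}\partial_r+r^{-2}\partial_\theta^2$, a short calculation gives $\Delta F=(4\alpha+\alpha'')+(\beta+\beta'')/r+\gamma''/r^2$, and then $\Delta^2 F=0$ separates in the three powers of $1/r$ into three independent linear ODEs; integrating twice each, they reduce to
\[
\alpha''+4\alpha=p_1+p_2\theta,\qquad \beta''+\beta=q_1\cos\theta+q_2\sin\theta,\qquad \gamma''=s_1\cos 2\theta+s_2\sin 2\theta.
\]
The $\alpha$- and $\gamma$-equations are non-resonant and have obvious explicit solutions. The $\beta$-equation is \emph{resonant}, with particular solution $\theta\bigl(-\tfrac{q_2}{2}\cos\theta+\tfrac{q_1}{2}\sin\theta\bigr)$. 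This resonance is precisely what produces the $\mathrm{Arctan}$ factor in~(\ref{eq-th2-2}).

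Substituting $r\cos\theta=x$, $r\sin\theta=y$, $\theta=\arctan(y/x)$, $\cos 2\theta=(x^2-y^2)/(x^2+y^2)$, $\sin 2\theta=2xy/(x^2+y^2)$, the general solution for $F$ matches the right-hand side of~(\ref{eq-th2-2}) up to three spurious contributions: a linear function $C_1x+C_2y+C_3$, a constant multiple of $x^2+y^2$, and a $y$-coefficient inside the Arctan factor. The first two are absorbed by the i-M-transformations $(x,y,z)\mapsto(x,y,z+C_1x+C_2y)$, $(x,y,z)\mapsto(x,y,z+h)$, and $(x,y,z)\mapsto(x,y,z+x^2+y^2-1)$ from Table~\ref{isotropic-trans}; the third is eliminated by a final rotation $R^\theta$, which merely shifts $\arctan(y/x)$ by a constant and introduces a further polynomial correction that is itself of one of the absorbable shapes. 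The main technical obstacle is this bookkeeping step: one must confirm that the seven free parameters surviving after all these reductions correspond bijectively to the coefficients $a_1,a_2,a_3,b_1,b_2,c_1,c_2$ of~(\ref{eq-th2-2}), so that the general biharmonic function of the required type is in fact exhausted by the stated family.
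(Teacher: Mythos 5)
Your proposal is correct and follows essentially the same route as the paper: normalize the elliptic pencil to the lines through the origin by an i-M-transformation, use separation of variables in polar coordinates to reduce $\Delta^2F=0$ to the same three ordinary differential equations (this is the paper's Proposition~\ref{auxillary}), and absorb the residual constant, linear, $x^2+y^2$ and $a_4y\,\mathrm{Arctan}$ terms by a rotation together with the offset and affine i-M-transformations of Table~\ref{isotropic-trans}. The only cosmetic differences are that the paper computes $r^4\Delta^2F$ directly rather than iterating $\Delta$, performs the normalizing rotation before the polynomial subtraction, and notes that the $\mathrm{Arctan}$ terms also arise from the $\theta$-linear solutions of the $\alpha$- and $\gamma$-equations, not only from the resonance in the $\beta$-equation.
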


\begin{proof}[of Theorem~\ref{i-classification-elliptic}]
Perform an i-M-transformation taking the elliptic pencil of circles in the top view to the pencil of lines $y=tx$, where $t$ runs through a segment $J\subset \mathbb{R}$.
Denote by $z=F(x,y)$ the surface obtained from the surface $\Phi^i$ by the transformation, where $F$ is a biharmonic function defined in a region $U\subset\mathbb{R}^2$. Assume without loss of generality that $(0,0)\not\in U$ and $F$ is single-valued in $U$.
Since an i-M-transformation takes i-M-circles to i-M-circles it follows that
the restriction of the function $F$ to (an appropriate segment of) each line $y=tx$, where $t\in J$, is a quadratic function.

\begin{proposition} \label{auxillary} Let $F(x,y)$ be a biharmonic function in a region $U\subset\mathbb{R}^2-\{(0,0)\}$.
Suppose that the restriction of the function $F$ to the intersection of each line $y=tx$, where $t\in J$, with the region $U$ is a quadratic function. Then
\begin{multline}
F(x,y)=\left(a_1(x^2+y^2)+a_2x+a_3+a_4y\right)\arctan\frac{y}{x}+\\
+\frac{b_1y^2+b_2xy+b_3x^2}{x^2+y^2}+c_1y^2+c_2xy+c_3x^2+d_1x+d_2y\label{eq-auxillary}
\end{multline}
for some $a_1,a_2,a_3,a_4,b_1,b_2,b_3,c_1,c_2,c_3,d_1,d_2\in\mathbb{R}$.
\end{proposition}

\begin{proof} Consider the polar coordinates in $U$.
Restrict the function $F$ to a subregion of the form $(r_1,r_2)\times (\phi_1,\phi_2)\subset U$.
Then $F(r,\phi)=a(\phi)r^2+b(\phi)r+c(\phi)$ in the region $(r_1,r_2)\times (\phi_1,\phi_2)$ for some smooth functions $a(\phi), b(\phi), c(\phi)$.
Thus $r^4\Delta^2F=\left(4a''+a^{(4)}\right)r^2+\left(b+2b''+b^{(4)}\right)r+\left(4c''+c^{(4)}\right)$.
Since $\Delta^2F=0$ it follows that the coefficients of this polynomial in $r$ vanish.
Solving the obtained ordinary differential equations we get:
\begin{align*}
a(\phi)&=\alpha_1+\alpha_2\phi+\alpha_3\cos2\phi+\alpha_4\sin2\phi;\\
b(\phi)&=\beta_1\cos\phi+\beta_2\sin\phi+\beta_3\phi\cos\phi+\beta_4\phi\sin\phi;\\
c(\phi)&=\gamma_1+\gamma_2\phi+\gamma_3\cos2\phi+\gamma_4\sin2\phi,
\end{align*}
for some $\alpha_1,\dots,\alpha_4,\beta_1,\dots,\beta_4,\gamma_1,\dots,\gamma_4\in\mathbb{R}$.
Returning to the initial \mscomm{Cartesian} coordinate system we get the 
required formula. 
\qed\end{proof}

To complete the proof of Theorem~\ref{i-classification-elliptic} perform an appropriate rotation around the $z$-axis to achieve $a_4=0$ in formula~(\ref{eq-auxillary}) and then the
i-M-transformation $z\mapsto z-c_3(x^2+y^2)-d_1x-d_2y-b_3$
to achieve $b_3=c_3=d_1=d_2=0$.
\qed\end{proof}

\begin{table}[htbp]
\caption{Biharmonic functions whose restrictions to each line $y=tx$, $t\in I$, are quadratic functions and corresponding Laguerre minimal surfaces (or a Legendre surface in case of $\mathbf{r}_2$)}
\label{trans-elliptic}
\begin{tabular}{|l|r|}
\hline
Biharmonic function & Laguerre minimal surface\\
\hline
$(x^2+y^2-1)\mathrm{Arctan}(y/x)$                    &  $\mathbf{r}_1(u,v)$  \\
$(x^2+y^2-2)\mathrm{Arctan}(y/x)/2\sqrt{2}$          &  $\tilde{\mathbf{r}}_1(u,v)$ \\
$-x\mathrm{Arctan}(y/x)$                             &  $\mathbf{r}_2(u,v)$    \\
$(x\cos\theta +y\sin\theta )^2(1-1/(x^2+y^2))/2$   &  $\mathbf{r}^{\theta }_3(u,v)$ \\
$(x\cos\vartheta+y\sin\vartheta)^2(1-2/(x^2+y^2))/4\sqrt{2}$         &  $\tilde{\mathbf{r}}^{\vartheta }_3(u,v)$ \\
$a(x^2+y^2)+bx+cy+d$                                    & oriented sphere\\
\hline
\end{tabular}
\end{table}

A Laguerre minimal surface enveloped by an elliptic family of cones
is obtained from the surface~(\ref{eq-th2-2}) by transformation from Proposition~\ref{cor1}.
Let us give some typical examples obtained from graphs of the functions in the left column of Table~\ref{trans-elliptic}; see also
Figure \ref{figure-elliptic}. These examples are ``building blocks'' whose convolutions form all the surfaces in question. We represent them in special parametric form $\mathbf{r}(u,v)$, where the map $\mathbf{r}(u,v)$ is the inverse of the composition of the Gaussian spherical map and the stereographic projection. This is convenient to get easy expressions for the convolution surfaces. The choice of building blocks is a question of taste; we choose them to get the simplest possible expressions for $\mathbf{r}(u,v)$.

\begin{example} \label{ex1} The first
building block is the well-known helicoid which
is given implicitly by $x=-y\tan(z/2)$. It can
be parametrized via
\begin{multline}\label{eq1}
\mathbf{r}_1(u,v)=\left( \, u-\frac{u}{u^2+v^2},\,\frac{v}{u^2+v^2}-v,\, 2 \mathrm{Arctan}\,\frac{u}{v}\, \right) \qquad\text{or as a ruled surface via}\\[-7pt]
\end{multline}
\begin{multline}\label{eq1-kinem}
\mathbf{R}_1(\varphi,\lambda)=\left( \, 0,\,0,\, -2\varphi\, \right)
+ \lambda\left( \, \sin\varphi,\,\cos\varphi,\, 0 \, \right).\\[-7pt]
\end{multline}
\end{example}

\begin{example} \label{ex2} The next example
is the cycloid $\mathbf{r}(t)=(t-\sin t,1-\cos t,0)/2$.
One should think of a \emph{cycloid} as a Legendre surface formed by all the contact elements $(r,P)$
such that the plane $P$ passes through the line tangent to the curve $\mathbf{r}(t)$ at the point $r$ of the curve; see the definitions in \S\ref{ssec:laguerre}. We use the parametrization:
\begin{multline}\label{eq2}
\mathbf{r}_2(u,v)=\left(\,\mathrm{Arctan}\,\frac{u}{v}-\frac{u v}{u^2+v^2},\, \frac{u^2}{u^2+v^2},\, 0\,\right).\\[-7pt]
\end{multline}
The family of tangent lines to the cycloid can be parametrized via
\begin{multline}\label{eq2-kinem}
\mathbf{R}_2(\varphi,\lambda)=\left( \, \varphi,\,1,\, 0 \, \right)
+ \lambda\left( \, \sin\varphi,\,\cos\varphi,\, 0 \, \right).\\[-7pt]
\end{multline}
\end{example}

\begin{example} \label{ex3} The third building block is the Pl\"ucker conoid
$z=y^2/(x^2+y^2)$. In parametric form it can be written as:
\begin{multline}\label{eq3}
\mathbf{r}_3(u,v)=\frac{uv}{u^2+v^2}\left(\,
\frac{v}{u^2 + v^2}-v,\, u-\frac{u}{u^2 + v^2},\, \frac{u}{v}
\, \right)\qquad\text{or as}\\[-7pt]
\end{multline}
\begin{multline}\label{eq3-kinem}
\mathbf{R}_3(\varphi,\lambda)=\left( \, 0,\,0,\, \cos2\varphi+1\, \right)\!/2
\,+\, \lambda\left( \, \sin\varphi,\,\cos\varphi,\, 0 \, \right).\\[-7pt]
\end{multline}
The Pl\"ucker conoid has the special property that it arises as an $L$-minimal ruled surface
and at the same time as an i-Willmore surface carrying a $2$-parametric family of i-M-circles.
\end{example}


\begin{figure}[htbp]
\centering
\begin{overpic}[width=\textwidth]{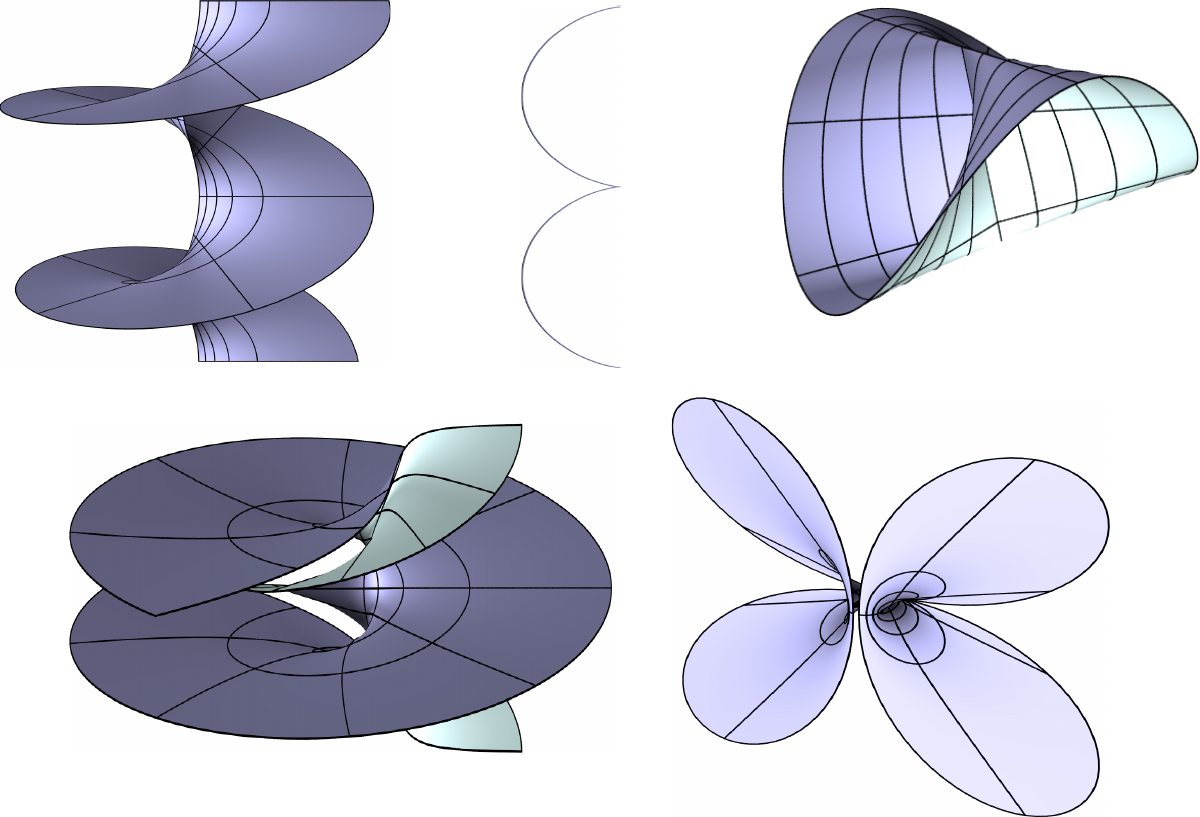}
 \put(10,39){$\mathbf{r}_1$}
 \put(50,42){$\mathbf{r}_2$}
 \put(80,39){$\mathbf{r}_3$}
 \put(10,5){$\tilde{\mathbf{r}}_1$}
  \put(70,5){$\tilde{\mathbf{r}}_3$}
\end{overpic}
 \caption{Building blocks for L-minimal surfaces enveloped by an elliptic family of cones. Starting from the top left we show the surfaces $\mathbf{r}_1$, $\mathbf{r}_2$, $\mathbf{r}_3$,
 $\tilde{ \mathbf{r}}_3$ and $\tilde{ \mathbf{r}}_1$ in clockwise direction.
 Note that the cycloid $\mathbf{r}_2$ lies in a plane orthogonal to the $z$-axis.
 For details refer to Examples \ref{ex1}, \ref{ex2}, \ref{ex3} and Section \ref{ssec:laguerre}.
 }
\label{figure-elliptic}
\end{figure}

We shall now see that an arbitrary ruled L-minimal surface is up to isometry a convolution of these building blocks; see Figure~\ref{figure-ruled} to the bottom. Denote by
$\mathbf{r}^\theta(u,v)=R^\theta\mathbf{r}(R^{-\theta}(u,v))$,
where $R^\theta$ is the counterclockwise rotation through an angle $\theta$ around the $z$-axis; in this formula the plane $(u,v)$ is identified with the plane $z=0$.


\begin{theorem}[Classification of ruled L-minimal surfaces]\label{th-ruled} A ruled Laguerre minimal surface is up to isometry a piece of the surface
\begin{multline}\label{eq-th-ruled}
\mathbf{r}(u,v)=a_1\mathbf{r}_1(u,v)+a_2\mathbf{r}_2(u,v)+a_3\mathbf{r}^\theta_3(u,v)\\[-7pt]
\end{multline}
for some $a_1,a_2,a_3,\theta\in\mathbb{R}$ such that $a_1^2+a_3^2\ne0$.
Conversely, any immersed piece of the surface~\textup{(\ref{eq-th-ruled})} is ruled and Laguerre minimal.
\end{theorem}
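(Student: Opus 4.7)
The plan is to reduce to the elliptic-pencil case treated by Proposition~\ref{auxillary} (the technical heart of Theorem~\ref{i-classification-elliptic}) via Corollary~\ref{cor-catalan}, then identify the resulting biharmonic representative as a linear combination of the three building blocks recorded in Table~\ref{trans-elliptic}; the theorem will then follow from the convolution-to-linearity correspondence of~\S\ref{sec-isotropic}.

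First I would take a ruled L-minimal surface $\Phi$. By Corollary~\ref{cor-catalan}, $\Phi$ is Catalan, so after an ambient rigid motion its rulings may be assumed parallel to the $xy$-plane. A short calculation with the isotropic projection~(\ref{Pi}) shows that the oriented tangent planes along a line of horizontal direction $(\sin\varphi,\cos\varphi,0)$ project to an i-M-circle whose top view is the line through the origin of direction $(\cos\varphi,-\sin\varphi)$. Hence $\Phi^i$ carries an analytic family of i-M-circles whose top views form the pencil of lines through the origin, and Proposition~\ref{auxillary} writes $\Phi^i$ as the graph of a biharmonic function $F$ of the form~(\ref{eq-auxillary}). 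Next I would compare $F$ with the representatives $f_1=(x^2+y^2-1)\arctan(y/x)$, $f_2=-x\arctan(y/x)$, $f_3^\theta=(x\cos\theta+y\sin\theta)^2\bigl(1-1/(x^2+y^2)\bigr)/2$ of $\mathbf{r}_1,\mathbf{r}_2,\mathbf{r}_3^\theta$ from Table~\ref{trans-elliptic} and choose $a_1,a_2,a_3,\theta$ so that the difference $F-a_1f_1-a_2f_2-a_3f_3^\theta$ reduces to a polynomial $Q(x,y)=\alpha(x^2+y^2)+\beta x+\gamma y+\delta$. By the last row of Table~\ref{trans-elliptic}, $Q$ represents a single oriented sphere, whose linear and radial quadratic components are absorbed by the ambient translations listed in Table~\ref{isotropic-trans}. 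Linearity of the convolution-to-biharmonic correspondence of~\S\ref{sec-isotropic} then gives $\Phi=a_1\mathbf{r}_1\oplus a_2\mathbf{r}_2\oplus a_3\mathbf{r}_3^\theta$ up to isometry, and reading this in the ruled parametrizations~(\ref{eq1-kinem}), (\ref{eq2-kinem}), (\ref{eq3-kinem}) produces exactly~(\ref{eq-th-ruled}); the immersion condition will translate into $a_1^2+a_3^2\ne 0$ because $\mathbf{r}_2$ on its own degenerates to a curve.

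The converse will be a direct check: each $\mathbf{r}_i$ is L-minimal by Theorem~\ref{pgm} and Table~\ref{trans-elliptic}, L-minimality is preserved under convolution by linearity of the biharmonic equation, and summing the parametrizations~(\ref{eq1-kinem})--(\ref{eq3-kinem}) produces a Catalan surface whose rulings are $\lambda(\sin\varphi,\cos\varphi,0)$ shifted by a base curve of pure Fourier content $0$, $1$, and $2$. The hard part will be the decomposition step above: the general biharmonic representative~(\ref{eq-auxillary}) contains, beyond what $a_1f_1+a_2f_2+a_3f_3^\theta+Q$ can absorb, ``stray'' transcendental summands proportional to $\arctan(y/x)$ and $y\arctan(y/x)$, and the constant part $\delta$ of~$Q$ is not itself a rigid motion. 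Forcing these pieces to vanish will require the strictly stronger hypothesis that the i-M-circles are of the \emph{line} form~(\ref{line})---i.e., intersections of i-M-spheres $z=m_3(x^2+y^2-1)-m_1x-m_2y$ having no constant term in Hesse form, rather than the arbitrary i-M-sphere intersections allowed by Proposition~\ref{observation}---and this is where the heart of the computation lies.
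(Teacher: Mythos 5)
Your proposal follows essentially the same route as the paper's own proof: Corollary~\ref{cor-catalan} to make the rulings horizontal, passage to the isotropic model where the top views form the pencil of lines through the origin, Proposition~\ref{auxillary} for the general biharmonic solution, and then the observation---which you correctly isolate as the crux---that the special form~(\ref{line}) of the i-M-circles (rather than arbitrary i-M-sphere intersections) forces linear relations among the coefficients (the paper's $a_1+a_3=b_1+c_1=b_2+c_2=b_3+c_3=0$), reducing $F$ to a combination of the three building blocks modulo terms removable by genuine isometries. Your converse via linearity of the convolution and summation of the kinematic parametrizations is likewise the paper's Proposition~\ref{rulings}.
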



\begin{proof}[of Theorem~\ref{th-ruled}] Let us prove the direct implication.
Let $\Phi$ be a ruled L-minimal surface. By Corollary~\ref{cor-catalan} it follows that $\Phi$ contains a family of line segments parallel to one plane. Choose a coordinate system so that the plane is $Oxy$.

Consider the corresponding surface $\Phi^i$ in the isotropic model. By Theorem~\ref{pgm} it follows that
(a piece of) the surface $\Phi^i$ is a graph of a function $F$ biharmonic in a region $U\subset\mathbb{R}^2$. Since the surface $\Phi$ carries a family of lines parallel to the plane $Oxy$ it follows that the surface $\Phi^i$ carries a family of i-M-circles of the form~(\ref{line}) with $m_3=n_3$. Thus the restriction of the function $F$ to the intersection of the region $U$ with each line $y=tx$, where $t$ runs through a segment $J\subset\mathbb{R}$, is a quadratic function $m_3(t)(x^2+y^2-1)-m_1(t)x-m_2(t)y$.

By Proposition~\ref{auxillary} it follows that formula~(\ref{eq-auxillary}) holds. In this formula $a_1+a_3=b_1+c_1=b_2+c_2=b_3+c_3=0$ because the restriction of the function $F$ to the lines $y=tx$ has special
form $m_3(t)(x^2+y^2-1)-m_1(t)x-m_2(t)y$.

Let us simplify expression~(\ref{eq-auxillary}) by appropriate isometries of $\mathbb{R}^3$
(corresponding to i-M-transformations of the isotropic model; see Table~\ref{isotropic-trans}).
First perform an appropriate rotation of $\mathbb{R}^3$ around the $z$-axis to achieve $a_4=0$ in formula~(\ref{eq-auxillary}) and appropriate translations along the $x$- and $y$-axes to achieve $d_1=d_2=0$.
Bringing to the diagonal form one gets $c_1x^2+c_2xy+c_3y^2=a\left(x\sin\theta+y\cos\theta\right)^2+c\left(x^2+y^2\right)$ for some numbers $a,\theta,c\in\mathbb{R}$. Perform the translation by vector $(0,0,-c)$
along the $z$-axis.

After all the above isometries the function~(\ref{auxillary}) becomes a linear combination in the first, third and fourth functions in the left column of Table~\ref{trans-elliptic}.
By Proposition~\ref{cor1} transformation~(\ref{eq-cor1}) takes the functions in the left column of Table~\ref{trans-elliptic} to the surfaces in the right column. Since the expression in the right-hand side of the formula~(\ref{eq-cor1}) is linear in $F$ the direct implication in the theorem follows. We exclude the case $a_1=a_3=0$ because it does not lead to an immersion.

\begin{proposition} \label{rulings} If $a_1^2+a_3^2\ne 0$ then the surface~\textup{(\ref{eq-th-ruled})}
contains the family of lines
\begin{multline}\label{eq-proof-th-kineruled}
\mathbf{R}(\varphi,\lambda)=a_1\mathbf{R}_1(\varphi,\lambda)+a_2\mathbf{R}_2(\varphi,\lambda)+a_3R^\theta\mathbf{R}_3(\varphi-\theta,\lambda),\\[-7pt]
\end{multline}
where $\varphi$ is the family parameter and $\lambda$ is the line parameter.
\end{proposition}

\begin{proof}[of Proposition~\ref{rulings}]
Fix a number $\varphi\in\mathbb{R}$. Consider three parallel lines
$\mathbf{R}_1(\varphi,\lambda)$, $\mathbf{R}_2(\varphi,\lambda)$, and $R^\theta\mathbf{R}_3(\varphi-\theta,\lambda)$.
Then the line $\mathbf{R}(\varphi,\lambda)$ given by~(\ref{eq-proof-th-kineruled}) is their convolution as Legendre surfaces (to a line $L$ we assign the Legendre surface $\{\,(r,P)\in ST\mathbb{R}^3\,:\,r\in L, P\supset L\,\}$).
Since the lines $\mathbf{R}_1(\varphi,\lambda)$ and $\mathbf{R}_3(\varphi,\lambda)$ are contained in the surfaces $\mathbf{r}_1(u,v)$ and $\mathbf{r}_3(u,v)$, respectively, and the line $\mathbf{R}_2(\varphi,\lambda)$ is tangent to the curve $\mathbf{r}_2(u,v)$
it follows that the line $\mathbf{R}(\varphi,\lambda)$ is contained in the convolution surface $\mathbf{r}(u,v)$ unless $a_1=a_3=0$.
\qed
\end{proof}

Now complete the proof of Theorem~\ref{th-ruled} by checking its reciprocal implication. By Proposition~\ref{rulings} it follows that the surface~(\ref{eq-th-ruled}) is ruled unless $a_1=a_3=0$ (when neither piece of the surface is immersed). By Proposition~\ref{cor1}, Theorem~\ref{pgm}, and Table~\ref{trans-elliptic} it follows that any immersed piece of the surface~(\ref{eq-th-ruled}) is L-minimal.
\qed\end{proof}


\begin{proof}[of Theorem~\ref{th-kineruled}]
By Theorem~\ref{th-ruled} a ruled L-minimal surface can up to isometry be parametrized via~(\ref{eq-th-ruled})  with $a_1^2+a_3^2\ne0$. By Proposition~\ref{rulings} the surface can also be parametrized via~(\ref{eq-proof-th-kineruled}).
It remains to notice that up to isometry formulas~(\ref{eq-proof-th-kineruled}) and~(\ref{eq-ruled-intro}) define the same class of surfaces.
\qed\end{proof}


\begin{figure}[htbp]
\centering
\includegraphics[width=\textwidth]{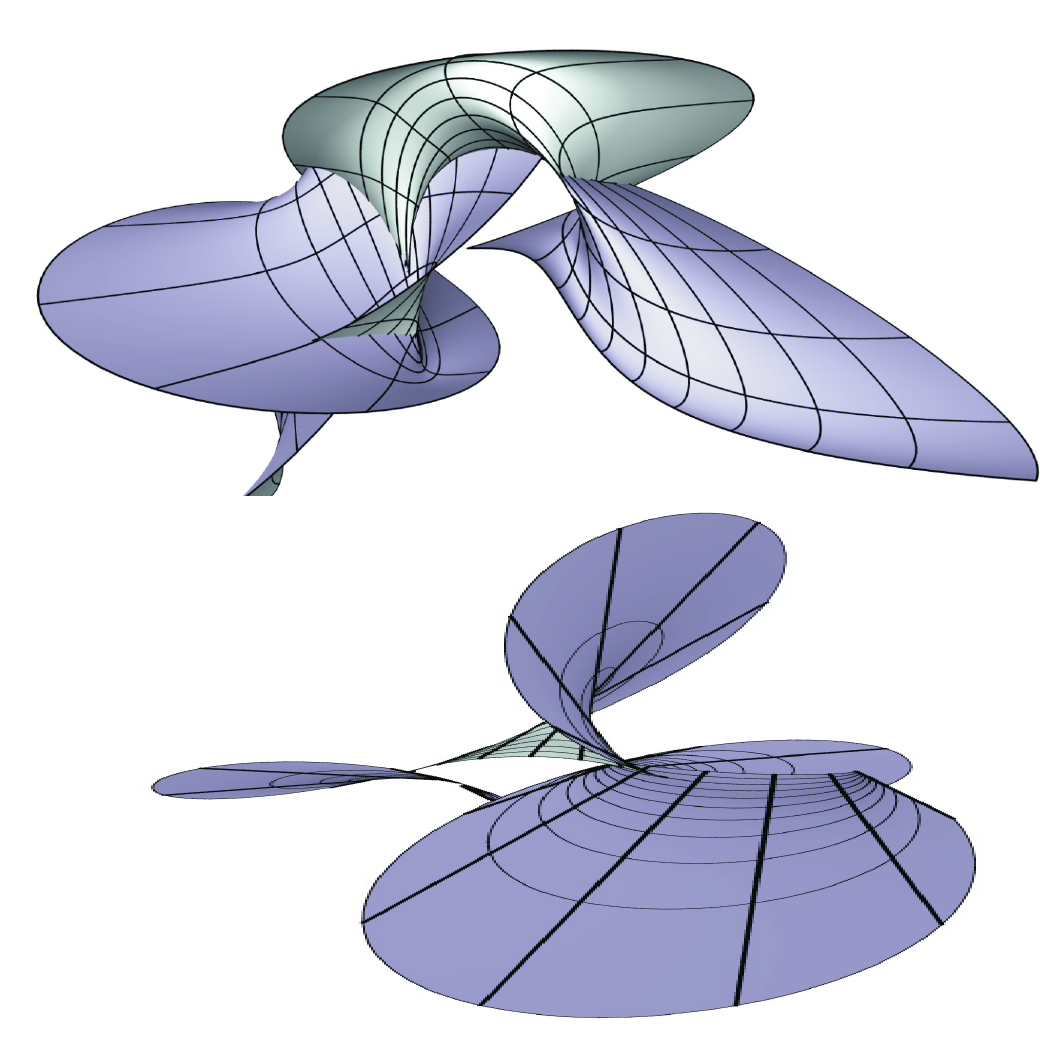}
 \caption{ (Top) A general L-minimal surface enveloped by
an elliptic family of cones. For more information refer to Definition~\ref{def-elliptic}
and Corollary~\ref{classification-elliptic}. (Bottom) A general ruled L-minimal surface is a convolution surface of the surfaces $\mathbf{r}_1$, $\mathbf{r}_2$ and $\mathbf{r}_3$. The rulings are depicted in black. For more information refer to Theorem~\ref{th-ruled}.}
 \label{figure-ruled}
\end{figure}

An L-minimal surface enveloped by an elliptic family of cones can be obtained from Examples~\ref{ex1}--\ref{ex3} by performing L-transformations and taking special convolution surfaces (in general convolution operation does not preserve the class of surfaces enveloped by a family of cones); see Figure~\ref{figure-ruled} to the top. Recall that $\tilde{\mathbf{r}}(u,v)$ is the surface obtained from a surface $\mathbf{r}(u,v)$ by the L-transformation~$\Lambda$; see \S\ref{ssec:laguerre} for the definition.


\begin{corollary}[Classification for elliptic type]\label{classification-elliptic}
A Laguerre minimal surface
enveloped by an elliptic family of cones 
is Laguerre equivalent to a piece of the surface
\begin{multline}\label{eq-classification-elliptic}
\mathbf{r}(u,v)=a_{1}\mathbf{r}_1(u,v)+a_2\mathbf{r}_2(u,v)
+a_3\mathbf{r}^{\theta}_3(u,v)+
a_4\tilde{\mathbf{r}}_1(u,v)+
a_5\tilde{\mathbf{r}}^{\vartheta}_3(u,v)\\[-10pt]
\end{multline}
for some $a_1,a_2,a_3,a_4,a_5,\theta ,\vartheta \in\mathbb{R}$.
Conversely, an immersed piece of surface~\textup{(\ref{eq-classification-elliptic})} is Laguerre minimal and is enveloped by an elliptic family of cones.
\end{corollary}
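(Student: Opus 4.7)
The plan is to mirror the proof of Theorem~\ref{th-ruled}, promoting the decomposition from three building blocks to the full five-block family. The direct implication proceeds via the isotropic model: start with an L-minimal $\Phi$ enveloped by an elliptic family of cones; by Proposition~\ref{observation} the image $\Phi^i$ carries an analytic family of i-M-circles, and by Definition~\ref{def-elliptic} their top view is an elliptic pencil; by Theorem~\ref{pgm}, $\Phi^i$ is locally a graph of a biharmonic function; and by Theorem~\ref{i-classification-elliptic} we may apply an i-M-transformation so that this function is the explicit expression in~(\ref{eq-th2-2}). Because i-M-transformations correspond to L-transformations (Table~\ref{isotropic-trans}), this costs only L-equivalence.

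The heart of the proof is to decompose~(\ref{eq-th2-2}) as
\[
F = \alpha_1 f_1 + \alpha_2 f_2 + \alpha_4 \tilde f_1 + \alpha_3 f_3^{\theta} + \alpha_5 \tilde f_3^{\vartheta} + q(x,y),
\]
where $f_1, f_2, f_3^\theta, \tilde f_1, \tilde f_3^\vartheta$ are the building-block biharmonic functions in the left column of Table~\ref{trans-elliptic} and $q(x,y)=\alpha(x^2+y^2)+\beta x+\gamma y+\delta$ is absorbable by a further i-M-transformation (row~6 of Table~\ref{trans-elliptic}, composition of translations and an $h$-offset in the L-model). Matching the $\mathrm{Arctan}$-coefficients gives a linear system for $\alpha_1, \alpha_2, \alpha_4$ with nonzero determinant, so these are uniquely determined by $a_1, a_2, a_3$. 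Matching the rational part $(b_1y^2+b_2xy)/(x^2+y^2)$ and the pure-quadratic polynomial part $c_1y^2+c_2xy$ against the contributions of $f_3^\theta$ and $\tilde f_3^\vartheta$ reduces, after setting $A=\alpha_3/2$ and $C=\alpha_5/(2\sqrt2)$, to the system
\[
AP_\theta + CP_\vartheta = -(b_1 y^2+b_2xy), \qquad AP_\theta+(C/2)P_\vartheta+\alpha(x^2+y^2)=c_1y^2+c_2xy,
\]
where $P_\phi=(\cos\phi\,x+\sin\phi\,y)^2$. Subtracting the two equations isolates $C\cos 2\vartheta = 2(b_1+c_1)$ and $C\sin 2\vartheta = -2(b_2+c_2)$, fixing $C$ up to sign and then $\vartheta$; the relation $A=-b_1-C$ coming from the trace then forces $\cos2\theta, \sin2\theta$ to explicit values, and a suitable choice of the sign of $C$ makes $\cos^22\theta+\sin^22\theta=1$ hold (this compatibility uses the specific coefficients $1,2$ in the two building blocks $f_3^\theta, \tilde f_3^\vartheta$). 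Because Proposition~\ref{cor1} is linear in $F$, and linear combinations of biharmonic graphs in the isotropic model correspond to convolution sums in the L-model (the remark after Proposition~\ref{observation}), this decomposition of $F$ translates into writing $\Phi$ as the required convolution~(\ref{eq-classification-elliptic}).

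For the converse implication, each of the five building block functions in Table~\ref{trans-elliptic} has the property that its restriction to any line $y=tx$ is a quadratic function of $x$, so the corresponding graph carries a pencil of i-M-circles of the form~(\ref{line}) with top view $y=tx$; this persists under linear combinations. Applying Propositions~\ref{observation} and~\ref{cor1} and Theorem~\ref{pgm} in reverse, any immersed piece of~(\ref{eq-classification-elliptic}) is L-minimal and enveloped by a family of cones whose Gaussian spherical image is the image of a pencil of lines under an i-M-transformation, i.e., an elliptic pencil.

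The main obstacle is the explicit decomposition step: the unknowns $A,C,\theta,\vartheta,\alpha$ must satisfy six scalar equations, so solvability is not automatic. The dimension count only balances because two of the six equations are in fact dependent once the signs of $C$ are chosen consistently; verifying this consistency, which rests on the particular ratio $2$ between the $(x^2+y^2)^{-1}$-coefficients in $f_3^\theta$ and $\tilde f_3^\vartheta$, is the nontrivial part. Everything else is either linear algebra (the $\mathrm{Arctan}$ block), a direct appeal to earlier results (Theorems~\ref{pgm}, \ref{i-classification-elliptic}, Propositions~\ref{observation}, \ref{cor1}), or a routine check that convolution respects L-minimality and the cone-envelope property.
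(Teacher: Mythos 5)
Your overall route is the same as the paper's: pass to the isotropic model, invoke Theorem~\ref{i-classification-elliptic} to normalize $\Phi^i$ to the form~(\ref{eq-th2-2}), decompose that function into the building blocks of Table~\ref{trans-elliptic} modulo an i-M-sphere, and transport back via the linearity of~(\ref{eq-cor1}); the paper simply asserts the decomposition, whereas you try to carry it out. Unfortunately, the explicit system you set up is wrong, and the ``compatibility'' you flag as the nontrivial step genuinely fails.

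The error is that you match the rational numerator $b_1y^2+b_2xy$ \emph{exactly}, while the absorbable term $q(x,y)=\alpha(x^2+y^2)+\beta x+\gamma y+\delta$ also shifts the rational part: adding the constant $\delta$ changes the numerator by $\delta(x^2+y^2)$, since $\delta=\delta(x^2+y^2)/(x^2+y^2)$. Your first displayed equation should therefore read $AP_\theta+CP_\vartheta=-(b_1y^2+b_2xy)+\delta(x^2+y^2)$. Without the $\delta$ the system of six scalar equations in $A,C,\theta,\vartheta,\alpha$ is inconsistent for admissible data: take $b_1=c_1=c_2=0$, $b_2=1$, i.e.\ $F=xy/(x^2+y^2)$. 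Following your own steps one gets $C=\pm2$, $\cos2\vartheta=0$, $A=-C$, and then $A[P_\theta]_0=xy$ with $|A|=2$ forces $\cos^2 2\theta+\sin^2 2\theta=1/4$, a contradiction; no choice of sign of $C$ repairs this. Yet the corollary is true for this $F$: one checks directly that
\begin{equation*}
\frac{xy}{x^2+y^2}\;=\;2\,f_3^{\pi/4}-4\sqrt{2}\,\tilde f_3^{\pi/4}-\tfrac12 ,
\end{equation*}
where $f_3^{\theta}$ and $\tilde f_3^{\vartheta}$ denote the fourth and fifth entries of Table~\ref{trans-elliptic}; the constant $-\tfrac12$ is exactly the shift your system forbids. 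The clean way to see solvability is to work modulo multiples of $x^2+y^2$ in \emph{both} the rational numerator and the polynomial part: the target then lives in the four-dimensional space of pairs of traceless quadratic forms, $a_3f_3^\theta$ sweeps out the plane $\{(-v,v)\}$ and $a_5\tilde f_3^\vartheta$ the plane $\{(-2w,w)\}$, and these span because the ratios $1$ and $2$ differ (any two distinct nonzero ratios would do). With $\delta$ restored your system becomes exactly determined and always solvable, and no sign coincidence is needed. The converse direction of your argument is fine.
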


\begin{proof}[of Corollary~\ref{classification-elliptic}]
Let us prove the direct implication.
Let $\Phi$ be a Laguerre minimal surface enveloped by an elliptic family of cones.
Then the surface $\Phi^i$ carries a family of i-M-circles such that the top view of the family is an elliptic pencil. By Theorem~\ref{i-classification-elliptic}
it follows that the surface $\Phi^i$ is i-M-equivalent to surface~(\ref{eq-th2-2}).

The right-hand side of formula~(\ref{eq-th2-2}) is a linear combination in the expressions in the left column of Table~\ref{trans-elliptic}.
Performing an i-M-transformation $z\mapsto z+a(x^2+y^2)+bx+cy+d$ one can eliminate the last expression from the linear combination.
By Proposition~\ref{cor1} and Table~\ref{isotropic-trans} transformation $\Phi^i\mapsto \Phi$ takes the functions in the left column of Table~\ref{trans-elliptic} to the surfaces in the right column. Since the expression in the right-hand side of formula~(\ref{eq-cor1}) is linear in $F$ the direct implication follows.

The converse implication follows from Propositions~\ref{cor1},~\ref{observation}, Theorem~\ref{pgm}, and Table~\ref{trans-elliptic}.
\qed\end{proof}

\bigskip

\textbf{Description of the families of cones.}
Let us describe the families of cones which make up the L-minimal surfaces in question. We view a cone as a linear family of oriented spheres.
If we map an oriented sphere with midpoint $(m_1,m_2,m_3)$ and signed radius $R$ to the point
$(m_1,m_2,m_3,R)\in \mathbb{R}^4$, we get a correspondence between cones in $\mathbb{R}^3$
and lines in $\mathbb{R}^4$. Surfaces enveloped by a family of cones can be regarded
as ruled $2$-surfaces in $\mathbb{R}^4$.
Laguerre transformations of $\mathbb{R}^3$ correspond to Lorentz transformations of $\mathbb{R}^4$ under
this mapping. This is known as the \emph{cyclographic model} of Laguerre geometry; see
\cite{pottmann-1998-alcagd} for more information.
We will refer to a ruled $2$-surface in $\mathbb{R}^4$ corresponding to
a surface enveloped by a family of cones as a \emph{cyclographic preimage}.

\begin{proposition}\label{ex-tildas}
	The cyclographic preimage 
        of the surface $\tilde{\mathbf{r}}_1(u,v)$ can be parametrized as
\begin{multline}\label{eq1-tilda-cycl} \tilde{\mathbf{R}}_1(\varphi,\lambda)=\left(0,\,0,\,-3\varphi,\,\varphi\right)\!/2\sqrt{2}
        \,+\,\lambda\left(\sin\varphi,\,\cos\varphi,\,0,\,0\right).\\[-7pt]
\end{multline}
The cyclographic preimage of the surface $\tilde{\mathbf{r}}_3(u,v)$ can be parametrized as
\begin{multline} \tilde{\mathbf{R}}_3(\varphi,\lambda)=\left(0,\,0,\,3\cos^2\varphi,-\cos^2\varphi\right)\!/4\sqrt{2}
	\,+\, \lambda\left(\sin\varphi,\,\cos\varphi,\,0,\,0\right).\\[-7pt]
\end{multline}
\end{proposition}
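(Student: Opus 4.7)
The plan is to translate the Laguerre transformation $\Lambda$ into its cyclographic action on $\mathbb{R}^4$ and apply this directly to the cyclographic preimage of each ruling of $\mathbf{r}_1$ and $\mathbf{r}_3$. Each such ruling is a line in $\mathbb{R}^3$, that is, a degenerate cone whose cyclographic preimage is a line in the hyperplane $R=0$.

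To derive the cyclographic action, I would start from the defining action of $\Lambda$ on oriented planes and use the tangency relation $\vec n\cdot M+h=R$ between an oriented plane (in Hesse form) and an oriented sphere $(M,R)$. The requirement that every tangent plane of $(M,R)$ map under $\Lambda$ to a plane tangent to a common sphere $(M',R')$ forces $(M',R')$ to be a linear image of $(M,R)$. After normalizing the image plane equation to Hesse form and matching coefficients of the unit-normal parameter, one obtains the linear map
\[
(M_1,M_2,M_3,R)\mapsto \bigl(M_1/2,\,M_2/2,\,(3M_3-R)/(4\sqrt 2),\,(3R-M_3)/(4\sqrt 2)\bigr),
\]
which is the composition of a horizontal homothety of ratio $1/2$ with a Lorentz-type boost on the $(M_3,R)$-plane.

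Applying this linear map to the cyclographic preimage of each ruling gives the stated formulas. For $\mathbf{r}_1$ the ruling at parameter $\varphi$ from~(\ref{eq1-kinem}) is $\{(\lambda\sin\varphi,\lambda\cos\varphi,-2\varphi,0):\lambda\in\mathbb{R}\}$; its direction $(\sin\varphi,\cos\varphi,0,0)$ lies in the $(M_1,M_2)$-plane and is merely halved, a factor absorbed by rescaling $\lambda$, while the base point $(0,0,-2\varphi,0)$ is sent to $(0,0,-3\varphi,\varphi)/(2\sqrt 2)$, exactly matching the formula for $\tilde{\mathbf{R}}_1$. The computation for $\mathbf{r}_3$ from~(\ref{eq3-kinem}) is identical, with the $M_3$-coordinate of the base point equal to $\cos^2\varphi=(\cos 2\varphi+1)/2$; the boost sends $(\cos^2\varphi,0)$ to $(3\cos^2\varphi,-\cos^2\varphi)/(4\sqrt 2)$, matching $\tilde{\mathbf{R}}_3$.

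The main obstacle is the first step, deriving the explicit cyclographic linear map of $\Lambda$ from its plane-level definition; this requires carefully tracking the normalization factor arising when the image plane equation is put in Hesse form and then matching coefficients over the two-parameter family of unit normals on a sphere. Once the linear map on $\mathbb{R}^4$ is written down, the rest of the proof is an immediate application to two explicit parametric lines together with a trivial rescaling of the ruling parameter~$\lambda$.
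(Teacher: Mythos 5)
Your route is genuinely different from the paper's. The paper never writes down the cyclographic action of $\Lambda$: it reads the isotropic image $z=(x^2+y^2-2)\mathrm{Arctan}(y/x)/2\sqrt{2}$ of $\tilde{\mathbf{r}}_1$ off Table~\ref{trans-elliptic}, slices that graph into the i-circles cut out by the planes $x\sin\varphi+y\cos\varphi=0$, and converts each i-circle back into a cone (i.e.\ a line in $\mathbb{R}^4$) by matching coefficients in formula~(\ref{iso-sphere}); the sphere $(0,0,-3\varphi,\varphi)/2\sqrt{2}$ and the ideal direction $(\sin\varphi,\cos\varphi,0,0)$ drop out of that matching. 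You instead push the rulings of $\mathbf{r}_1$ and $\mathbf{r}_3$, viewed as lines in the hyperplane $R=0$, through the linear map that $\Lambda$ induces on $\mathbb{R}^4$. Your map $(M_1,M_2,M_3,R)\mapsto\bigl(M_1/2,\,M_2/2,\,(3M_3-R)/4\sqrt{2},\,(3R-M_3)/4\sqrt{2}\bigr)$ is indeed the right one --- it is exactly what one obtains by transporting, via~(\ref{iso-sphere}), the i-M-transformation that turns the untilded entries of Table~\ref{trans-elliptic} into the tilded ones --- and both ruling computations, including the harmless rescaling of $\lambda$, check out. Your version has the merit of exhibiting $\Lambda$ explicitly as a scaled Lorentz boost, which is the structure behind Theorem~\ref{cyclo-elliptic}.

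The weak point is precisely the step you flag as the main obstacle, and it is worse than just ``unexecuted.'' If you derive the map from the plane-level formula of \S\ref{ssec:laguerre} literally --- image plane $n_1x+n_2y+\frac12(3n_3+1)z+h=0$, renormalized by $\nu=\frac12\sqrt{5n_3^2+6n_3+5}$ --- the coefficient matching you describe does not close: the tangency condition for the image planes takes the form (affine function of $n$) $=\nu R'$, and $\nu$ is not affine in $n_3$, so the images of the tangent planes of a sphere do not envelope any sphere. In other words, that explicit plane-level formula is not consistent with Table~\ref{isotropic-trans} and Table~\ref{trans-elliptic} (one already sees a mismatch on horizontal planes $z+h=0$), and the derivation as you describe it would stall there rather than produce your matrix. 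The reliable way to pin the matrix down is the one the paper implicitly uses: characterize $\Lambda$ by its action in the isotropic model (the transformation carrying $(x^2+y^2-1)\mathrm{Arctan}(y/x)$ to the table entry for $\tilde{\mathbf{r}}_1$) and conjugate that through~(\ref{iso-sphere}). With the matrix obtained this way, the remainder of your argument is complete and correct.
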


\begin{proof}[of Proposition~\ref{ex-tildas}]
Let us find the cyclographic preimage of the surface $\tilde{\mathbf{r}}_1(u,v)$. Consider the image $\Phi^i$ of the surface in the isotropic model. By Table~\ref{trans-elliptic} the image $\Phi^i$ has the equation $z=(x^2+y^2-2)\mathrm{Arctan}(y/x)/2\sqrt{2}$. Thus for each $\varphi\in\mathbb{R}$ the surface $\Phi^i$ contains the i-circle
\begin{equation*}
\begin{cases}
z=-(x^2+y^2-2)\varphi/2\sqrt{2},\\
0=x\sin\varphi+y\cos\varphi.
\end{cases}
\end{equation*}
By formula~(\ref{iso-sphere}) this i-circle is the image (in the isotropic model) of the limit of the sequence of the cones touching the two spheres with cyclographic coordinates $\left(0,\,0,\,-3\varphi,\,\varphi\right)/2\sqrt{2}$
and $N\left(\sin\varphi,\,\cos\varphi,\,0,\,0\right)$, where $N\to\infty$. Thus the cyclographic preimage of the surface $\tilde{\mathbf{r}}_1(u,v)$ is given by the required formula~(\ref{eq1-tilda-cycl}). The cyclographic preimages of the surface $\tilde{\mathbf{r}}_3(u,v)$ and all the other surfaces below are computed analogously.
\qed\end{proof}

\begin{theorem}\label{cyclo-elliptic}
	The cyclographic preimage of a Laguerre minimal surface enveloped by
	an elliptic family of cones is up to Lorentz transformations a piece of the surface
	\begin{multline}\label{eq-th-elliptic-cones}
		\mathbf{R}(\varphi,\lambda)=
		\left(\, A\varphi ,\, B\varphi,\,
		C\varphi + D\cos2\varphi,\,
		E\varphi + F\cos2\varphi + G\sin2\varphi \, \right)
		+\lambda \left(\, \sin\varphi,\, \cos\varphi,\, 0,\, 0\, \right)\\[-7pt]
	\end{multline}
	for some $A,B,C,D,E,F,G\in\mathbb{R}$.
\end{theorem}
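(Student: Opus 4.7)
The plan is to combine Corollary~\ref{classification-elliptic} with the fact that under the cyclographic model Laguerre transformations of $\mathbb{R}^3$ correspond to Lorentz transformations of $\mathbb{R}^4$, and convolution of Legendre surfaces corresponds to componentwise addition of the $\mathbb{R}^4$-parametrizations (once the rulings of the cyclographic preimages are aligned, i.e.\ have identical direction vectors). By Corollary~\ref{classification-elliptic}, up to a Laguerre transformation our surface is the convolution $a_1\mathbf{r}_1+a_2\mathbf{r}_2+a_3\mathbf{r}_3^{\theta}+a_4\tilde{\mathbf{r}}_1+a_5\tilde{\mathbf{r}}_3^{\vartheta}$, so it suffices to compute the cyclographic preimage of this convolution up to Lorentz transformation and reparametrization.

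Next, I would assemble the five cyclographic preimages. For $\mathbf{r}_1,\mathbf{r}_2,\mathbf{r}_3$ the rulings are Euclidean lines, so their cyclographic preimages are $\mathbf{R}_1,\mathbf{R}_2,\mathbf{R}_3$ from Examples~\ref{ex1}--\ref{ex3} with a zero fourth coordinate appended. The preimages of $\tilde{\mathbf{r}}_1$ and $\tilde{\mathbf{r}}_3$ are furnished by Proposition~\ref{ex-tildas}. The rotation $R^{\theta}$ applied to $\mathbf{R}_3$ (respectively $R^{\vartheta}$ applied to $\tilde{\mathbf{R}}_3$) rotates only the first two coordinates of $\mathbb{R}^4$; combining it with the parameter shift $\varphi\mapsto\varphi-\theta$ (resp.\ $\varphi\mapsto\varphi-\vartheta$), exactly as in Proposition~\ref{rulings}, aligns every ruling direction to $(\sin\varphi,\cos\varphi,0,0)$, so the five preimages may be legally summed.

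Adding them gives a parametrization $\mathbf{R}(\varphi,\lambda)=\mathbf{c}(\varphi)+\lambda(\sin\varphi,\cos\varphi,0,0)$ where the first component of $\mathbf{c}$ is $a_2\varphi$, the second is the constant $a_2$, the third is $(-2a_1-3a_4/2\sqrt{2})\varphi+(a_3/2)\cos2(\varphi-\theta)+(3a_5/8\sqrt{2})\cos2(\varphi-\vartheta)+\mathrm{const}$, and the fourth is $(a_4/2\sqrt{2})\varphi-(a_5/8\sqrt{2})\cos2(\varphi-\vartheta)+\mathrm{const}$, using $\cos^2(\varphi-\vartheta)=(1+\cos2(\varphi-\vartheta))/2$. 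I would then reduce this to the target form in three steps, each amounting to a Lorentz transformation or a reparametrization: $(i)$ translations in all four coordinates of $\mathbb{R}^4$ kill the constants; $(ii)$ a rotation $R^{\alpha}$ about the $z$-axis (a Lorentz transformation fixing coordinates~3 and 4) composed with the substitution $\varphi\mapsto\varphi+\alpha$ restores the ruling direction to $(\sin\varphi,\cos\varphi,0,0)$ and turns $(a_2\varphi,a_2)$ into $(A\varphi,B\varphi)$ modulo a further constant absorbed by $(i)$, with $A^2+B^2=a_2^2$; $(iii)$ a final shift $\varphi\mapsto\varphi+\psi/2$, with $\psi$ chosen so that the combination $(a_3/2)\cos2(\varphi-\theta)+(3a_5/8\sqrt{2})\cos2(\varphi-\vartheta)$ collapses to a pure $D\cos2\varphi$ in the third coordinate, leaves the fourth coordinate oscillation as $F\cos2\varphi+G\sin2\varphi$ with $F,G$ independent because the two elementary waves have different phases. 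This is precisely the shape of~(\ref{eq-th-elliptic-cones}).

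The hardest step will be $(iii)$: one has to verify that phase-locking the two $\cos$-waves in the third coordinate does not inadvertently destroy the form of the fourth coordinate, and that conversely the coefficients $F,G$ there remain genuinely free as $a_5,\vartheta$ range over $\mathbb{R}$. The cleanest way is to note that $(a_3,\theta)$ and $(a_5,\vartheta)$ contribute independently to the two oscillatory coordinates, so after $(iii)$ the seven parameters $a_1,a_2,a_3,a_4,a_5,\theta,\vartheta$ map surjectively onto $(A,B,C,D,E,F,G)$, which exhausts the stated family and completes the proof.
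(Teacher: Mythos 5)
Your proposal is correct and follows essentially the same route as the paper's own proof: reduce via Corollary~\ref{classification-elliptic} to the convolution of the five building blocks, replace it by the sum of their cyclographic preimages with aligned rulings (Examples~\ref{ex1}--\ref{ex3} and Proposition~\ref{ex-tildas}), and normalize by translations, a rotation about the $z$-axis and a phase shift in $\varphi$ --- you merely spell out the trigonometric bookkeeping that the paper compresses into ``such a parametrization gives the same class of surfaces.'' Two cosmetic remarks: the shift in your step $(iii)$ must be accompanied by the same compensating rotation as in step $(ii)$ so that the ruling direction stays equal to $(\sin\varphi,\cos\varphi,0,0)$ (this changes only the first two, linear-in-$\varphi$, coordinates and so preserves the form), and the surjectivity onto $(A,\dots,G)$ you worry about at the end is not required, since the theorem asserts only that the preimage has the stated form for \emph{some} choice of the constants.
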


In other words, an L-minimal surface enveloped by an elliptic family of cones can be interpreted as a frequency $1$ rotation of a line in a plane, plus a frequency $2$ ``harmonic oscillation'', and a constant-speed translation; this time in $\mathbb{R}^4$.

\begin{proof}
Notice that the cyclographic preimage of a ruled surface in $\mathbb{R}^3$ is the surface itself,
if $\mathbb{R}^3$ is identified with subspace $R=0$ of $\mathbb{R}^4$.
Analogously to the proof of Proposition~\ref{rulings} one can show that the cyclographic preimage
of surface~(\ref{eq-classification-elliptic}) can be parametrized via
\begin{multline}\label{eq-proof-cyclo}
\mathbf{R}(\varphi,\lambda)=a_1\mathbf{R}_1(\varphi,\lambda)+a_2\mathbf{R}_2(\varphi,\lambda)
+a_3R^{\theta}\mathbf{R}_3(\varphi-\theta,\lambda)+a_4\tilde{\mathbf{R}}_1(\varphi,\lambda)
+a_5R^{\vartheta}\tilde{\mathbf{R}}_3(\varphi-\vartheta,\lambda),\\[-7pt]
\end{multline}
with the same $a_1,a_2,a_3,a_4,a_5,\theta ,\vartheta \in\mathbb{R}$.
By Examples~\ref{ex1}--\ref{ex3} and Proposition~\ref{ex-tildas} such a parametrization gives the same class of surfaces as the required parametrization~(\ref{eq-th-elliptic-cones}).
\qed\end{proof}



\subsection{Hyperbolic families of cones}
Here we consider the second kind of L-minimal surfaces enveloped by a family of
cones.

\begin{definition} \label{def-hyperbolic} A \emph{hyperbolic} pencil of circles in the plane (or in a sphere) is the set of all the circles
orthogonal to two fixed crossing circles.
A $1$-parametric family of cones (possibly degenerating to cylinders or lines) in space is
\emph{hyperbolic} if the Gaussian spherical images of the cones form a hyperbolic pencil of circles in the unit sphere.
\end{definition}

\begin{theorem} \label{i-classification-hyperbolic}
Let $\Phi^i$ be an i-Willmore surface carrying a family of i-M-circles.
Suppose that the top view of the family is a hyperbolic pencil of circles.
Then the surface $\Phi^i$ is i-M-equivalent to a piece of the surface
\begin{multline}
z=\left(a_1(x^2+y^2)+a_2x+a_3\right)\ln (x^2+y^2)+
\frac{b_1y+b_2x}{x^2+y^2}+(c_1y+c_2x)(x^2+y^2)\label{eq-th2-3}\\[-10pt]
\end{multline}
for some $a_1,a_2,a_3,b_1,b_2,c_1,c_2\in\mathbb{R}$.
\end{theorem}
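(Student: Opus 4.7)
The plan is to follow the same blueprint as the proof of Theorem~\ref{i-classification-elliptic}, but now reducing the hyperbolic pencil to the pencil of concentric circles instead of to a pencil of lines. First I would apply an i-M-transformation whose top view is a planar Möbius map sending the two limit points of the hyperbolic pencil to $0$ and $\infty$; this carries the top view of the family of i-M-circles to the family of concentric circles $x^2+y^2=t^2$. By Theorem~\ref{cl3} the transformed surface $\Phi^i$ remains a graph of a biharmonic function $F$ on an annular region $U\subset\mathbb{R}^2\setminus\{(0,0)\}$, and the family of i-M-circles on $\Phi^i$ is preserved.

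Next I would derive the functional constraint on $F$. An i-M-circle whose top view is the circle $x^2+y^2=t^2$ is the intersection of two i-M-spheres whose equations differ only in the coefficient of $x^2+y^2$ and in the constant term; on this top view circle the $z$-coordinate is therefore a restriction of a linear function $bx+cy+d$. Hence $F\left|_{\,r=t}\right.$ is linear in $x,y$ for every admissible $t$, which in polar coordinates forces
\begin{equation*}
F(r,\phi)=A(r)+B(r)\cos\phi+C(r)\sin\phi.
\end{equation*}

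Substituting this ansatz into $\Delta^2F=0$ and using that the polar Laplacian decouples the angular modes yields one Euler-type fourth-order ODE for each of $A,B,C$. A routine computation (analogous to Proposition~\ref{auxillary}) gives the bases $\{1,\,r^2,\,\log r,\,r^2\log r\}$ for $A(r)$ and $\{r,\,1/r,\,r^3,\,r\log r\}$ for each of $B(r)$ and $C(r)$. Rewritten in Cartesian coordinates, this expresses $F$ as a linear combination of the seven functions appearing in~(\ref{eq-th2-3}) together with the four elementary terms $1,\,x,\,y,\,x^2+y^2$ and the additional term $y\log(x^2+y^2)$.

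Finally I would reduce the form. The i-M-transformations $z\mapsto z+h$, $z\mapsto z+ax+by$ and $z\mapsto z+x^2+y^2-1$ from Table~\ref{isotropic-trans} cancel the four lowest-order terms $1,\,x,\,y,\,x^2+y^2$, while an appropriate rotation $R^\theta$ around the $z$-axis combines $x\log(x^2+y^2)$ and $y\log(x^2+y^2)$ so that only the first survives. This produces precisely the expression~(\ref{eq-th2-3}). The main obstacle I expect is the bookkeeping in the ODE step, where one must verify that no independent biharmonic solutions beyond the listed basis appear in the $n=\pm1$ modes; the multi-valuedness issues that forced the $\mathrm{Arctan}$ terms in the elliptic case do not recur here, since every element of the polar basis is single-valued on any annulus.
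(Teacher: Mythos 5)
Your proposal is correct and follows essentially the same route as the paper's proof: an i-M-transformation to the pencil of concentric circles, linearity of $F$ on each circle, the polar ansatz $F=A(r)+B(r)\cos\phi+C(r)\sin\phi$ leading to Euler-type ODEs with exactly the bases you list, and normalization by a rotation plus the i-M-shear $z\mapsto z-\gamma_2(x^2+y^2)-\alpha_1x-\beta_1y-\gamma_1$. The only differences are cosmetic (the paper parametrizes the circles as $x^2+y^2=t$ and states the linearity of the restrictions without the explicit two-sphere argument you give).
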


\begin{proof}[of Theorem~\ref{i-classification-hyperbolic}]
Perform an i-M-transformation taking the hyperbolic pencil of circles in the top view into the pencil of concentric circles $x^2+y^2=t$, where $t$ runs through a segment $J\subset\mathbb{R}$.
Denote by $z=F(x,y)$ the surface obtained from the surface $\Phi^i$ by the transformation, where $F$ is a biharmonic function defined in a region $U\subset\mathbb{R}^2$.
Since an i-M-transformation takes i-M-circles to i-M-circles it follows that
the restriction of the function $F$ to (an appropriate arc of) each circle $x^2+y^2=t$, where $t\in J$, is a linear function.

Without loss of generality assume that $(0,0)\not\in U$. Consider the polar coordinates in $U$.
Then $F(r,\phi)=a(r)\cos\phi+b(r)\sin\phi+c(r)$.
Thus
\begin{multline*}
r^4\Delta^2F=r^4F_{rrrr}+2r^3F_{rrr}-r^2F_{rr}+rF_r+2r^2F_{rr\phi\phi}-2rF_{r\phi\phi}+4F_{\phi\phi}+F_{\phi\phi\phi\phi}=\\
\begin{aligned}
&=\left(r^{4}a^{(4)}+2r^3a^{(3)}-3r^2a''+3ra'-3a\right)\cos\phi+\\
&+\left(r^{4}b^{(4)}+2r^3b^{(3)}-3r^2b''+3rb'-3b\right)\sin\phi+\\
&+\left(r^{4}c^{(4)}+2r^3c^{(3)}-r^2c''+rc'\right).
\end{aligned}
\end{multline*}
Since $\Delta^2F=0$ it follows that the coefficients of this trigonometric polynomial vanish.
Solving the obtained ordinary differential equations we get:
\begin{align*}
a(r)&=\alpha_1r+\alpha_2r\ln r+\alpha_3/{r}+\alpha_4r^3;\\
b(r)&=\beta_1r+\beta_2r\ln r+\beta_3/{r}+\beta_4r^3;\\
c(r)&=\gamma_1+\gamma_2r^2+\gamma_3\ln r+\gamma_4r^2\ln r.
\end{align*}
One can achieve $\beta_2=0$ by an appropriate rotation of the coordinate system around the origin.
One can also achieve $\alpha_1=\beta_1=\gamma_1=\gamma_2=0$ by
the i-M-transformation $z\mapsto z-\gamma_2(x^2+y^2)-\alpha_1x-\beta_1y-\gamma_1$
Returning to the \mscomm{Cartesian} coordinate system we get the 
required formula. 
\qed\end{proof}

\begin{table}\caption{Biharmonic functions whose restrictions to each circle $x^2+y^2=t$, $t\in I$, are linear functions and corresponding Laguerre minimal surfaces}
\label{trans-hyperbolic}
\begin{tabular}{|l|r|}
\hline
Biharmonic function & Laguerre minimal surface\\
\hline
$(x^2+y^2-1)(\ln(x^2+y^2)-2)/2-2$                    &  $\mathbf{r}_4(u,v)$ \\
$(x^2+y^2-2)(\ln(x^2+y^2)-2-\ln2)/4\sqrt{2}-\sqrt{2}$&  $\tilde{\mathbf{r}}_4(u,v)$\\
$x\ln(x^2+y^2)+x(x^2+y^2-1)$                         &  $\mathbf{r}_5(u,v)$ \\
$(x\cos\theta +y\sin\theta )(x^2+y^2-2+1/(x^2+y^2))$&  $\mathbf{r}^{\theta}_6(u,v)$          \\
$(x\cos\vartheta +y\sin\vartheta )(x^2+y^2-4+4/(x^2+y^2))/4$&
$\tilde{\mathbf{r}}^{\vartheta}_6(u,v)$      \\
$a(x^2+y^2)+bx+cy+d$                                    & oriented sphere\\
\hline
\end{tabular}
\end{table}

An L-minimal surface enveloped by a hyperbolic family of cones
is obtained from the surface~(\ref{eq-th2-3}) by ``transformation''~(\ref{eq-cor1}).
Let us give some typical examples obtained from the graphs of the functions in the left column of Table~\ref{trans-hyperbolic}; see also Figure \ref{figure-hyperbolic}.
These examples are building blocks forming all the surfaces in question.

\begin{example} \label{ex4}The first example is the catenoid.
It can be parametrized as
\begin{multline}\label{eq4}
\mathbf{r}_4(u,v)=\left(\, u + \frac{u}{u^2 + v^2},\, v + \frac{v}{u^2 + v^2},\, \ln(u^2 + v^2)\, \right).\\[-7pt]
\end{multline}
Its cyclographic preimage can be written as
\begin{multline}
\mathbf{R}_4(\varphi,\lambda) =
\left(\, 0,\, 0,\, -2\varphi ,\, -2\, \right) + \lambda \left(\, 0,\, 0,\, \cosh\varphi,\,
\sinh\varphi\, \right).\\[-7pt]
\end{multline}
\end{example}

\begin{example} \label{ex5} Another building block is given by the surface $\mathbf{r}_5$
parametrized by:
\begin{multline}\label{eq5}
\mathbf{r}_5(u,v)=\left(\, (u^2 - v^2)\left(1 - \frac{1}{u^2 + v^2}\right) - \ln(u^2 + v^2),\, 2 u v \left(1 - \frac{1}{u^2 + v^2}\right),\, 4 u\,\right).\\[-7pt]
\end{multline}
Its cyclographic preimage is the surface parametrized by
\begin{multline}
\mathbf{R}_5(\varphi,\lambda) =
\left(\, 1 - e^{-2\varphi}+2\varphi,\, 0,\, 0,
\, 0\, \right) \, +\, \lambda \left(\, 0,\, 0,\, \cosh\varphi,\,
\sinh\varphi\, \right).\\[-7pt]
\end{multline}
\end{example}

\begin{example} \label{ex6} Finally we have the surface $\mathbf{r}_6$
given implicitly by $z^2(z^2-16x)=64y^2$. In parametric form it can be written as:
\begin{multline}\label{eq6}
\mathbf{r}_6(u,v)=\left(\,
(u^2-v^2) \left(1-\dfrac{1}{u^2 + v^2}\right)^2,\,
2 u v \left(1-\dfrac{1}{u^2 + v^2}\right)^2, \,
4 u \left(1-\dfrac{1}{u^2 + v^2}\right)
\,\right).\\[-7pt]
\end{multline}
This surface is L-minimal and i-Willmore simultaneously. Its cyclographic preimage can be written as
\begin{multline}
\mathbf{R}_6(\varphi,\lambda) =
\left(\, 2 - 2\cosh 2\varphi,\, 0,\, 0,
\, 0\, \right)  +\lambda \left(\, 0,\, 0,\, \cosh\varphi,\,
\sinh\varphi\, \right).\\[-7pt]
\end{multline}
\end{example}

\begin{figure}[htbp]

\begin{overpic}[width=.7\textwidth, angle=-90]{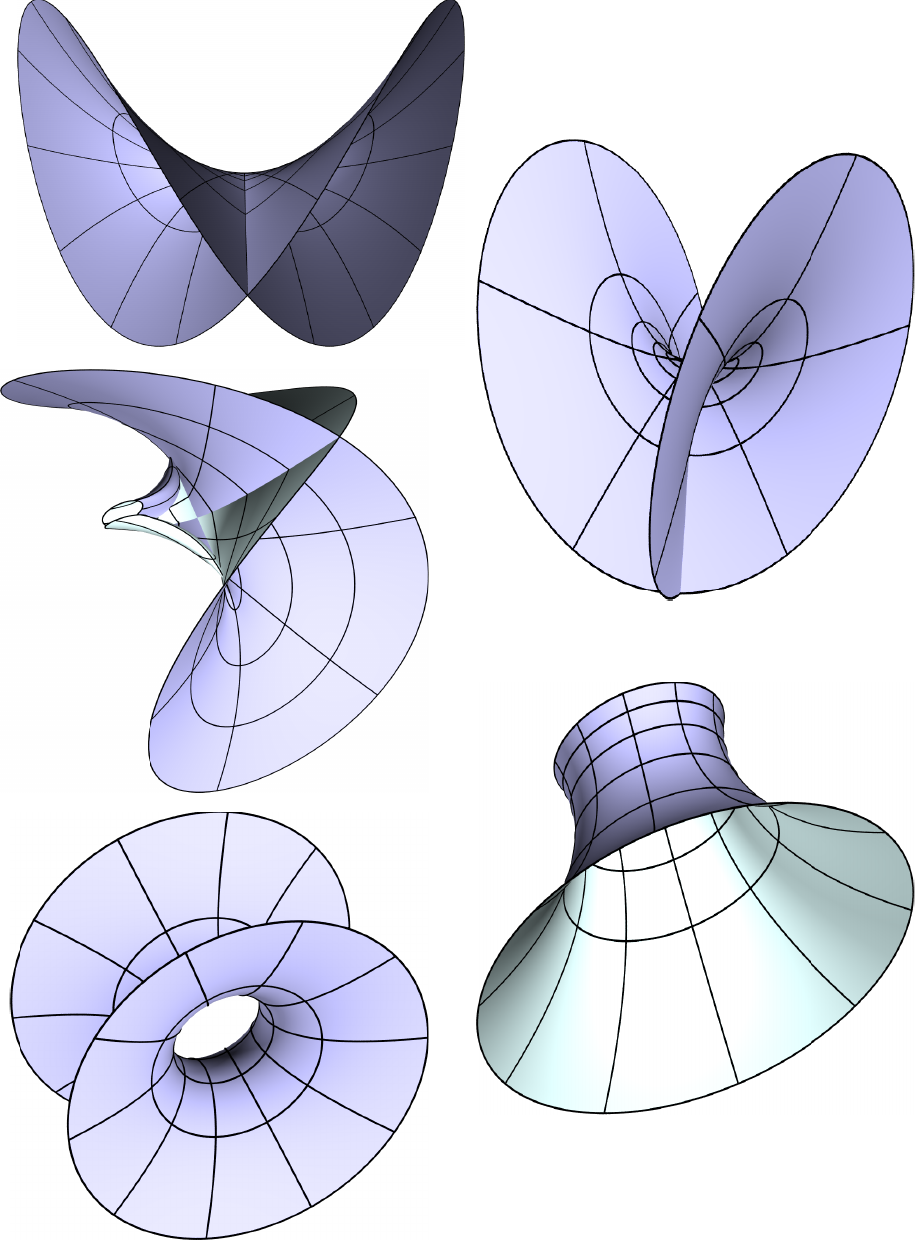}
\put(28,41){$\mathbf{r}_4$}
\put(62,41){$\mathbf{r}_5$}
\put(95,45){$\mathbf{r}_6$}
\put(90,15){$\tilde{\mathbf{r}}_6$}
\put(38,10){$\tilde{\mathbf{r}}_4$}
\end{overpic}
 \caption{Building blocks for L-minimal surfaces enveloped by a hyperbolic family of cones. Starting from the top left we show the surfaces $\mathbf{r}_4$, $\mathbf{r}_5$, $\mathbf{r}_6$,
 $\tilde{\mathbf{r}}_{6}$ and $\tilde{\mathbf{r}}_{4}$ in clockwise direction. For details refer to Examples \ref{ex4}, \ref{ex5}, \ref{ex6} and Section~\ref{ssec:laguerre}.}
 \label{figure-hyperbolic}
\end{figure}



An L-minimal surface enveloped by a hyperbolic family of cones can be obtained from Examples~\ref{ex4}--\ref{ex6} by performing L-transformations and taking special convolution surfaces; see Figure~\ref{figure-hyperbolic-general}:

\begin{corollary}[Classification for hyperbolic type]\label{classification-hyperbolic}
A Laguerre minimal surface
enveloped by a hyperbolic family of cones 
is Laguerre equivalent to a piece of the surface
\begin{multline} \label{eq-classification-hyperbolic}
\mathbf{r}(u,v)=a_1\mathbf{r}_4(u,v)+a_2\mathbf{r}_5(u,v)+
a_3\mathbf{r}^{\theta}_6(u,v)+
a_4\tilde{\mathbf{r}}_4(u,v)+
a_5\tilde{\mathbf{r}}^{\vartheta}_6(u,v)\\[-10pt]
\end{multline}
for some $a_1,a_2,a_3,a_4,a_5,\theta ,\vartheta \in\mathbb{R}$.
Conversely, an immersed piece of surface~\textup{(\ref{eq-classification-hyperbolic})} is Laguerre minimal and is enveloped by a hyperbolic family of cones.
\end{corollary}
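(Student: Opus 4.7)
The plan is to mirror the proof of Corollary~\ref{classification-elliptic}, replacing the elliptic pencil by a hyperbolic one and invoking Theorem~\ref{i-classification-hyperbolic} in place of Theorem~\ref{i-classification-elliptic}. For the direct implication, let $\Phi$ be an L-minimal surface enveloped by a hyperbolic family of cones. By Proposition~\ref{observation}, the image $\Phi^i$ in the isotropic model is the union of a family of i-M-circles, and by Definition~\ref{def-hyperbolic} the top view of this family is a hyperbolic pencil of circles. Hence Theorem~\ref{i-classification-hyperbolic} applies and shows that, after an i-M-transformation, $\Phi^i$ is a graph of the function given in~(\ref{eq-th2-3}).

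Next I would identify the right-hand side of~(\ref{eq-th2-3}) as a linear combination of the entries in the left column of Table~\ref{trans-hyperbolic}, up to an additive term of the form $a(x^2+y^2)+bx+cy+d$. Such an affine-plus-quadratic term corresponds in Laguerre geometry to an oriented sphere (the last row of Table~\ref{trans-hyperbolic}) and can be absorbed by an i-M-transformation $z\mapsto z-a(x^2+y^2)-bx-cy-d$, which, by Table~\ref{isotropic-trans}, corresponds to an L-transformation. Since the reconstruction formula~(\ref{eq-cor1}) from Proposition~\ref{cor1} is linear in~$F$, applying it term by term turns the linear combination of biharmonic ancestors into the convolution surface~(\ref{eq-classification-hyperbolic}) of the building blocks $\mathbf{r}_4,\mathbf{r}_5,\mathbf{r}^{\theta}_6,\tilde{\mathbf{r}}_4,\tilde{\mathbf{r}}^{\vartheta}_6$ listed in the right column.

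For the converse implication I would argue directly from the same correspondence. Each row of Table~\ref{trans-hyperbolic} associates a biharmonic function with an L-minimal surface via~(\ref{eq-cor1}); since any real linear combination of biharmonic functions is biharmonic, Theorem~\ref{pgm} guarantees that an immersed piece of the convolution surface on the right-hand side of~(\ref{eq-classification-hyperbolic}) is L-minimal. To see that it is enveloped by a hyperbolic family of cones, observe that every biharmonic ancestor in Table~\ref{trans-hyperbolic} is linear in restriction to each concentric circle $x^2+y^2=t$, so the same holds for their linear combination; Proposition~\ref{observation} then produces a family of i-M-circles in $\Phi^i$ whose top view is the pencil of concentric circles, a hyperbolic pencil after undoing the preparatory i-M-transformation.

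The only genuine piece of work, rather than a true obstacle, is the bookkeeping needed to verify Table~\ref{trans-hyperbolic}: one must substitute each biharmonic function listed in the left column into~(\ref{eq-cor1}) and check that the resulting parametrization agrees with the one given for $\mathbf{r}_4,\mathbf{r}_5,\mathbf{r}_6$ in Examples~\ref{ex4}--\ref{ex6}; for the tilded entries one additionally applies the Laguerre transformation $\Lambda$ of \S\ref{ssec:laguerre} and reads off the scaling factors $1/2\sqrt{2}$ and $1/4$ that appear, together with the ancillary constants that keep the graph a single-valued function. Once these correspondences are in hand, linearity of~(\ref{eq-cor1}) in~$F$ yields both directions of the corollary essentially for free.
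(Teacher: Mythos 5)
Your proposal is correct and follows essentially the same route as the paper's own proof: reduce to the isotropic model, apply Theorem~\ref{i-classification-hyperbolic} to obtain~(\ref{eq-th2-3}), absorb the sphere term by an i-M-transformation, and use the linearity of~(\ref{eq-cor1}) in $F$ together with Table~\ref{trans-hyperbolic} for both implications. The only substance beyond the paper's terse write-up is the Table~\ref{trans-hyperbolic} verification, which you correctly identify as routine bookkeeping.
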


\begin{proof}[of Corollary~\ref{classification-hyperbolic}]
Let us prove the direct implication.
Let $\Phi$ be an L-minimal surface enveloped by a hyperbolic family of cones.
Then the surface $\Phi^i$ carries a family of i-M-circles such that the top view of the family is a hyperbolic pencil. By Theorem~\ref{i-classification-hyperbolic}
it follows that the surface $\Phi^i$ is i-M-equivalent to surface~(\ref{eq-th2-3}).

The right-hand side of formula~(\ref{eq-th2-3}) is a linear combination in the expressions in the left column of Table~\ref{trans-hyperbolic}.
Performing an i-M-transformation $z\mapsto z+a(x^2+y^2)+bx+cy+d$ one can eliminate the last expression from the linear combination.
By Proposition~\ref{cor1} and Table~\ref{isotropic-trans} transformation~(\ref{eq-cor1}) takes the functions in the left column of Table~\ref{trans-hyperbolic} to the surfaces in the right column. Since the transformation~(\ref{eq-cor1}) is linear in $F$ the direct implication follows.

The converse implication follows from Propositions~\ref{cor1},~\ref{observation}, Theorem~\ref{pgm}, and Table~\ref{trans-hyperbolic}.
\qed\end{proof}

\bigskip

There is a simple parametrization of the cyclographic preimage
(the proof is analogous to the proof of Theorem~\ref{cyclo-elliptic}).

\begin{theorem}
	The cyclographic preimage of a Laguerre minimal surface enveloped by
	a hyperbolic family of cones is up to Lorentz transformations a piece of the surface
	\begin{multline}\label{eq-th-hyperbolic-cones}
		\mathbf{R}(\varphi,\lambda)=
		\left(\, A\varphi + B\cosh2\varphi,\,
		C\varphi + D\cosh2\varphi + E\sinh2\varphi,\,
		F\varphi ,\, G\varphi \, \right)
		+\lambda \left(\, 0,\, 0,\, \cosh\varphi,\, \sinh\varphi\, \right)\\[-7pt]
	\end{multline}
	for some $A,B,C,D,E,F,G\in\mathbb{R}$.
\end{theorem}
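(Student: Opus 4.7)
The plan is to imitate the proof of Theorem~\ref{cyclo-elliptic} step by step. By Corollary~\ref{classification-hyperbolic}, an L-minimal surface enveloped by a hyperbolic family of cones is L-equivalent to a convolution of the five building blocks $\mathbf{r}_4,\mathbf{r}_5,\mathbf{r}_6^\theta,\tilde{\mathbf{r}}_4,\tilde{\mathbf{r}}_6^\vartheta$ as in formula~(\ref{eq-classification-hyperbolic}). Since Laguerre transformations correspond precisely to Lorentz transformations of $\mathbb{R}^4$ in the cyclographic model, it suffices to produce a ruled parametrization of the cyclographic preimage of this particular convolution surface.

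Next I would compute the cyclographic preimages of the two tilde building blocks by an argument directly analogous to Proposition~\ref{ex-tildas}. For each of $\tilde{\mathbf{r}}_4$ and $\tilde{\mathbf{r}}_6$ the isotropic graph is given by Table~\ref{trans-hyperbolic}. For a fixed value of the family parameter $\varphi$ this graph contains one i-M-circle whose two defining i-M-spheres can be read off using formula~(\ref{iso-sphere}); this identifies two oriented spheres in $\mathbb{R}^3$ whose corresponding points in $\mathbb{R}^4$ span the line representing one cone of the hyperbolic family. Letting $\varphi$ vary, this yields a ruled parametrization of each tilde preimage with rulings in the direction $(0,0,\cosh\varphi,\sinh\varphi)$, matching the rulings of $\mathbf{R}_4,\mathbf{R}_5,\mathbf{R}_6$ from Examples~\ref{ex4}--\ref{ex6}.

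The third step is the $\mathbb{R}^4$ analog of Proposition~\ref{rulings}: the cyclographic preimage of the convolution~(\ref{eq-classification-hyperbolic}) admits the linear superposition
\begin{equation*}
\mathbf{R}(\varphi,\lambda)=a_1\mathbf{R}_4(\varphi,\lambda)+a_2\mathbf{R}_5(\varphi,\lambda)+a_3R^\theta\mathbf{R}_6(\varphi-\theta,\lambda)+a_4\tilde{\mathbf{R}}_4(\varphi,\lambda)+a_5R^\vartheta\tilde{\mathbf{R}}_6(\varphi-\vartheta,\lambda),
\end{equation*}
with the same coefficients $a_1,\dots,a_5,\theta,\vartheta$ as in~(\ref{eq-classification-hyperbolic}). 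The justification is that convolution of Legendre surfaces corresponds in the isotropic model to addition of the multi-valued functions whose graphs are the $\Phi_i^i$, and in the cyclographic model this becomes literal vector addition of the oriented spheres defining each cone; hence at each $\varphi$ the rulings stack additively.

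Finally, substituting the explicit expressions from Examples~\ref{ex4}--\ref{ex6} and the two tilde analogs and collecting terms, one obtains a parametrization whose translational component is a linear combination of the functions $1,\varphi,\cosh 2\varphi,\sinh 2\varphi, e^{-2\varphi}$ distributed over the four coordinates of $\mathbb{R}^4$. Absorbing constants by a Lorentz translation, rewriting $e^{\pm 2\varphi}$ in terms of $\cosh 2\varphi$ and $\sinh 2\varphi$, and applying a Lorentz transformation that preserves the ruling direction $(0,0,\cosh\varphi,\sinh\varphi)$ to align the two rotation angles $\theta,\vartheta$, the parametrization reduces to the required form~(\ref{eq-th-hyperbolic-cones}). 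The main obstacle will be bookkeeping: verifying that the five coefficients $a_1,\dots,a_5$ together with the angles $\theta,\vartheta$ generate precisely the seven-parameter family $A,\dots,G$ modulo these isometries. Since the parameter counts match and the argument parallels the elliptic case verbatim, no substantially new difficulty is expected.
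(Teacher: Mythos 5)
Your proposal follows the paper's proof exactly: the paper itself only says the proof is analogous to that of Theorem~\ref{cyclo-elliptic}, and your steps (the classification Corollary~\ref{classification-hyperbolic}, cyclographic preimages of the tilde blocks \`a la Proposition~\ref{ex-tildas}, the $\mathbb{R}^4$ analog of Proposition~\ref{rulings}, then collecting the $\varphi$, $\cosh 2\varphi$, $\sinh 2\varphi$ terms modulo Lorentz transformations) are precisely that analogy made explicit. One small correction: in the hyperbolic superposition the term should be $a_3R^{\theta}\mathbf{R}_6(\varphi,\lambda)$ without the shift $\varphi\mapsto\varphi-\theta$, since a rotation about the $z$-axis fixes each concentric circle $x^2+y^2=t$ of the top-view pencil and fixes the ruling direction $(0,0,\cosh\varphi,\sinh\varphi)$, so no reparametrization of the family parameter is needed (with the shift the lines being convolved at a fixed $\varphi$ would not be parallel).
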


\subsection{Parabolic families of cones}

\begin{definition} \label{def-parabolic} A \emph{parabolic} pencil of circles in the plane (or in a sphere) is the set of all the circles
touching a fixed circle at a fixed point.
A $1$-parametric family of cones (possibly degenerating to cylinders or lines) in space is
\emph{parabolic} if the Gaussian spherical images of the cones form a parabolic pencil of circles in the unit sphere.
\end{definition}

\begin{theorem} \label{i-classification-parabolic}
Let $\Phi^i$ be an i-Willmore surface carrying a family of i-M-circles.
Suppose that the top view of the family is a parabolic pencil of circles.
Then the surface $\Phi^i$ is i-M-equivalent to a piece of the surface
\begin{multline}
z=a_1(5y^2-x^2)x^3+a_2(3y^2-x^2)x^2+
(b_1y^2+b_2xy+b_3x^2)x+c_1y^2+c_2xy\label{eq-th2-1}\\[-10pt]
\end{multline}
for some $a_1,a_2,b_1,b_2,b_3,c_1,c_2\in\mathbb{R}$.
\end{theorem}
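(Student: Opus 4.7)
The plan is to mirror the arguments of the proofs of Theorems~\ref{i-classification-elliptic} and~\ref{i-classification-hyperbolic}. A parabolic pencil of circles consists of all circles tangent to a fixed circle at a fixed point $P$, and an inversion in the top view centered at $P$ sends it to a pencil of parallel lines. I would therefore begin by applying an i-M-transformation that sends the top view of the family of i-M-circles to the pencil of vertical lines $x=t$, where $t$ runs over a segment $J\subset\mathbb{R}$. Let $z=F(x,y)$ denote the transformed surface, with $F$ biharmonic in some region $U\subset\mathbb{R}^2$. Because i-M-transformations carry i-M-circles to i-M-circles, and an i-M-circle whose top view is the line $x=t$ is the intersection of two i-M-spheres of equal i-mean curvature and equal $y$-coefficient, the restriction of $F$ to each line $x=t$, $t\in J$, is a quadratic function of $y$. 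Hence $F$ admits the ansatz
\[
F(x,y)=A(x)y^2+B(x)y+C(x)
\]
for some smooth functions $A,B,C$.

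Next, I would substitute this ansatz into the biharmonic equation $\Delta^2F=0$ and separate powers of $y$ to obtain the ODE system
\[
A^{(4)}=0,\qquad B^{(4)}=0,\qquad C^{(4)}+4A''=0.
\]
Integrating gives $A(x)=\alpha_0+\alpha_1x+\alpha_2x^2+\alpha_3x^3$, $B(x)=\beta_0+\beta_1x+\beta_2x^2+\beta_3x^3$, and $C(x)=\gamma_0+\gamma_1x+\gamma_2x^2+\gamma_3x^3-\tfrac{\alpha_2}{3}x^4-\tfrac{\alpha_3}{5}x^5$ for some constants $\alpha_i,\beta_i,\gamma_i\in\mathbb{R}$.

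Finally, I would use the residual i-M-transformations preserving the pencil of vertical lines (translations in $x$ and $y$, plane and $z$-shifts $z\mapsto z+ax+by+d$, and the i-M-sphere shift $z\mapsto z+c(x^2+y^2)$) to eliminate the five coefficients $\beta_0,\beta_3,\gamma_0,\gamma_1,\gamma_2$ and reach formula~(\ref{eq-th2-1}). The plane-and-$z$-shift immediately eliminates $\beta_0,\gamma_0,\gamma_1$. A translation $(x,y)\mapsto(x,y+k)$ leaves $A$ invariant and acts on the $\beta_i$ by $\beta_i\mapsto 2k\alpha_i+\beta_i$; choosing $k=-\beta_3/(2\alpha_3)$ kills $\beta_3$ in the generic case $\alpha_3\ne 0$. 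A further translation $(x,y)\mapsto(x+h,y)$ with $h$ chosen as a real root of the cubic $\gamma_2+3\gamma_3h-2\alpha_2h^2-2\alpha_3h^3=0$ then kills $\gamma_2$ without recreating $\beta_3$, since $B$ is now of degree at most two. A concluding plane-and-$z$-shift restores $\beta_0=\gamma_0=\gamma_1=0$. Setting $a_1=\alpha_3/5$, $a_2=\alpha_2/3$, $b_1=\alpha_1$, $b_2=\beta_2$, $b_3=\gamma_3$, $c_1=\alpha_0$, $c_2=\beta_1$ yields~(\ref{eq-th2-1}).

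The main obstacle is the bookkeeping in this final step: verifying that the six-dimensional group of residual i-M-transformations has exactly the freedom needed to eliminate the five unwanted coefficients, and separately treating the degenerate sub-cases $\alpha_3=0$ (or $\alpha_3=\alpha_2=0$), in which the elimination of $\gamma_2$ is done by a polynomial equation of lower degree but still yields the same normal form.
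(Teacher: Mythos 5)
Your proposal follows the paper's route exactly up to the ODE step: the same i-M-transformation onto the pencil of vertical lines, the same ansatz $F=A(x)y^2+B(x)y+C(x)$, the same system $A^{(4)}=B^{(4)}=0$, $C^{(4)}+4A''=0$, and the same twelve-parameter general solution. The divergence is in the final normalization, where your write-up is in one respect more careful than the paper's and in another respect both overcomplicated and still incomplete. More careful: the paper's stated transformation $z\mapsto z-\gamma_2(x^2+y^2)-\gamma_1x-\beta_0y-\gamma_0$ kills only $\beta_0,\gamma_0,\gamma_1,\gamma_2$ and is silent about the coefficient $\beta_3$ of $x^3y$, a monomial absent from~(\ref{eq-th2-1}); you correctly observe that $\beta_3$ must also be removed and supply the $y$-translation $\beta_3\mapsto\beta_3+2k\alpha_3$ that does it when $\alpha_3\ne0$. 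But this leaves a genuine gap when $\alpha_3=0$ and $\beta_3\ne0$ (e.g.\ $F=x^3y$, which is biharmonic and linear on every vertical line, so it satisfies all the hypotheses): once $A$ has degree at most $2$, no translation, rotation by $\pi$, reflection, scaling, or addition of an i-M-sphere changes the leading coefficient of $B$ except by a nonzero factor, so your elimination mechanism --- and with it the reduction to~(\ref{eq-th2-1}) --- fails there. Your closing remark about degenerate sub-cases concerns only the elimination of $\gamma_2$, not this one, so the case $\alpha_3=0$, $\beta_3\ne0$ needs a separate argument (or an explanation of why such a surface is nevertheless i-M-equivalent to the normal form by a transformation not preserving the vertical pencil).

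The second issue is the $\gamma_2$ step. You eliminate $\gamma_2$ by an $x$-translation, which requires a real root of $\gamma_2+3\gamma_3h-2\alpha_2h^2-2\alpha_3h^3$; when $\alpha_3=\alpha_2=\gamma_3=0$ and $\gamma_2\ne0$ this polynomial is a nonzero constant and has no root, so the step breaks down precisely in the degenerate cases you defer. The paper's shift $z\mapsto z-\gamma_2(x^2+y^2)$ removes $\gamma_2x^2$ unconditionally, at the harmless cost of replacing $\alpha_0$ by $\alpha_0-\gamma_2$ (which is the free parameter $c_1$). Replacing your $x$-translation by that shift removes this complication entirely and leaves the $\beta_3$ question above as the only outstanding point --- one which, it should be said, the paper's own one-line normalization does not address either.
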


\begin{proof}[of Theorem~\ref{i-classification-parabolic}]
Perform an i-M-transformation taking the parabolic pencil of circles in the top view into the pencil of parallel lines $x=t$, where $t$ runs through a segment $J\subset\mathbb{R}$.
Denote by $z=F(x,y)$ the surface obtained from the surface $\Phi^i$ by the transformation, where $F$ is a biharmonic function defined in a region $U\subset\mathbb{R}^2$.
Since an i-M-transformation takes i-M-circles to i-M-circles it follows that the restriction of the function $F$ to (an appropriate segment of) each line $x=t$, where $t\in J$, is a quadratic function.

So $F(x,y)=a(x)y^2+b(x)y+c(x)$. Thus $\Delta^2F=a^{(4)}y^2+b^{(4)}y+c^{(4)}+4a''$. Since $\Delta^2F=0$ it follows that the coefficients of this polynomial in $y$ vanish. Hence
\begin{align*}
a(x)&=\alpha_0+\alpha_1x+\alpha_2x^2+\alpha_3x^3;\\
b(x)&=\beta_0+\beta_1x+\beta_2x^2+\beta_3x^3;\\
c(x)&=\gamma_0+\gamma_1x+\gamma_2x^2+\gamma_3x^3-\alpha_2x^4/3-\alpha_3x^5/5.
\end{align*}
One can achieve $\beta_0=\gamma_0=\gamma_1=\gamma_2=0$ by
the i-M-transformation $z\mapsto z-\gamma_2(x^2+y^2)-\gamma_1x-\beta_0y-\gamma_0$.
We get the 
required formula. 
\qed\end{proof}

\begin{table}[htbp]
\caption{Biharmonic functions whose restrictions to each line $x=t$, $t\in I$, are quadratic functions and corresponding Laguerre minimal surfaces}
\label{trans-parabolic}
\begin{tabular}{|l|r|}
\hline
Biharmonic function & Laguerre minimal surface\\
\hline
$(x\cos\theta  + y\sin\theta )^2/2$                                              &  $\mathbf{r}^{\theta}_7(u,v)$\\
$x^3$                                                &  $\mathbf{r}_8(u,v)$\\
$x^2y$                                               &  $\mathbf{r}_9(u,v)$\\
$xy^2$                                               &  $\mathbf{r}^{\pi/2}_9(u,v)$\\
$x^2(x^2-3 y^2)/2$                                   &  $\mathbf{r}_{10}(u,v)$\\
$x^3(x^2-5 y^2)$                                     &  $\mathbf{r}_{11}(u,v)$\\
$a(x^2+y^2)+bx+cy+d$                                 &  oriented sphere\\
\hline
\end{tabular}
\end{table}

\begin{figure}[htbp]
\centering
\begin{overpic}[width=.8\textwidth , angle = -90]{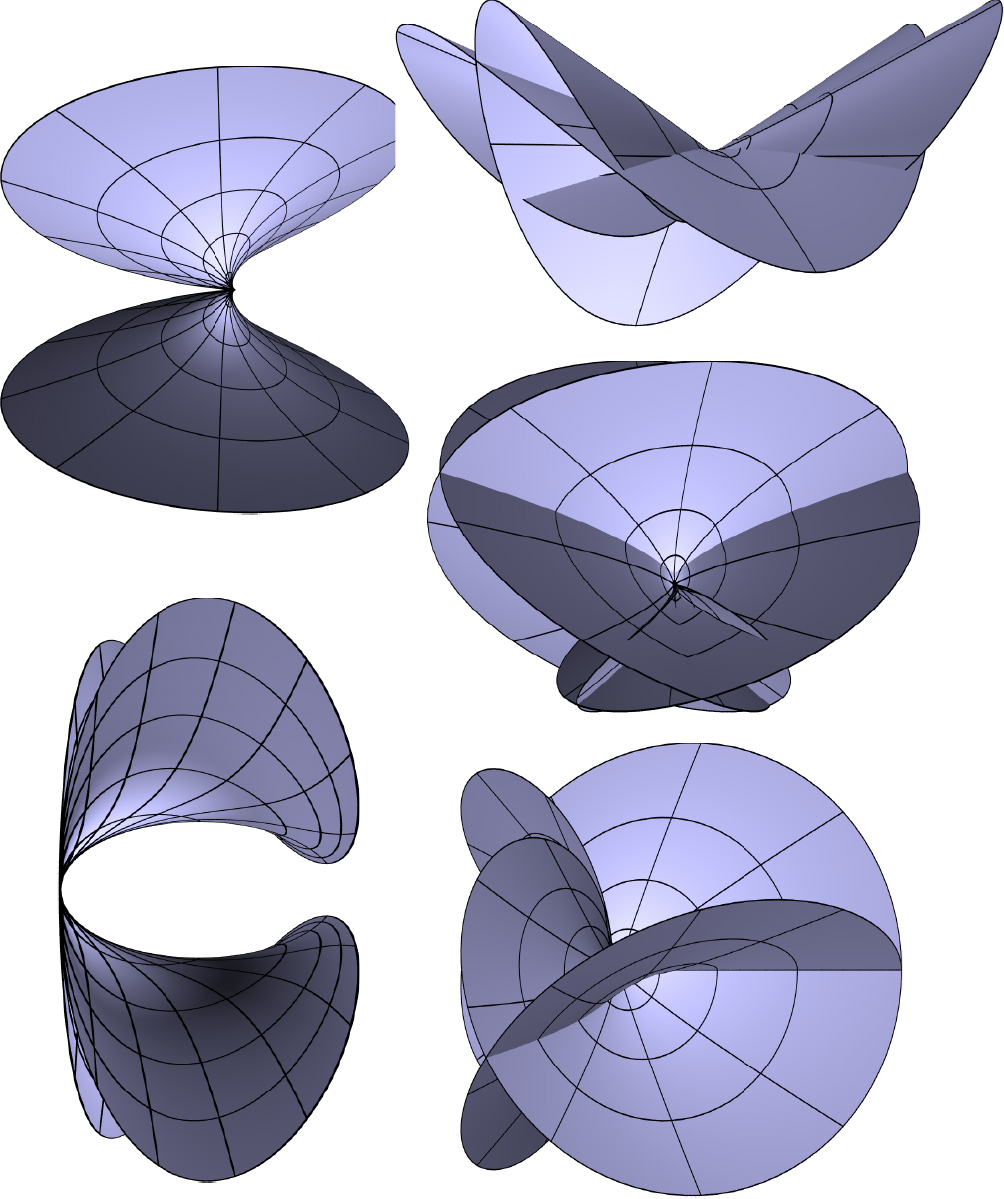}
\put(5,55){$\mathbf{r}_7$}
\put(75,55){$\mathbf{r}_8$}
\put(20,5){$\mathbf{r}_9$}
\put(55,5){$\mathbf{r}_{10}$}
\put(85,5){$\mathbf{r}_{11}$}
\end{overpic}
 \caption{Building blocks for L-minimal surfaces enveloped by a parabolic family of cones. Starting from the top left we show the surfaces $\mathbf{r}_7$, $\mathbf{r}_8$, $\mathbf{r}_{11}$,
 $\mathbf{r}_{10}$ and $\mathbf{r}_{9}$ in clockwise direction. For details refer
 to Examples \ref{ex7}--\ref{ex11}.}
 \label{figure-parabolic}
\end{figure}

An L-minimal surface enveloped by a parabolic family of cones
is obtained from the surface~(\ref{eq-th2-1}) by transformation~(\ref{eq-cor1}).
Let us give some typical examples obtained from the graphs of the functions in the left column of Table~\ref{trans-parabolic}; see also Figure~\ref{figure-parabolic}.
These examples are building blocks forming all the surfaces in question.

\begin{example} \label{ex7} The first example is the
parabolic horn cyclide $\left(y^2+z^2\right)(1-z)=x^2z$. In parametric form it can be written as:
\begin{multline}\label{eq7}
\mathbf{r}_7(u,v)=\frac{1}{1+u^2+v^2}\left(\,-u-uv^2,\,u^2 v,\,u^2\, \right).\\[-7pt]
\end{multline}
Its cyclographic preimage can be parametrized as
\begin{multline}
\mathbf{R}_7(\varphi,\lambda)
= \left(\, 0,\, 0 ,\, -\varphi^2,\,\varphi^2\, \right)\!/2
\,+\, \lambda \left(1 ,\, 0,\, -\varphi,\, \varphi\, \right) \\[-7pt]
\end{multline}
%
%
%
%
The surface $\mathbf{r}_7$ has the following property: there is a $2$-parametric family of cones touching the cyclide along certain curves. In particular, there are both parabolic and elliptic $1$-parametric families of cones touching the surface along curves.
One of the elliptic families of cones can be parametrized as
\begin{multline}
\left(\, 0,\, 0 ,\, \cos^2\varphi,\, \cos^2\varphi \, \right)\!/2\,+\,\lambda \left(\sin\varphi ,\, \cos\varphi,\, 0,\, 0\, \right)\\[-7pt]
\end{multline}
\end{example}

\begin{example} \label{ex8} The next building block is
given by the algebraic surface of degree $6$ with implicit
equation
$$(x^2 + y^2 + z^2) z^4 - 2 (8 x^2 + 9 y^2 + 9 z^2) x z^2 - 27 (y^2 + z^2)^2=0.$$
In parametric form it can be written as:
\begin{multline}\label{eq8}
\mathbf{r}_8(u,v)=\frac{1}{1+u^2+v^2}\left(\,
u^4-3 u^2v^2-3u^2, \, 4 u^3 v,\, 4 u^3\,\right).\\[-7pt]
\end{multline}
Its cyclographic preimage can be parametrized as
\begin{multline}
\mathbf{R}_8(\varphi,\lambda)
= \left(\, 0,\, 0 ,\, -\varphi^3,\, \varphi^3
\, \right)
+ \lambda \left(1 ,\, 0,\, -\varphi,\, \varphi\, \right) \\[-7pt]
\end{multline}
\end{example}

\begin{example} \label{ex9} Another example is the algebraic surface of degree $8$,
given implicitly by:
$$z^2(y^2+z^2)\left(z^2-4y-4\right)^2 +
  x^2 \left(64 y^3 - 24 (y-3) y z^2 - 6 (y+6) z^4 + z^6\right)-27 x^4 z^2 =0.$$
In parametric form:
\begin{multline}\label{eq9}
\mathbf{r}_9(u,v)=\frac{1}{1+u^2+v^2}\left(\,
2 uv(u^2-v^2-1), \, u^2 (3v^2-u^2-1), \,4 u^2 v
\,\right).\\[-7pt]
\end{multline}
Its cyclographic preimage can be parametrized as
\begin{multline}
\mathbf{R}_9(\varphi,\lambda)
= \left(\, 0,\, -\varphi^2 ,\, 0,\, 0
\, \right)
+ \lambda \left(1 ,\, 0,\, -\varphi,\, \varphi\, \right) \\[-7pt]
\end{multline}
This surface has the following property: there are two $1$-parametric families of cones touching the surface along certain curves.
The other family can be parametrized as
\begin{multline}
\left(\, 0,\, 0 ,\, \varphi+\varphi^3,\, \varphi-\varphi^3
\, \right)
+ \lambda \left(0 ,\, 1,\, -\varphi,\, \varphi\, \right). \\[-7pt]
\end{multline}
\end{example}

Finally, we have the following two ``monsters''. We do not write their implicit equations because this would take several pages.

\begin{example} \label{ex10}First the algebraic surface of degree not greater than $14$
described by
\begin{multline}\label{eq10}
\mathbf{r}_{10}(u,v)=\frac{1}{1+u^2+v^2}
\left(\,
\text{\begin{tabular}{c}
$u^5 - 2 u^3 (1 + 4 v^2) + 3 u (v^2 + v^4)$\\
$3 u^2 v (1 + 2 u^2 -   2 v^2)$\\
$3 u^2(u^2 - 3 v^2)$
\end{tabular}}
\,\right).\\[-7pt]
\end{multline}
Its cyclographic preimage can be parametrized as
\begin{multline}
\mathbf{R}_{10}(\varphi,\lambda)
= \left(\, 0,\, 0 ,\, -3\varphi^2 - 4\varphi^4,\, -3\varphi^2 + 4\varphi^4
\, \right)\!/2 \,+\, \lambda \left(1 ,\, 0,\, -\varphi,\, \varphi\, \right). \\[-7pt]
\end{multline}
\end{example}

\begin{example} \label{ex11} The second monster is the
algebraic surface of degree not greater than $18$ with
parametrization
\begin{multline}
\mathbf{r}_{11}(u,v)=\frac{1}{1+u^2+v^2}
\left(\,
\text{\begin{tabular}{c}
$3 u^6 - 5 u^4 (1 + 6 v^2) + 15 u^2 (v^2 + v^4)$\\
$2 u^3 v (5 +   9 u^2 - 15 v^2)$\\
$8 u^3 (u^2 - 5  v^2)$
\end{tabular}}
\,\right).\\[-7pt]
\end{multline}
Its cyclographic preimage can be parametrized as
\begin{multline}
\mathbf{R}_{11}(\varphi,\lambda)
= \left(\, 0,\, 0 ,\, -5\varphi^3 - 6\varphi^5,\, -5\varphi^3 + 6\varphi^5
\, \right)
+ \lambda \left(1 ,\, 0,\, -\varphi,\, \varphi\, \right). \\[-7pt]
\end{multline}

\end{example}

An L-minimal surface enveloped by a parabolic family of cones can be obtained from Examples~\ref{ex7}--\ref{ex11} by performing rotations and taking special convolution surfaces; see Figure~\ref{figure-parabolic-general}:


\begin{corollary}[Classification for parabolic type]\label{classification-parabolic}
A Laguerre minimal surface
enveloped by a parabolic family of cones 
is Laguerre equivalent to a piece of the surface
\begin{multline} \label{eq-classification-parabolic}
\mathbf{r}(u,v)=a_1\mathbf{r}^{\theta}_7(u,v)+a_2\mathbf{r}_8(u,v)
+a_3\mathbf{r}_9(u,v)+a_4\mathbf{r}^{\pi/2}_9(u,v)+a_5\mathbf{r}_{10}(u,v)+a_6\mathbf{r}_{11}(u,v)\\[-10pt]
\end{multline}
for some $a_1,a_2,a_3,a_4,a_5,a_6,\theta\in\mathbb{R}$.
Conversely, an immersed piece of surface~\textup{(\ref{eq-classification-parabolic})} is Laguerre minimal and is enveloped by a parabolic family of cones.
\end{corollary}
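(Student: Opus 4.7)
The plan is to mirror the strategy used for the elliptic and hyperbolic cases (Corollaries~\ref{classification-elliptic} and~\ref{classification-hyperbolic}) and reduce everything to Theorem~\ref{i-classification-parabolic} together with Table~\ref{trans-parabolic}. For the direct implication, let $\Phi$ be an L-minimal surface enveloped by a parabolic family of cones. By Proposition~\ref{observation}, the isotropic image $\Phi^i$ carries a family of i-M-circles, and by Definition~\ref{def-parabolic} (translated to the isotropic model), the top view of this family is a parabolic pencil of circles. Theorem~\ref{i-classification-parabolic} then identifies $\Phi^i$, up to an i-M-transformation, with the graph of the biharmonic function on the right-hand side of~(\ref{eq-th2-1}).

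The next step is to recognize the right-hand side of~(\ref{eq-th2-1}) as a linear combination of the biharmonic functions in the left column of Table~\ref{trans-parabolic}, modulo a summand of the form $a(x^2+y^2)+bx+cy+d$. Concretely, $a_1(5y^2-x^2)x^3$ and $a_2(3y^2-x^2)x^2$ are scalar multiples of $-x^3(x^2-5y^2)$ and $-x^2(x^2-3y^2)/2$; the cubic part $(b_1y^2+b_2xy+b_3x^2)x$ is a linear combination of $xy^2$, $x^2y$, $x^3$; and the pure quadratic $c_1y^2+c_2xy$, after the substitution $\tfrac{a}{2}(x\cos\theta+y\sin\theta)^2 = \tfrac{a}{2}(x^2\cos^2\theta+2xy\cos\theta\sin\theta+y^2\sin^2\theta)$, can be written as $a(x\cos\theta+y\sin\theta)^2/2$ plus a constant multiple of $(x^2+y^2)$ for suitable $a,\theta$. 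The unwanted $(x^2+y^2)$-piece and any leftover linear/constant piece are absorbed into an i-M-transformation $z\mapsto z+a(x^2+y^2)+bx+cy+d$, which by Table~\ref{isotropic-trans} corresponds to an L-transformation and is therefore harmless for the Laguerre-equivalence claim.

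Now I apply Proposition~\ref{cor1}: the parametrization~(\ref{eq-cor1}) of $\mathbf{r}(x,y)$ in terms of $F$ is $\mathbb{R}$-linear in $F$, so a linear combination of the biharmonic functions in the left column of Table~\ref{trans-parabolic} produces exactly the corresponding linear combination (i.e.\ convolution) of the Legendre surfaces in the right column. Reading off coefficients, this is precisely~(\ref{eq-classification-parabolic}), with the parameter $\theta$ coming from the diagonalization of $c_1y^2+c_2xy$. For the converse, each building block $\mathbf{r}_7^{\theta},\mathbf{r}_8,\mathbf{r}_9,\mathbf{r}_9^{\pi/2},\mathbf{r}_{10},\mathbf{r}_{11}$ is L-minimal (its isotropic image is the graph of a polynomial, hence biharmonic, function, and Theorem~\ref{pgm} applies), and L-minimality is preserved under convolution by \cite[Corollary~3]{PGM:2009}. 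Moreover, each of these biharmonic functions has the property that its restriction to every vertical line $x=t$ is quadratic in $y$, so by Proposition~\ref{observation} each isotropic image carries the parabolic pencil of i-M-circles sitting over the pencil of lines $x=t$; this property is preserved by linear combinations, and hence any immersed piece of~(\ref{eq-classification-parabolic}) is enveloped by a parabolic family of cones.

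The only step requiring care is the bookkeeping in the second paragraph: matching the seven-parameter family~(\ref{eq-th2-1}) to the seven-parameter family~(\ref{eq-classification-parabolic}) (with $\theta$ counted as one of the seven), and verifying that the $(x^2+y^2)+\mathrm{linear}+\mathrm{const}$ residue really corresponds to an L-transformation rather than to a new independent building block. This is routine but it is where an error would most easily creep in; once it is checked, the rest of the argument is a direct transcription of Theorem~\ref{i-classification-parabolic} through the linear map~(\ref{eq-cor1}) and Table~\ref{trans-parabolic}.
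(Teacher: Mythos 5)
Your proposal is correct and follows essentially the same route as the paper's own proof: reduce via Proposition~\ref{observation} and Theorem~\ref{i-classification-parabolic} to formula~(\ref{eq-th2-1}), match its terms against Table~\ref{trans-parabolic}, absorb the $a(x^2+y^2)+bx+cy+d$ residue into an i-M-transformation, and push everything through the linearity of~(\ref{eq-cor1}); the converse likewise matches. The explicit bookkeeping you carry out in the second paragraph (in particular writing $c_1y^2+c_2xy$ as $a(x\cos\theta+y\sin\theta)^2/2$ plus a multiple of $x^2+y^2$) is left implicit in the paper but is exactly the verification it relies on.
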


\begin{figure}[htbp]
\centering
\includegraphics[width=\textwidth]{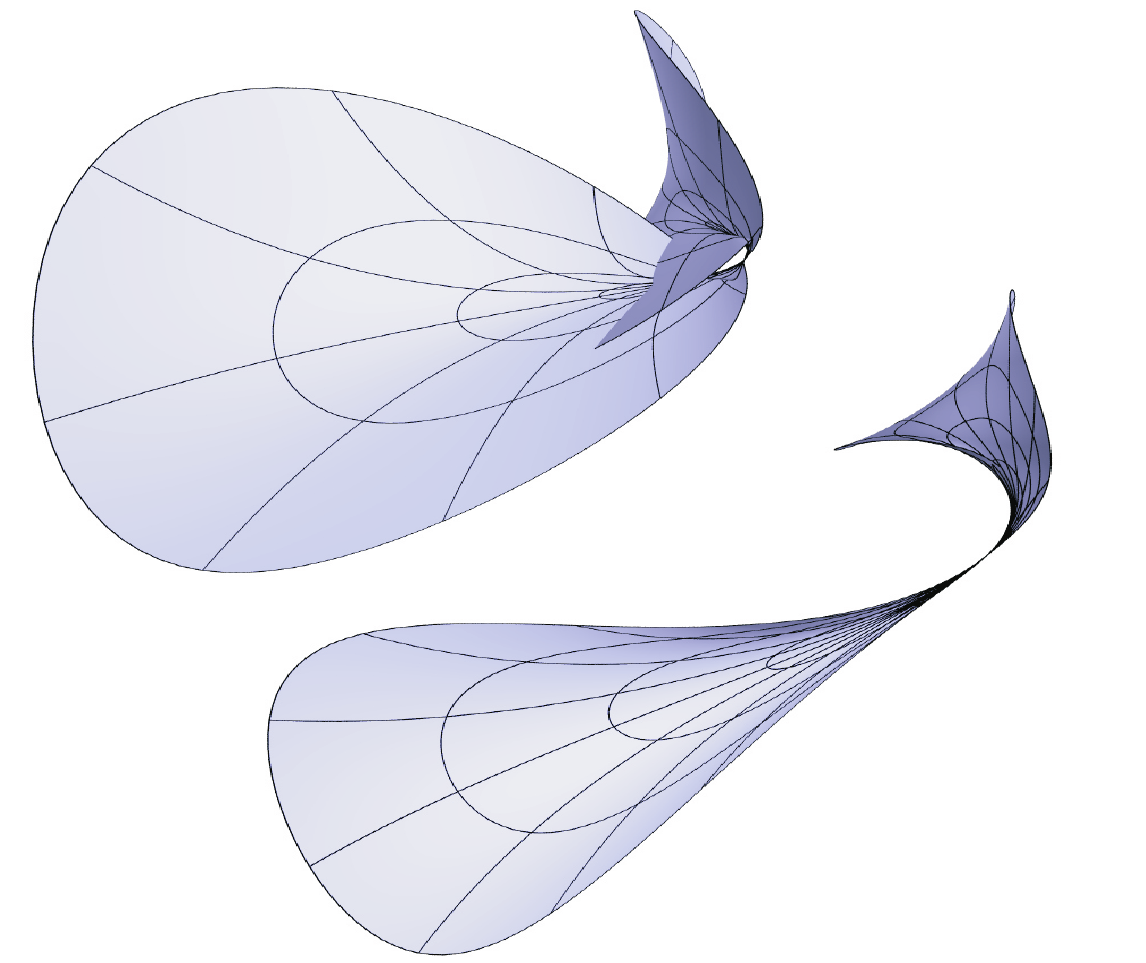}
 \caption{Two general L-minimal surfaces enveloped by parabolic families of cones. For details
 refer to Definition~\ref{def-parabolic} and Corollary~\ref{classification-parabolic}.}
 \label{figure-parabolic-general}
\end{figure}

\begin{proof}[of Corollary~\ref{classification-parabolic}]
Let us prove the direct implication.
Let $\Phi$ be an L-minimal surface enveloped by a parabolic family of cones.
Then the surface $\Phi^i$ carries a family of i-M-circles such that the top view of the family is a parabolic pencil. By Theorem~\ref{i-classification-parabolic}
it follows that the surface $\Phi^i$ is i-M-equivalent to surface~(\ref{eq-th2-1}).

Right-hand side of formula~(\ref{eq-th2-1}) is a linear combination in the expressions in the left column of Table~\ref{trans-parabolic}.
Performing an i-M-transformation $z\mapsto z+a(x^2+y^2)+bx+cy+d$ one can eliminate the last expression from the linear combination.
By Propositions~\ref{cor1} and Table~\ref{isotropic-trans} transformation~(\ref{eq-cor1}) takes the functions in the left column of Table~\ref{trans-parabolic} to the surfaces in the right column. Since the transformation~(\ref{eq-cor1}) is linear in $F$ the direct implication follows.

The converse implication follows from Propositions~\ref{cor1},~\ref{observation}, Theorem~\ref{pgm}, and Table~\ref{trans-parabolic}.
\qed\end{proof}

Finally we describe the cyclographic preimage (the proof is analogous to the proof of Theorem~\ref{cyclo-elliptic}).

\begin{theorem}
	The cyclographic preimage of a Laguerre minimal surface enveloped by
	a parabolic family of cones is up to Lorentz transformations a piece of the surface
	\begin{multline}
	\mathbf{R}(\varphi,\lambda)=
	\left(\text{\begin{tabular}{c}
	$0$\\
	$A\varphi+B\varphi^2$\\
        $C\varphi+D\varphi^2+E\varphi^3+F(3\varphi^2+4\varphi^4)+G(5\varphi^3+6\varphi^5)$\\
	 $C\varphi-D\varphi^2-E\varphi^3+F(3\varphi^2-4\varphi^4)+G(5\varphi^3-6\varphi^5)$
	\end{tabular}}\right)
	+
	\lambda\left(\text{\begin{tabular}{c}
	$1$\\
	$0$\\
	$-\varphi$\\
	$\varphi$
	\end{tabular}}\right)
	\\[-7pt]
	\end{multline}
	for some $A,B,C,D,E,F,G\in\mathbb{R}$.
\end{theorem}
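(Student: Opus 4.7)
The plan is to imitate the proof of Theorem~\ref{cyclo-elliptic} in the parabolic setting. By Corollary~\ref{classification-parabolic}, every L-minimal surface enveloped by a parabolic family of cones is, up to Laguerre equivalence, a convolution
\[
\mathbf{r} = a_1 \mathbf{r}^{\theta}_7 + a_2 \mathbf{r}_8 + a_3 \mathbf{r}_9 + a_4 \mathbf{r}^{\pi/2}_9 + a_5 \mathbf{r}_{10} + a_6 \mathbf{r}_{11}.
\]
Since Laguerre transformations correspond to Lorentz transformations in the cyclographic model, it suffices to compute the cyclographic preimage of this particular convolution.

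The key observation, the parabolic analogue of Proposition~\ref{rulings}, is that the cyclographic preimage commutes with the convolution of surfaces that share a common family of cone axis directions. A cone in $\mathbb{R}^3$ corresponds to a line in $\mathbb{R}^4$, and the convolution of two cones with a common axis direction is represented in the cyclographic model by the componentwise sum of the corresponding lines, which preserves the direction. Applied parameter by parameter, this yields
\[
\mathbf{R}(\varphi, \lambda) = a_1 \mathbf{R}^{\theta}_7 + a_2 \mathbf{R}_8 + a_3 \mathbf{R}_9 + a_4 \mathbf{R}^{\pi/2}_9 + a_5 \mathbf{R}_{10} + a_6 \mathbf{R}_{11},
\]
where each $\mathbf{R}_k$ is the ruled cyclographic preimage written down in Examples~\ref{ex7}--\ref{ex11}.

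Two of these preimages, $\mathbf{R}^{\theta}_7$ and $\mathbf{R}^{\pi/2}_9$, are not stated explicitly and must be derived so that all six summands share the common ruling direction $(1,0,-\varphi,\varphi)$. For $\mathbf{R}^{\theta}_7$ one applies the rotation $R^{\theta}$ in $(x,y)$ to $\mathbf{R}_7$ and shifts the family parameter by $\varphi \mapsto \varphi - \theta$, exactly as in the elliptic proof. For $\mathbf{R}^{\pi/2}_9$ one applies $R^{\pi/2}$ to the \emph{second} parabolic family of cones on $\mathbf{r}_9$ listed at the end of Example~\ref{ex9}; the rotated family has ruling direction $(-1,0,-\varphi,\varphi)$, which is restored to the standard one by the reparametrization $\varphi \mapsto -\varphi$, $\lambda \mapsto -\lambda$. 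After these preparations, the sum of the six ruled surfaces is a parametrized ruled $2$-surface in $\mathbb{R}^4$ with the common ruling $\lambda(1,0,-\varphi,\varphi)$ (after absorbing a constant scalar into $\lambda$) whose base curve has coordinates polynomial in $\varphi$ of degree at most $5$; the first coordinate vanishes identically because every $\mathbf{R}_k$ already has zero first coordinate.

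The final step is to match the resulting polynomial entries against the seven parameters $A,\dots,G$ in the statement: $\mathbf{R}_{10}$ contributes the symmetric pair $F(3\varphi^2 \pm 4\varphi^4)$, $\mathbf{R}_{11}$ the pair $G(5\varphi^3 \pm 6\varphi^5)$, $\mathbf{R}_8$ the pure $\pm E\varphi^3$ terms, and the linear combination of $\mathbf{R}^{\theta}_7$, $\mathbf{R}_9$, and $\mathbf{R}^{\pi/2}_9$ produces $A\varphi + B\varphi^2$ in the second coordinate and $C\varphi \pm D\varphi^2$ in the third and fourth. The main obstacle is precisely this bookkeeping: deriving $\mathbf{R}^{\pi/2}_9$ in a form compatible with the standard parabolic ruling (which hinges on the fact recorded in Example~\ref{ex9} that $\mathbf{r}_9$ is enveloped by two distinct parabolic families of cones) and then verifying that the resulting linear map from $(a_1,\dots,a_6,\theta)$ to $(A,\dots,G)$ is surjective, which amounts to inspecting the span of seven explicit polynomial vectors.
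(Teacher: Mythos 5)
Your overall strategy is exactly the one the paper intends (its own ``proof'' is just the remark that the argument is analogous to Theorem~\ref{cyclo-elliptic}): decompose the surface via Corollary~\ref{classification-parabolic}, observe that the cyclographic preimage of a convolution of surfaces enveloped by cone families with matching ruling directions $(1,0,-\varphi,\varphi)$ is the componentwise sum of the preimages (the parabolic analogue of Proposition~\ref{rulings}), substitute the explicit $\mathbf{R}_7,\dots,\mathbf{R}_{11}$ from Examples~\ref{ex7}--\ref{ex11}, and match coefficients. Your treatment of $\mathbf{R}^{\pi/2}_9$ via the second cone family of Example~\ref{ex9} is correct (up to a sign absorbed into $a_4$).

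There is, however, one step that fails as written: obtaining $\mathbf{R}^{\theta}_7$ by ``rotating and shifting the family parameter, exactly as in the elliptic proof.'' That trick works in the elliptic case because $R^\theta$ permutes the pencil of lines through the origin; it does \emph{not} preserve the parabolic pencil of parallel lines $x=t$. Concretely, $R^\theta\mathbf{R}_7(\varphi-\theta,\lambda)$ has ruling direction $(\cos\theta,\sin\theta,\theta-\varphi,\varphi-\theta)$, which no reparametrization of $\varphi$ brings to the form $(1,0,-\varphi,\varphi)$, so the six summands would not share rulings and the convolution argument collapses for that term. The repair uses the fact recorded in Example~\ref{ex7} that $\mathbf{r}_7$ is a parabolic cyclide carrying a $2$-parameter family of cones: the restriction of $(x\cos\theta+y\sin\theta)^2/2$ to each line $x=t$ is still quadratic, so $\mathbf{r}^\theta_7$ carries the standard parabolic family, and computing its cones directly gives $\mathbf{R}^\theta_7(\varphi,\lambda)=\bigl(0,\,-\tfrac{\varphi}{2}\sin2\theta,\,\tfrac{\sin^2\theta}{2}-\tfrac{\varphi^2}{2}\cos2\theta,\,\tfrac{\sin^2\theta}{2}+\tfrac{\varphi^2}{2}\cos2\theta\bigr)+\lambda(1,0,-\varphi,\varphi)$. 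This is not a cosmetic point: this summand is the sole source of the $A\varphi$ term in the second coordinate and of the antisymmetric $\pm D\varphi^2$ terms, with $(A,D)=-\tfrac{a_1}{2}(\sin2\theta,\cos2\theta)$, so without it the stated seven-parameter form cannot be reached. Finally, note that for the theorem as stated you only need the map $(a_1,\dots,a_6,\theta)\mapsto(A,\dots,G)$ to land \emph{inside} the stated family (surjectivity would be needed for a converse, which this theorem does not claim).
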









\subsection{Open problems}

\begin{conjecture} A surface such that there is a $2$-parametric family of cones of revolution touching the surface along certain curves distinct from directrices is either a sphere or a parabolic cyclide.
\end{conjecture}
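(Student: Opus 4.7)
\bigskip
\noindent\emph{Plan of proof.} I would work in the isotropic model of Laguerre geometry. By Proposition~\ref{observation}, the hypothesis is equivalent to the statement that $\Phi^i$ contains a $2$-parametric family of i-M-circles. Writing $\Phi^i$ locally as a graph $z=F(x,y)$, the top views of these i-M-circles form a $2$-parametric family of circles (and possibly lines) in the plane, with the property that on each circle $C$ of the family $F\left|_C\right.$ is the restriction of an i-M-sphere $(a/2)(x^2+y^2)+bx+cy+d$. Using $x^2+y^2=2x_0x+2y_0y+r^2-x_0^2-y_0^2$ on $C$, this condition is equivalent to $F\left|_C\right.$ being the restriction of a linear function of $(x,y)$. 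Thus the task reduces to classifying smooth functions $F$ that are linear in restriction to every circle of a $2$-parametric family in the plane, and then recognizing when the result is i-M-equivalent to an i-paraboloid.

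The plan is to exploit the $2$-parametric nature. By a dimension count, through a generic point $p\in\Phi^i$ passes a $1$-parametric subfamily of i-M-circles from the given family, producing a rich supply of tangent directions at $p$. For each $1$-parametric subfamily that is not a pencil, if $F$ were biharmonic, Pencil Theorem~\ref{th1} would force the exceptional form~(\ref{eq-th1}). In the general case, I would differentiate the linearity condition along the two family parameters to derive a rigid system of partial differential equations for $F$, and argue that its only solutions (up to i-M-transformations of the form listed in Table~\ref{isotropic-trans}) are quadratic polynomials in $(x,y)$. Once $F$ is shown to be quadratic, $\Phi^i$ is an i-paraboloid and Table~\ref{isotropic} gives that $\Phi$ is either a sphere or a parabolic cyclide.

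The main obstacle is twofold. First, Pencil Theorem~\ref{th1} assumes biharmonicity, whereas the conjecture concerns arbitrary smooth surfaces; so one needs either a version of the Pencil Theorem valid for general real-analytic $F$, or an argument showing that a genuinely $2$-parametric supply of linear-restriction circles already forces enough algebraic regularity. Second, the exceptional function~(\ref{eq-th1}) admits a $2$-parametric family of circles on which its restriction is linear yet is not a quadratic polynomial; therefore the hypothesis ``distinct from directrices'' must be used essentially. The most delicate step will be to identify the curves on the corresponding L-minimal surface arising from~(\ref{eq-th1}) as precisely the directrices excluded by the conjecture, and then to rule out all remaining non-quadratic possibilities by the PDE analysis above.
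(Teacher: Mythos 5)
The statement you are trying to prove is not proved in the paper at all: it appears in \S4.4 under the heading ``Open problems'' and is explicitly labelled a \emph{conjecture}. There is therefore no proof of the authors' to compare yours against, and your submission should be judged as a research plan for an open question rather than as a reconstruction of a known argument. Your translation of the hypothesis into the isotropic model is sound --- Proposition~\ref{observation} does convert the $2$-parametric family of cones into a $2$-parametric family of i-M-circles on $\Phi^i$, and the reduction of ``$F$ restricted to $C$ agrees with an i-M-sphere'' to ``$F|_C$ is linear'' via $x^2+y^2=2x_0x+2y_0y+r^2-x_0^2-y_0^2$ is exactly the reduction used throughout \S3. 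The concluding step (quadratic $F$ gives an i-paraboloid, hence a sphere or parabolic cyclide by Table~\ref{isotropic}) is also correct.

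However, the core of your plan is a genuine gap, and you have in fact named it yourself without resolving it. Every tool the paper develops for this kind of classification --- the Pencil Theorem~\ref{th1}, the representation $F=su_1+u_2$ of Proposition~\ref{cl1}, the arc extension Proposition~\ref{cl2}, and the double symmetry principle of Lemma~\ref{cl4} --- rests essentially on $F$ being biharmonic, whereas the conjecture concerns an arbitrary surface. The paper's second Remark after Corollary~\ref{cor-catalan} exhibits the real-analytic, non-biharmonic function $F(x,y)=\sqrt{(x^2+y^2)^2-x^2+1}$ whose restriction to every circle of the family $x^2+y^2-tx-\sqrt{t^2-1}=0$ is linear; this shows that the $1$-parametric analysis you propose to bootstrap from simply fails without biharmonicity, so the ``rigid system of PDEs'' you invoke is the entire content of the problem and is never derived. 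Likewise, the role of the clause ``distinct from directrices'' is left as an intention rather than an argument: the Pl\"ucker conoid $\mathbf{r}_3$ (Example~\ref{ex3}) carries a $2$-parametric family of i-M-circles yet is neither a sphere nor a parabolic cyclide, so your proof must show that for it (and for every surface arising from the exceptional function~(\ref{eq-th1})) all touching curves of the cones are directrices --- a step you only announce as ``the most delicate'' without carrying it out. As it stands, the proposal identifies the right framework and the right obstacles but proves nothing beyond what the paper already establishes.
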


\begin{problem} Describe all surfaces such that there are two $1$-parametric families of cones of revolution touching the surface along curves.
\end{problem}

\begin{problem} Describe all Willmore surfaces such that there is a $1$-parametric family of circles lying in the surface.
\end{problem}

\section*{Acknowledgements}
The authors are grateful to S.~Ivanov for useful discussions.
M.~Skopenkov was supported in part by M\"obius Contest Foundation for Young Scientists and the Euler Foundation. H.~Pottmann and P.~Grohs are partly supported by the Austrian Science Fund (FWF) under grant S92.

\bibliographystyle{amsplain}
\bibliography{laguerre}

\end{document}